\documentclass[12pt,a4paper,reqno,oneside]{amsart}
\usepackage{t1enc}
\usepackage{times}
\usepackage{amssymb}
\usepackage[mathscr]{euscript}
\usepackage{graphicx}
\usepackage[pdftex]{color} 

\textwidth 15cm
\oddsidemargin 0cm

\newtheorem{thm}{Theorem}[section]
\newtheorem{prop}[thm]{Proposition}
\newtheorem{lem}[thm]{Lemma}
\newtheorem{cor}[thm]{Corollary}

\theoremstyle{remark}
\newtheorem{rem}[thm]{Remark}

\theoremstyle{definition}
\newtheorem{defn}[thm]{Definition}

\renewcommand{\phi}{\varphi}
\newcommand{\tr}{\mathrm{Tr}}
\newcommand{\E}{\mathrm{E}}
\newcommand{\<}{\langle}
\renewcommand{\>}{\rangle}

\newcommand{\praum}{(\Omega,\mathfrak A,P)}
\newcommand{\Log}{\mathrm{Log}}
\newcommand{\Cov}{\mathrm{Cov}}

\begin{document}

\title{Subordination of Hilbert space valued L\'evy processes}
\date{\today}

\author[Benth]{Fred Espen Benth}
\address[Fred Espen Benth]{\\
Centre of Mathematics for Applications \\
University of Oslo\\
P.O. Box 1053, Blindern\\
N--0316 Oslo, Norway}
\email[]{fredb\@@math.uio.no}
\urladdr{http://folk.uio.no/fredb/}
\author[Kr\"uhner]{Paul Kr\"uhner}
\address[Paul Kr\"uhner]{\\
Department of Mathematics \\
University of Oslo\\
P.O. Box 1053, Blindern\\
N--0316 Oslo, Norway}
\email[]{paulkru\@@math.uio.no}

\keywords{L\'evy processes, multivariate subordination, infinite variate normal inverse Gaussian process.}

\thanks{This paper has been developed under financial support of the project "Managing Weather Risk in
Electricity Markets" (MAWREM), funded by the RENERGI-program of the Norwegian Research Council.} 
\thanks{The authors want to express their thanks to Prof. Ole Barndorff-Nielsen for fruitful comments.}

\begin{abstract}
We generalise multivariate subordination of L\'evy processes as introduced by Barndorff-Nielsen, Pedersen, and 
Sato~\cite{barndorff.et.al.01} to Hilbert space valued L\'evy processes. 
The processes are explicitly characterised and conditions for integrability and martingale properties are derived under various 
assumptions of the L\'evy process and subordinator.  As an application of our theory we construct explicitly some    
Hilbert space valued versions of L\'evy processes which are popular in the univariate and multivariate case. In particular,  
we define a normal inverse Gaussian L\'evy process in Hilbert space as a subordination of a Hilbert space valued Wiener process
by an inverse Gaussian L\'evy process. The resulting process has the property that at each time all its finite dimensional projections are multivariate normal inverse Gaussian distributed as introduced in Rydberg~\cite{rydberg.97}. 
\end{abstract}

\maketitle


\section{introduction}
Subordination, which was first introduced by Bochner~\cite{bochner.49}, has become a widely used tool to construct new Markov processes or $C_0$-semigroups. Barndorff-Nielsen, Pedersen and Sato~\cite{barndorff.et.al.01} extended this approach to multivariate subordination of L\'evy processes, i.e.\ subordination of $d$ independent L\'evy processes $L_1,\dots,L_d$ with $d$ possibly dependent subordinators $\Theta_1,\dots,\Theta_d$. They proved that the resulting process $X(t):=(L_1(\Theta_1(t)),\dots,L_d(\Theta_d(t)))$ is again a L\'evy process and its characteristics as well as its L\'evy exponent can be expressed easily in terms of properties of $L$ and $\Theta$. In the recent paper of Mendoza-Arriaga and Linetsky~\cite{Mendoza.Linetsky.12} multivariate subordination has been generalised to Markov processes with locally compact state spaces. Baeumer, Kov\'acs and Meerschaert~\cite{baeumer.et.al.08} treated multivariate subordination from an analytical point of view. 

Peszat and Zabzcyk~\cite[page 62]{peszat.zabczyk.07} indicate that the usual subordination procedure can be used to generate new
Hilbert space valued L\'evy processes. We follow their suggestion, and introduce multivariate subordination of Hilbert space 
valued L\'evy processes. In particular, we subordinate a cylindrical Brownian motion with an inverse Gaussian process which generalises subordination of real valued Brownian motions with the same subordinator. The latter subordinated process is a so-called normal inverse Gaussian L\'evy process, while the first an infinite dimensional generalization of it. As it turns out, projections of this 
Hilbert space valued normal inverse Gaussian L\'evy process to finite dimensional subspaces become multivariate
normal inverse Gaussian distributed L\'evy processes (cf. \O igard and Hanssen~\cite{oigard.hanssen.02} for 
definition of the multivariate normal inverse Gaussian distribution). We also introduce $\alpha$-stable and 
variance Gamma processes in infinite dimensions. Hilbert space valued L\'evy processes can be applied
to modeling of the spatio-temporal dynamics of weather variables like wind and temperature, and the evolution of futures prices in
energy markets or forward rates in fixed-income markets. Other areas of application includes quantum physics and turbulence.

The subordinated L\'evy processes can be completely characterised by the characteristics of the L\'evy process and subordinator. Moreover,
we analyse in detail the integrability properties of the Hilbert space valued subordinated L\'evy process. Finite first and second moments
of these infinite dimensional processes can be shown to exists under various mild conditions on the L\'evy process and/or the 
subordinator process. We derive several different conditions under which (square-)integrability holds.

This paper is arranged as follows. In the second section we introduce multivariate subordination of Hilbert space valued L\'evy processes and give formulas for the characteristic function and the characteristics of the subordinated process. In the third section we characterise the second order moment structure and characterise martingale property of the subordinated process. In the fourth section Hilbert space valued normal inverse Gaussian processes (and other Hilbert space valued L\'evy processes) are introduced and we apply the results of the previous sections to them.

\subsection{Mathematical preliminaries}
$\mathbb R$, resp.\ $\mathbb C$, denotes the real, resp.\ the complex number, and $\mathbb R_+:=[0,\infty)$ (resp.\ $\mathbb R_-:=(-\infty,0]$) the non-negative (resp.\ non-positive) real numbers. $\praum$ will always denote a probability space. If not otherwise stated, we will allways assume that our stochastic processes have c{\`a}dl{\`a}g paths and work with the truncation function $\chi(x):=x1_{\{\vert x\vert\leq 1\}}$. 

Throughout this article let $d\in\mathbb N$, $(H_j,\<\cdot\vert\cdot\>_j)$ be separable Hilbert space and $L_j$ be an $H_j$-valued L\'evy process for $j=1,\dots,d$ such that $L_1,\dots,L_d$ are independent, cf.\ Peszat and Zabczyk~\cite[Section 4]{peszat.zabczyk.07}. Let $(b_j,Q_j,\nu_j)$ be the characteristics of $L_j$ (we provide a proof of the uniqueness of the characteristics in Lemma \ref{L:Eindeutigkeit des Tripel}) and denote the L\'evy exponent of $L_j$ by $\phi_j$ for all $j=1,\dots,d$, i.e.\ $\phi_j:H_j\rightarrow\mathbb C$ such that
$$\E e^{i\<L_j(t)\vert u\>} = \exp(t\phi_j(u))$$
for any $t\in\mathbb R_+$, $u\in H_j$ (cf.\ Peszat and Zabcyzk~\cite[Section 4.6]{peszat.zabczyk.07}). Define $L:=(L_1,\dots,L_d)$, $H:=H_1\otimes\dots\otimes H_d$ and $\<u\vert v\>:=\sum_{j=1}^d\<u_j\vert v_j\>_j$ for $u,v\in H$. Let $(b,Q,\nu)$ be the characteristics of $L$. For $\theta\in\mathbb R_+^d$ we define $L(\theta):=(L_1(\theta_1),\dots,L_d(\theta_d))$. For $a\in\mathbb R^d$ and $u\in H$ we define $au:=(a_1u_1,\dots,a_du_d)\in H$. For bounded linear operators $T_1,\dots,T_d$ on $H_1,\dots,H_d$ we define $T_1\times\cdots\times T_d:H\rightarrow H,u\mapsto (T_1u_1,\dots,T_du_d)$ and for $a\in\mathbb R_+^d$ and $T:=T_1\times \cdots\times T_d$ we also define $aT:=a_1T_1\times\cdots\times a_dT_d$.

Let $\Theta$ be a L\'evy process with values in $\mathbb R^d$ such that $\Theta_j$ is a subordinator for all $j=1,\dots,d$, cf.\ Sato~\cite[Definition 21.4]{sato.99} or Skorokhod~\cite{skorohod.91}. Let $(a,c,F)$ be the L\'evy-Khintchine triplet of $\Theta$. Then $c=0$ and $\int_{\{\vert \theta\vert\leq1\}}\theta F(d\theta)<\infty$ since the paths of $\Theta$ are of bounded variation, cf.\ \cite[Theorem 21.9]{sato.99}.  Define $a_0:=a-\int_{\mathbb R_+^d} \chi(\theta)F(d\theta)$ and
$$\psi:(\mathbb R_-+i\mathbb R)^d\rightarrow\mathbb C,s\mapsto a_0s+\int_{\mathbb R_+^d} (e^{s\theta}-1)F(d\theta).$$
From \cite[Theorem 8.1]{sato.99} it can be seen that $Ee^{s\Theta(1)} = \exp(\psi(s))$ for any $s\in(\mathbb R_-+i\mathbb R)^d$ and \cite[Theorem 21.5]{sato.99} yields $a_0\in(\mathbb R_+)^d$ and $F$ is concentrated on $(\mathbb R_+)^d$.

Further unexplained notation is used as in the books of Jacod and Shiryaev~\cite{js.87} and Peszat and Zabczyk~\cite{peszat.zabczyk.07}.

\begin{rem}
Like in the finite dimensional case there is a connection between the characteristics of a L\'evy process and its L\'evy exponent. Indeed, \cite[Theorem 4.27]{peszat.zabczyk.07} yields
$$\phi_j(u) = i\<u\vert b_j\>_j -\frac{1}{2}\<Q_ju\vert u\>_j +\int_{H_j}\left(e^{i\<u\vert x\>_j}-1-i\<u\vert\chi(x)\>_j\right)\nu_j(dx)$$
for any $u\in H_j$ and any $j=1,\dots,d$. Moreover, the triplet of $L=(L_1,\dots,L_d)$ can of course be expressed in the triplets of $L_1,\dots,L_d$. Namely we have
\begin{eqnarray*}
 b &=& (b_1,\dots,b_d) \\
 Q &=& Q_1\times\dots\times Q_d \\
 \nu(A) &=& \sum_{j=1}^d\nu_j^{\eta_j}(A)
\end{eqnarray*}
for any $A\subseteq \mathcal B(H)$ where $\eta_j$ is the natural embedding from $H_j$ into $H$, e.g.\ $\eta_1:H_1\rightarrow H,u\mapsto (u,0,\dots,0)$.
\end{rem}

As a sideremark we want to note that $H$ is a modul over the ring $(\mathbb R^d,+,\cdot)$ with respect to the multiplication $(a,u)\mapsto au$ as defined above where $\cdot$ is the componentwise multiplication on $\mathbb R^d$. The mapping $Q$ is an $\mathbb R^d$-linear mapping.

The case $d=1$ will be of special interest in this article and after Section 3 the results for this particular case will be used only.

\section{Subordinated Hilbert space valued L\'evy processes}\label{A:Theorie Subordination}
Multivariate subordination of $\mathbb R^d$-valued L\'evy processes has been treated in Barndorff-Nielsen, Pedersen and Sato~\cite{barndorff.et.al.01}. We extend their results to Hilbert space valued processes, and
define the {\em multivariate subordinated L\'evy process}
\begin{eqnarray}\label{e:Subordinierter Prozess}
X(t):= (L_1(\Theta_1(t)),\dots,L_d(\Theta_d(t))) = L(\Theta(t))
\end{eqnarray}
for any $t\geq0$.

As we shall see in this Section, the L\'evy exponent of the subordinated L\'evy process can be easily expressed in the L\'evy exponent of the original L\'evy processes and the Laplace exponent of the subordinator, see Theorem \ref{S:Fourier-Darstellung} below. Moreover, the characteristics of the subordinated L\'evy process can be expressed in terms of the characteristics of the original L\'evy processes, the characteristics of the subordinators and the distribution of the original L\'evy processes, see Theorem \ref{S:subordiniertes tripel} below.

\begin{rem}
 The process $X$ has c\`adl\`ag paths because $\Theta_1,\dots,\Theta_d$ have c\`adl\`ag paths and $L_1,\dots,L_d$ have c\`adl\`ag paths.
\end{rem}

\begin{rem}\label{R:characteristische Funktion}
Observe that the set of functions 
$$\{f_u:H\rightarrow\mathbb C,x\mapsto e^{i\<u\vert x\>}:u\in H\}$$
is a monotone class. Hence \cite[Corollary A.4.4]{ethier.kurtz.86} yields that the law of $H$-valued random variables $Y,Z$ coincide if and only if
 $$\E(e^{i\<u\vert Y\>}) =\E(e^{i\<u\vert Z\>})$$ for any $u\in H$.
\end{rem}

\begin{thm}\label{S:Fourier-Darstellung}

 The process $X$ is a L\'evy process and its L\'evy exponent is given by
$$\rho:H\rightarrow\mathbb C,u\mapsto \psi\left((\phi_1(u_1),\dots,\phi_d(u_d)\right).$$
\end{thm}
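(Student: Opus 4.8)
The plan is to establish two things: that $X$ is a L\'evy process, and that its characteristic function has the claimed form. I would begin with the characteristic function computation, since the L\'evy-process structure will fall out of it together with the known regularity of the paths. Fix $t \geq 0$ and $u \in H$. The key step is conditioning on the subordinator: since $\Theta$ is independent of $L = (L_1,\dots,L_d)$ and $L_1,\dots,L_d$ are themselves independent, I would write
\begin{eqnarray*}
\E\left(e^{i\<u\vert X(t)\>}\right) &=& \E\left(\E\left(\left.\prod_{j=1}^d e^{i\<u_j\vert L_j(\Theta_j(t))\>_j}\,\right\vert\, \Theta(t)\right)\right) \\
&=& \E\left(\prod_{j=1}^d \exp(\Theta_j(t)\phi_j(u_j))\right) = \E\left(e^{\<\Theta(t)\vert (\phi_1(u_1),\dots,\phi_d(u_d))\>}\right),
\end{eqnarray*}
where the inner equality uses the defining relation $\E e^{i\<L_j(s)\vert u_j\>_j} = \exp(s\phi_j(u_j))$ evaluated at the (conditionally deterministic) time $s = \Theta_j(t)$, together with conditional independence of the factors. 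The vector $(\phi_1(u_1),\dots,\phi_d(u_d))$ lies in $(\mathbb R_- + i\mathbb R)^d$ because each $\phi_j$ has non-positive real part (visible from the L\'evy-Khintchine representation of $\phi_j$ recalled in the first Remark: the Gaussian term contributes $-\tfrac12\<Q_ju_j\vert u_j\>_j \leq 0$ and the jump term has real part $\int(\cos\<u_j\vert x\>_j - 1)\,\nu_j(dx) \leq 0$). Hence the expression $\E e^{\<\Theta(t)\vert s\>}$ for $s = (\phi_1(u_1),\dots,\phi_d(u_d))$ is exactly $\exp(t\psi(s))$ by the formula $\E e^{s\Theta(1)} = \exp(\psi(s))$ from the preliminaries (applied to $\Theta(t)$, whose Laplace exponent is $t\psi$). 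This gives $\E e^{i\<u\vert X(t)\>} = \exp(t\rho(u))$ with $\rho(u) = \psi(\phi_1(u_1),\dots,\phi_d(u_d))$.

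Next I would verify $X$ is a L\'evy process. Stationarity and independence of increments follow from the same conditioning argument applied jointly to finitely many increments: for $0 = t_0 < t_1 < \dots < t_n$, the increments $X(t_k) - X(t_{k-1}) = L(\Theta(t_k)) - L(\Theta(t_{k-1}))$ can be handled by conditioning on the whole path of $\Theta$, using that $\Theta$ has stationary independent increments, that each $L_j$ has stationary independent increments, and that $L_1,\dots,L_d$ are mutually independent of each other and of $\Theta$; one computes the joint conditional characteristic function $\prod_k \exp((\Theta_j(t_k)-\Theta_j(t_{k-1}))\phi_j(\cdot))$ over $j$ and $k$, which factorises and depends on the $\Theta$-increments only, then takes expectation over $\Theta$. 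Combined with $X(0) = 0$, c\`adl\`ag paths (noted in the preceding Remark), and stochastic continuity — which follows because $\Theta$ is stochastically continuous and each $L_j$ is stochastically continuous, so $L_j(\Theta_j(t_n)) \to L_j(\Theta_j(t))$ in probability as $t_n \to t$ — this shows $X$ is a L\'evy process. That $\rho$ is genuinely the L\'evy exponent (i.e.\ that $\E e^{i\<u\vert X(t)\>} = \exp(t\rho(u))$ characterises it as such) is then immediate, and by Remark \ref{R:characteristische Funktion} the law of $X(t)$ is determined by this.

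The main obstacle I anticipate is justifying the conditioning rigorously in the infinite-dimensional setting: one must argue that $\E(f(L_j(s)))\big|_{s=\Theta_j(t)}$ is a legitimate regular conditional expectation, i.e.\ that $(s,\omega) \mapsto e^{i\<u_j\vert L_j(s)(\omega)\>_j}$ is jointly measurable and that independence of $\Theta$ and $L$ lets us substitute the random time — this is the Hilbert-space analogue of the standard "freezing" lemma and requires the separability of the $H_j$ (so that the Borel $\sigma$-algebra is well-behaved and the processes are measurable as maps into $H_j$) and the c\`adl\`ag property to get joint measurability in $(s,\omega)$. A secondary technical point is confirming $\operatorname{Re}\phi_j(u_j) \leq 0$ so that $\psi$ is actually evaluated inside its domain $(\mathbb R_- + i\mathbb R)^d$; this is routine from the L\'evy-Khintchine formula but should be stated explicitly. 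Everything else is bookkeeping.
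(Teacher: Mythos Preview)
Your proposal is correct and follows essentially the same route as the paper. The paper's proof computes the joint characteristic function of finitely many increments in one pass: it defines $f(\theta) := \E\exp(i\sum_k\langle u_k\vert L(\theta_{k+1})-L(\theta_k)\rangle)$ for deterministic time arrays $\theta$, factorises it using independence of coordinates and increments of $L$, and then uses independence of $L$ and $\Theta$ to write the desired expectation as $\E f(\Theta(\cdot))$, which factorises over $k$ by the independent-increment property of $\Theta$; the single-time case $n=1$ then reads off $\rho$. Your conditioning argument is the same computation phrased in conditional-expectation language, and your worry about making the ``freezing'' step rigorous is precisely what the paper sidesteps by the direct Fubini/independence formulation---define the function at deterministic times first, then substitute. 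Your explicit check that $\mathrm{Re}\,\phi_j(u_j)\leq 0$ so that $\psi$ is evaluated in its domain is a point the paper leaves implicit (it follows from $|e^{t\phi_j(u_j)}| = |\E e^{i\langle L_j(t)\vert u_j\rangle}| \leq 1$), and your separate discussion of stochastic continuity is not needed since c\`adl\`ag paths are already in hand.
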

\begin{proof}
This proof is along the lines of the proof of \cite[Theorem 3.3]{barndorff.et.al.01}. Let $n\in\mathbb N$, $u\in H^n$ and define
$$T_n:=\{\theta\in\mathbb R_+^{(n+1)\times d}: \theta_{k,j}<\theta_{k+1,j} \text{ for any }k=1,\dots,n, j=1,\dots,d\}.$$
Let 
$$f:T_n\rightarrow\mathbb C,\theta\mapsto \E\exp\left(i\sum_{k=1}^n\<u_{k}\vert (L(\theta_{k+1})-L(\theta_{k})\>\right).$$
Independence of the coordinates of $L$, independence of the increments of $L$ and \cite[Theorem 4.27]{peszat.zabczyk.07} yield
\begin{eqnarray*}
f(\theta) &=& \prod_{k=1}^n\prod_{j=1}^d\exp((\theta_{k+1,j}-\theta_{k,j})\phi_j(u_{k,j})) \\
  &=& \exp\left(\sum_{k=1}^n\sum_{j=1}^d(\theta_{k+1,j}-\theta_{k,j})\phi_j(u_{k,j})\right)
\end{eqnarray*}
for any $\theta\in T_n$. Since $L$ and $\Theta$ are independent we get
\begin{eqnarray*}
 &&\E\exp\left(i\sum_{k=1}^n\<u_k\vert L(\Theta(t_{k+1}))-L(\Theta(t_{k}))\>\right) \\
 &=& \E f((\Theta_j(t_k))_{k\in\{1,\dots,n+1\},j\in\{1,\dots,d\}}) \\
 &=& \E\left(\exp\left(\sum_{k=1}^n\sum_{j=1}^d(\Theta_{j}(t_{k+1})-\Theta_{j}(t_k))\phi_j(u_{k,j})\right)\right) \\
 &=& \prod_{k=1}^n\E\left(\exp\left(\sum_{j=1}^d(\Theta_{j}(t_{k+1})-\Theta_{j}(t_k))\phi_j(u_{k,j})\right)\right) \\
 &=& \prod_{k=1}^n\exp\left((t_{k+1}-t_k)\psi((\phi_{j}(u_{k,j}))_{j=1,\dots,d})\right) \\
 &=& \prod_{k=1}^n\exp\left((t_{k+1}-t_k)\rho(u_k)\right)
\end{eqnarray*}
for any $0\leq t_1<\dots<t_{n+1}$. Now it follows that $X$ is a L\'evy process.

Moreover, for $n=1$, $t_2=1,t_1=0$ we have
$$\E\exp\left(i\<u\vert L(\Theta(1))\>\right) = \exp\left(\rho(u))\right)$$
which is the claimed formula.
\end{proof}

We are now ready to compute the characteristics of the multivariate subordinated L\'evy process $X$.
\begin{thm}\label{S:subordiniertes tripel}
 We have $\int_{\mathbb R_+^d} \vert\E(\chi(L(\theta)))\vert F(d\theta)<\infty$. Define
\begin{eqnarray*}
 \beta &=& a_{0}b+\int_{\mathbb R_+^d}\E(\chi(L(\theta))) F(d\theta),\\
 \Gamma &=& a_0Q \quad\text{and}\\
 \mu(A) &=& \sum_{j=1}^da_{0,j}\nu_j^{\eta_j}(A) + \int_{\mathbb R_+^d} P^{L(t)}(A) F(d\theta) 
\end{eqnarray*}
for any Borel-sets $A\subseteq H$. Then $(\beta,\Gamma,\mu)$ is the characteristics of $X$.
\end{thm}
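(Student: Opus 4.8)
The plan is to check directly that the L\'evy exponent $\rho$ produced in Theorem~\ref{S:Fourier-Darstellung} has the L\'evy--Khintchine form built from $(\beta,\Gamma,\mu)$, and then to invoke the uniqueness of the characteristics (Lemma~\ref{L:Eindeutigkeit des Tripel}) to conclude. Unwinding the definition of $\psi$ and using independence of the coordinates of $L$, which gives $\exp\!\bigl(\sum_j\theta_j\phi_j(u_j)\bigr)=\prod_j\E e^{i\<u_j\vert L_j(\theta_j)\>_j}=\E e^{i\<u\vert L(\theta)\>}$, Theorem~\ref{S:Fourier-Darstellung} yields
\[
\rho(u)=\sum_{j=1}^d a_{0,j}\phi_j(u_j)+\int_{\mathbb R_+^d}\bigl(\E e^{i\<u\vert L(\theta)\>}-1\bigr)\,F(d\theta).
\]
Substituting the L\'evy--Khintchine representation of each $\phi_j$ recalled above and pushing $\nu_j$ forward along the embedding $\eta_j$ turns $\sum_j a_{0,j}\phi_j(u_j)$ into $i\<u\vert a_0b\>-\tfrac12\<a_0Qu\vert u\>+\int_H(e^{i\<u\vert x\>}-1-i\<u\vert\chi(x)\>)\sum_j a_{0,j}\nu_j^{\eta_j}(dx)$. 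In the remaining integral I would write $\E e^{i\<u\vert L(\theta)\>}-1=\int_H(e^{i\<u\vert x\>}-1)\,P^{L(\theta)}(dx)$, add and subtract the compensator $i\<u\vert\chi(x)\>$, and collect the compensator contributions into $i\<u\vert\int_{\mathbb R_+^d}\E(\chi(L(\theta)))\,F(d\theta)\>$; what remains is $\int_{\mathbb R_+^d}\!\int_H(e^{i\<u\vert x\>}-1-i\<u\vert\chi(x)\>)\,P^{L(\theta)}(dx)\,F(d\theta)$. Adding everything up presents $\rho$ in L\'evy--Khintchine form with exactly the stated $\beta$, $\Gamma$ and $\mu$.

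The step that needs real work is the assertion $\int_{\mathbb R_+^d}\|\E(\chi(L(\theta)))\|\,F(d\theta)<\infty$, together with $\int_{\mathbb R_+^d}\E[\|L(\theta)\|^2\wedge1]\,F(d\theta)<\infty$, which is what makes $\mu$ (restricted to $H\setminus\{0\}$, as L\'evy measures always are) a genuine L\'evy measure. On $\{|\theta|>1\}$ both bounds are trivial since $\|\chi(x)\|\le1$ and $F$ is finite there, so everything reduces to a bound of order $|\theta|$ near the origin. For this I would use that, for each fixed $\theta$, $L(\theta)$ has the same law as $Y(\theta)+Z(\theta)$, with $Y=(Y_1,\dots,Y_d)$ and $Z=(Z_1,\dots,Z_d)$ built from independent L\'evy processes where $Y_j$ has characteristics $(b_j,Q_j,1_{\{\|x\|_j\le1\}}\nu_j)$, so that $\E Y_j(t)=tb_j$ and $\E\|Y_j(t)\|_j^2=t^2\|b_j\|_j^2+t\,\tr(Q_j)+t\int_{\{\|x\|_j\le1\}}\|x\|_j^2\,\nu_j(dx)$, and $Z_j$ is compound Poisson with intensity $\lambda_j:=\nu_j(\{\|x\|_j>1\})<\infty$. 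Splitting $\E[L(\theta)1_{\{\|L(\theta)\|\le1\}}]$ over $\{Z(\theta)=0\}$ and its complement: on $\{Z(\theta)=0\}$ one has $L(\theta)=Y(\theta)$, so by independence that part has norm at most $\|\E[Y(\theta)1_{\{\|Y(\theta)\|\le1\}}]\|\le\|\theta b\|+\E\|Y(\theta)\|^2$, which for $|\theta|\le1$ is $\lesssim|\theta|$ because $\theta_j^2\le\theta_j$; on the complement the norm is at most $P(Z(\theta)\ne0)\le\sum_j\theta_j\lambda_j\lesssim|\theta|$. Since $\int_{\{|\theta|\le1\}}|\theta|\,F(d\theta)\le\sum_j\int_{\{|\theta|\le1\}}\theta_j\,F(d\theta)<\infty$ by the bounded variation of the paths of $\Theta$, the first claim follows; the bound on $\E[\|L(\theta)\|^2\wedge1]$ is analogous and in fact simpler, being at most $\E\|Y(\theta)\|^2+P(Z(\theta)\ne0)$. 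The possibly infinite atom $\int_{\mathbb R_+^d}P^{L(\theta)}(\{0\})\,F(d\theta)$ is harmless since the L\'evy--Khintchine integrand vanishes at $0$.

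It then remains to justify the interchanges of integration used above. Measurability of $\theta\mapsto\E e^{i\<u\vert L(\theta)\>}=\prod_j e^{\theta_j\phi_j(u_j)}$ is clear; the Fubini step exchanging $F(d\theta)$ with $P^{L(\theta)}(dx)$ in the jump term is legitimate because $|e^{i\<u\vert x\>}-1-i\<u\vert\chi(x)\>|\le c_u(\|x\|^2\wedge1)$ and the estimate above makes the double integral finite; and separating off the $i\<u\vert\chi(x)\>$-term is exactly what $\int_{\mathbb R_+^d}\|\E(\chi(L(\theta)))\|\,F(d\theta)<\infty$ licenses. With $\rho$ written in L\'evy--Khintchine form, Lemma~\ref{L:Eindeutigkeit des Tripel} identifies $(\beta,\Gamma,\mu)$ as the characteristic triplet of $X$. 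I expect the main obstacle to be precisely the near-origin estimate $\|\E(\chi(L(\theta)))\|\lesssim|\theta|$, that is, extracting the correct rate through the $Y/Z$ decomposition and the second-moment bound for $Y$, with the rest being routine manipulation of the L\'evy--Khintchine formula.
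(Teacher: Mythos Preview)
Your overall architecture---expand $\rho$ via Theorem~\ref{S:Fourier-Darstellung}, split off $\sum_j a_{0,j}\phi_j(u_j)$, rewrite the $F$-integral against the laws $P^{L(\theta)}$, add and subtract the compensator, and identify the triplet by uniqueness---is exactly what the paper does. The manipulations with $\eta_j$ and the Fubini justification are also the same in spirit.

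Where you genuinely diverge is in the key technical estimate $\|\E\chi(L(\theta))\|\lesssim|\theta|$ (and its companion for $\E[\|L(\theta)\|^2\wedge1]$). The paper obtains these through an analytic route: it invokes the generator $\mathcal A$ of the Markov semigroup of $L$ (Lemma~\ref{L:Wachstum}) to show $|\E f(L(\theta))-f(0)|\le C\sum_j\theta_j$ for any $f\in UC_b^2$, then replaces $\chi$ by a smooth surrogate $\chi_1$, applies Hahn--Banach to pass from scalar to $H$-valued test functions, and controls the error $\chi_1-\chi$ by a further smooth function $\chi_2$ (Lemmas~\ref{L:Wachstum chi} and~\ref{L:Wachstum g}). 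Your argument is purely probabilistic: decompose $L=Y+Z$ into small-jump plus compound Poisson parts, use the exact second-moment formula for $Y$ together with $\theta_j^2\le\theta_j$ near the origin, and bound the complementary event by $P(Z(\theta)\neq0)\le\sum_j\theta_j\lambda_j$. Your route is more elementary---it avoids the semigroup machinery, the $C^2$ mollification of $\chi$, and the Hahn--Banach step---at the cost of being tailored to the two specific integrands needed here, whereas the paper's Lemma~\ref{L:Wachstum} gives linear growth for a whole class of smooth functionals at once. Both are correct; your observation that the atom at $0$ is harmless is in fact a point the paper glosses over.
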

\begin{proof}
Define the measure 
$$\widetilde\mu(A):=\int_{\mathbb R_+^d} P^{L(\theta)}(A) F(d\theta)$$ 
for any Borel-sets $A\subseteq H$. Observe that for any measurable function $f:H\rightarrow\mathbb R$ which is positive or $\widetilde\mu$-integrable we have
$$\int_H fd\widetilde\mu = \int_{\mathbb R_+^d}\E(f(L(\theta)))F(d\theta).$$
By Lemma \ref{L:Wachstum chi} there is $C>1$ such that $\vert\E\chi(L(\theta))\vert \leq \vert \theta\vert C$ for any $\theta\in\mathbb R_+^d$. Thus $\theta\mapsto \vert\E\chi(L(\theta))\vert$ is bounded by $(1\wedge\vert \theta\vert)C$. Hence \cite[Theorem 21.5]{sato.99} yields that it is $F$-integrable which is the first part of the claim. Theorem \ref{S:Fourier-Darstellung} yields that the L\'evy exponent of $X$ is given by
$$\rho(u) := \psi((\phi_j(u_j))_{j=1,\dots,d})$$
for any $u\in H$. Then
\begin{eqnarray*}
 \rho(u) &=& \psi((\phi_j(u_j))_{j=1,\dots,d}) \\
 &=& \sum_{j=1}^d a_{0,j}\phi_j(u_j) + \int_{\mathbb R_+^d} \left(e^{\sum_{j=1}^d\theta_j\phi_j(u_j)} -1\right) F(d\theta)
\end{eqnarray*}
for any $u\in H$. Moreover,
\begin{eqnarray*}
 && \int_{\mathbb R_+^d} \left(e^{\sum_{j=1}^d\theta_j\phi_j(u_j)} -1\right) F(d\theta) \\
 &=& \int_{\mathbb R_+^d} \left(\E\left(\exp\left(i\<L(\theta)\vert u\>\right)\right) -1-i\<\E\chi(L(\theta))\vert u\>\right) F(d\theta)+i\<\gamma\vert u\> \\
 &=& \int_{\mathbb R_+^d} \E\left(\exp\left(i\<L(\theta)\vert u\>\right)-1-i\<\chi(L(\theta))\vert u\>\right) F(d\theta)+i\<\gamma\vert u\>
\end{eqnarray*}
where $\gamma = \int_{\mathbb R_+^d}\E\chi(L(\theta)) F(d\theta)$ for any $u\in H$. Let $u\in H$ and define
$$f:H\rightarrow\mathbb R_+,x\mapsto \exp\left(i\<u\vert x\>\right)-1-i\<\chi(x)\vert u\>.$$
Lemma \ref{L:Wachstum g} yields $g(\theta):=\E\vert f(L(\theta))\vert\leq \vert\theta\vert C_2$ for some $C_2>0$ and any $\theta\in\mathbb R_+^d$. Since $g$ is positive and bounded by some constant $C_3$, we have
\begin{eqnarray*}
 \int_H \vert f(x)\vert\widetilde\mu(dx) &=& \int_{\mathbb R_+^d} g(\theta) F(d\theta) \\
 &\leq& \int_{\mathbb R_+^d}(1\wedge \vert\theta\vert) F(d\theta) (C_2\vee C_3) \\
 &<&\infty
\end{eqnarray*}
Thus $f$ is $\widetilde\mu$-integrable. Hence we have
$$\int_{\mathbb R_+^d} \left(e^{\sum_{j=1}^d\theta_j\phi_j(u_j)} -1\right) F(d\theta) = \int_Hf(x)\widetilde\mu(dx) + i\<\gamma\vert u\>.$$
\cite[Theorem 4.27]{peszat.zabczyk.07} implies that the characteristics can be read from the representation
$$\rho(u) = \sum_{j=1}^d a_{0,j}\phi_j(u_j) + \int_Hf(x)\widetilde\mu(dx) + i\<\gamma\vert u\>.$$
Hence, the proof is complete.
\end{proof}

\section{Probabilistic features of subordinated L\'evy processes}\label{A:Momente und Martingale}
In this section we want to investigate the probabilistic features of the subordinated L\'evy process $X(t)=L(\Theta(t))$, i.e.\ we give necessary and sufficient conditions for $X$ to have finite first or second moment and provide formulas for those moments. Thanks to \cite[Section 4.9]{peszat.zabczyk.07} one can characterise finiteness of the second moment of $X$ completely in terms of moments of $L$ and $\Theta$. It turns out that square integrability of $L$ and $\Theta$ are sufficient and essentially necessary for square integrability of $X$ if $L$ is not a martingale. If $L$ is a square-integrable martingale, then integrability of $\Theta$ is sufficient and essentially necessary to ensure that $X$ is square integrable, cf.\ Theorem \ref{S:charakterisierung quadint} below. We also show that $X$ is integrable if $L$ and $\Theta$ are integrable where we make use of several results collected in the Appendix, cf.\ Theorem \ref{S:charakterisierung von Integrierbarkeit} below. If $L$ is a square-integrable martingale, then it is sufficient that $\sqrt{\vert\Theta(1)\vert}$ is integrable, cf.\ Theorem \ref{S:X Martingal} below. However, the authors do not know if, under the assumption that $L$ is square-integrable, the integrability of $\sqrt{\vert\Theta(1)\vert}$ is necessary for integrability of $X$. Corollary \ref{C:int im Guasschen fall} shows that this is true if $L$ is a cylindrical Brownian motion. If $L$ is a martingale but not square-integrable, then it is possible that integrability of $\sqrt{\vert\Theta(1)\vert}$ is not sufficient to ensure integrability of $X$ as we will show in Proposition \ref{P:asLP Wachstumsfunktion} at the end of Section \ref{A:alpha stabil}.

\begin{rem}
 Let $(\mathcal F_t)_{t\in\mathbb R_+}$ be a filtration such that $L(t)-L(s)$ is independent of $\mathcal F_s$. Then the following statements are equivalent.
\begin{itemize}
\item $L$ is a $(\mathcal F_t)_{t\in\mathbb R_+}$-martingale.
\item $L$ is a martingale w.r.t.\ its own (right-continuous) filtration.
\item $L$ is mean zero, i.e.\ $L$ has finite expectation and $\E L(t)=0$ for any $t\in\mathbb R_+$.
\item $\int_{\{\vert x\vert>1\}}\vert x\vert \nu(dx)<\infty$ and $0=b+\int_{\{\vert x\vert>1\}} x\nu(dx)$.
\end{itemize}
\end{rem}

\begin{defn}
 Let $Y$ be any $H$-valued random variable with finite second moment. Then the {\em covariance operator} $\Cov(Y)$ of $Y$ is defined by the equation
 $$\<\Cov(Y)x\vert y\> = \E(\<Y-\E Y\vert x\>\<Y-\E Y\vert y\>)$$
for any $x,y\in H$.
\end{defn}

We first recall some properties of square integrable L\'evy processes.
\begin{prop}\label{P:Quadratstruktur}
 The L\'evy process $L$ is square integrable if and only if $\int_{H}\vert x\vert^2 \nu(dx)<\infty$. If $L$ is square integrable, then 
\begin{itemize}
 \item $\E(L(t)) = t\left(b+\int_{\vert x\vert>1}x\nu(dx)\right)$,
 \item $\E(\vert L(t)-\E L(t)\vert^2) = t\left(\tr(Q)+\int_H\vert x\vert^2\nu(dx)\right)$,
 \item $M(t):= L(t)-t\E L(1)$ is a mean zero and square integrable L\'evy process with the characteristics $(b-\E L(1),Q,\nu)$ and
 \item $\<\Cov(L(t)) x\vert y\> = \<Q x\vert y\> + \int_H \<x\vert z\>\<y\vert z\> \nu(dz)$ for any $x,y\in H$.
\end{itemize}
\end{prop}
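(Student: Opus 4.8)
The plan is to reduce everything to the scalar theory of square-integrable Lévy processes combined with the Lévy--Khintchine representation for $H$-valued Lévy processes recorded in the first remark of the excerpt. First I would establish the equivalence ``$L$ square integrable $\iff \int_H |x|^2\,\nu(dx)<\infty$'': the implication $\Leftarrow$ follows from \cite[Section 4.9]{peszat.zabczyk.07}, which provides exactly the second-moment characterisation for Hilbert space valued Lévy processes; for $\Rightarrow$ one uses that a square-integrable Lévy process has a Lévy measure with finite second moment away from the origin, and the small-jump part always contributes a finite second moment, so $\int_H |x|^2\,\nu(dx)<\infty$ in total. Alternatively one can test with one-dimensional projections $u\in H$: $\langle L(t)\mid u\rangle$ is a real Lévy process with Lévy measure $\nu\circ(\langle\cdot\mid u\rangle)^{-1}$, apply \cite[Section 25]{sato.99} to each, and then pass from all projections to the $H$-valued statement using that an $H$-valued random variable is square integrable iff $\E|Y|^2=\sum_k \E\langle Y\mid e_k\rangle^2<\infty$ for an orthonormal basis $(e_k)$, invoking monotone convergence.

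Next, assuming $\int_H|x|^2\,\nu(dx)<\infty$, I would derive the four bulleted formulas. For the mean, differentiate the characteristic function at $0$, or more cleanly: the compensated-jump representation of $L$ writes $L(t)$ as (drift) $+$ (Brownian part, mean zero) $+$ (compensated small jumps, mean zero) $+$ (large jumps, with mean $t\int_{|x|>1} x\,\nu(dx)$ once the second-moment — hence first-moment away from the origin — condition is in force); collecting the deterministic pieces gives $\E L(t) = t(b+\int_{|x|>1}x\,\nu(dx))$. This is also immediate by testing against $u$ and using the scalar formula. For the variance, $\E|L(t)-\E L(t)|^2 = \sum_k \E\langle L(t)-\E L(t)\mid e_k\rangle^2$; each scalar term equals $t(\langle Q e_k\mid e_k\rangle + \int_H \langle x\mid e_k\rangle^2\,\nu(dx))$ by the one-dimensional variance formula, and summing over $k$ yields $t(\tr(Q)+\int_H|x|^2\,\nu(dx))$, the interchange of sum and integral being justified by Tonelli. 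The statement about $M(t):=L(t)-t\E L(1)$ being a mean-zero square-integrable Lévy process is then formal: subtracting a deterministic linear drift preserves the Lévy property and square integrability, kills the mean, and shifts only the drift coefficient $b\mapsto b-\E L(1)$ while leaving $Q$ and $\nu$ untouched (the Remark's equivalences confirm mean-zero is consistent with these characteristics). For the covariance operator, by the definition of $\Cov$ and polarisation it suffices to compute $\langle \Cov(L(t))x\mid x\rangle = \E\langle L(t)-\E L(t)\mid x\rangle^2$ for each $x$; this is again the scalar variance formula applied to the real Lévy process $\langle L(t)\mid x\rangle$, giving $t(\langle Qx\mid x\rangle + \int_H\langle x\mid z\rangle^2\,\nu(dz))$, and bilinearity extends it to $\langle\Cov(L(t))x\mid y\rangle$ for general $x,y$ (absorbing the factor $t$ into the statement's convention, or carrying it as written).

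The main obstacle is bookkeeping rather than conceptual: one must be careful that the ``reduce to scalar projections'' step is legitimate in infinite dimensions, i.e.\ that $\int_H|x|^2\,\nu(dx)<\infty$ genuinely controls all the interchanges of summation over an orthonormal basis with integration against $\nu$ and with expectation, and that the Lévy--Itô type decomposition of an $H$-valued Lévy process used above is available at the required level of generality — both of which are covered by \cite[Section 4]{peszat.zabczyk.07}. A secondary subtlety is the $\Rightarrow$ direction of the first claim: deducing finiteness of $\int_H|x|^2\,\nu(dx)$ from square integrability of $L(t)$ requires knowing that the jump part of $L$ cannot have ``too heavy'' a Lévy measure without destroying square integrability, which one can get either from the cited Peszat--Zabczyk material directly or by projecting and quoting \cite[Theorem 25.3]{sato.99} coordinatewise and summing.
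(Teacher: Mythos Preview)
Your plan is correct, but it is far more than the paper actually does: the paper's entire proof of this proposition is the single sentence ``See \cite[Theorem 4.47 and Theorem 4.49]{peszat.zabczyk.07}.'' In other words, the authors treat this as a known result and defer wholesale to Peszat--Zabczyk rather than reproving anything.

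Your projection-to-scalar strategy is a legitimate way to recover the content of those theorems, and the bookkeeping concerns you flag (Tonelli for the basis sum, availability of the L\'evy--It\^o decomposition in $H$) are exactly the right ones; they are indeed handled in \cite[Section 4]{peszat.zabczyk.07}. So there is no gap in your argument --- it simply reproduces what the cited reference already contains. If you want to match the paper, replace your plan with the bare citation; if you want a self-contained argument, what you wrote is an adequate sketch of one. (Your parenthetical about the factor $t$ in the covariance formula is also apt: as stated, the fourth bullet omits the $t$ that the second bullet carries, so either the paper intends $t=1$ there or it is a minor slip.)
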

\begin{proof}
See \cite[Theorem 4.47 and Theorem 4.49]{peszat.zabczyk.07}.
\end{proof}
\begin{rem}
The covariance operator of $L$ can, of course, be expressed in the covariance operators of $L_1,\dots,L_d$. We have
 $$\Cov(L(1)) = \Cov(L_1(1))\times\cdots\times \Cov(L_d(1)).$$
\end{rem}

We first aim at characterising square integrability of the subordinated process $X$, see Theorem \ref{S:charakterisierung quadint} below. The proof is devided into three parts where the next two lemmas each contain a part. It is essentially necessary that $L$ is square integrable for $X$ being square integrable. However, square integrability of the multivariate subordinator $\Theta$ is only needed if $L$ is not a martingale. If $L$ is a square integrable martingale, then integrability for $\Theta$ is sufficient to ensure that $X$ is square integrable. This is the statement of the next Lemma.

\begin{lem}\label{L:Quadrat Martingal}
 Let $L$ be mean zero and square integrable and $\Theta$ be integrable. Then $X$ is mean zero and square integrable and 
$$\Cov(X(1)) = \E(\Theta(1))\Cov(L(1)).$$
\end{lem}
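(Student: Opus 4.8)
The plan is to condition on the subordinator and reduce everything to the known moment structure of the square-integrable martingale $L$ established in Proposition \ref{P:Quadratstruktur}. Concretely, I would work on the product space carrying $L$ and $\Theta$ independently, and first compute the conditional expectation $\E(X(1)\mid\sigma(\Theta))$. Since $L$ is mean zero and square integrable, for any fixed deterministic $\theta\in\mathbb R_+^d$ we have $\E L(\theta) = 0$ (using that each $L_j$ is mean zero so $\E L_j(\theta_j)=0$), and hence by independence $\E(L(\Theta(1))\mid\Theta) = 0$ a.s., so $\E X(1)=0$; one must be a little careful here to justify that $\theta\mapsto\E L(\theta)$ is the relevant version of the conditional expectation, which follows from a monotone class / Fubini argument (the map $\theta\mapsto P^{L(\theta)}$ is a measurable kernel). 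This gives mean zero for free.

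Next I would handle the second moment. The key identity is the conditional variance decomposition: $\E|X(1)|^2 = \E\big(\E(|L(\Theta(1))|^2\mid\Theta)\big)$. For fixed $\theta$, Proposition \ref{P:Quadratstruktur} applied to $L$ (which is mean zero) gives $\E|L(\theta)|^2 = \sum_{j=1}^d\theta_j\big(\tr(Q_j)+\int_{H_j}|x|^2\nu_j(dx)\big)$ — i.e.\ a linear function of $\theta$, say $\E|L(\theta)|^2 = \langle\theta\mid\kappa\rangle$ for the vector $\kappa_j=\tr(Q_j)+\int_{H_j}|x|^2\nu_j(dx)\in\mathbb R_+$. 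Substituting $\theta=\Theta(1)$ and taking expectations yields $\E|X(1)|^2 = \langle\E(\Theta(1))\mid\kappa\rangle<\infty$, finite precisely because $\Theta$ is integrable and each $\kappa_j<\infty$ by square integrability of $L_j$. This simultaneously shows $X(1)$ is square integrable. Combined with the first moment calculation, $X$ is mean zero (being a L\'evy process with $\E X(1)=0$, by the Remark preceding the Definition of covariance operator) and square integrable.

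For the covariance operator formula I would argue componentwise on inner products. Fix $x,y\in H$. By the conditional variance/covariance decomposition and $\E(X(1)\mid\Theta)=0$,
$$\langle\Cov(X(1))x\mid y\rangle = \E\big(\langle X(1)\mid x\rangle\langle X(1)\mid y\rangle\big) = \E\Big(\E\big(\langle L(\Theta(1))\mid x\rangle\langle L(\Theta(1))\mid y\rangle\mid\Theta\big)\Big).$$
For fixed $\theta$, the inner conditional expectation equals $\langle\Cov(L(\theta))x\mid y\rangle$ since $L(\theta)$ is mean zero; and by Proposition \ref{P:Quadratstruktur} (together with the Remark expressing $\Cov(L(1))$ as $\Cov(L_1(1))\times\cdots\times\Cov(L_d(1))$ and the scaling $\Cov(L(\theta))=\theta\Cov(L(1))$, which holds because each $\Cov(L_j(\theta_j))=\theta_j\Cov(L_j(1))$) we get $\langle\Cov(L(\theta))x\mid y\rangle = \langle(\theta\Cov(L(1)))x\mid y\rangle$. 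Plugging in $\theta=\Theta(1)$, taking expectations, and pulling the expectation through the $\mathbb R^d$-linear dependence on $\theta$ gives $\langle\Cov(X(1))x\mid y\rangle = \langle(\E(\Theta(1))\Cov(L(1)))x\mid y\rangle$, which is the claim.

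The main obstacle is not any single hard estimate but rather the bookkeeping needed to make the conditioning rigorous in the infinite-dimensional setting: one must justify that $\theta\mapsto\E L(\theta)$, $\theta\mapsto\E|L(\theta)|^2$ and $\theta\mapsto\langle\Cov(L(\theta))x\mid y\rangle$ are measurable and give the correct regular conditional expectations given $\Theta$, and that Fubini applies so that, e.g., $\E|X(1)|^2 = \int_{\mathbb R_+^d}\E|L(\theta)|^2\,P^{\Theta(1)}(d\theta)$. This is exactly the kind of kernel/monotone-class argument already used implicitly in the proof of Theorem \ref{S:subordiniertes tripel} (where $\widetilde\mu(A)=\int P^{L(\theta)}(A)F(d\theta)$ is manipulated freely), so I would invoke the same machinery; once that is granted, the linearity of $\E|L(\theta)|^2$ and $\Cov(L(\theta))$ in $\theta$ does all the remaining work and the finiteness claims are immediate from the integrability hypotheses on $L$ and $\Theta$.
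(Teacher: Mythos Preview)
Your proposal is correct and follows essentially the same route as the paper: condition on $\Theta$, use Proposition \ref{P:Quadratstruktur} to write $\E\vert L(\theta)\vert^2$ as a linear function $\sum_j\theta_j\E\vert L_j(1)\vert^2$, substitute $\theta=\Theta(1)$ and take expectations to obtain square integrability, then repeat the conditioning for $\E X(1)=0$ and for $\langle\Cov(X(1))x\vert y\rangle$. The paper's proof is shorter only because it asserts ``by conditioning on $\Theta$'' without the measurability/Fubini discussion you include; your extra care there is warranted but does not change the argument.
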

\begin{proof}
 Let $g(\theta):=\E(\vert L(\theta)\vert^2)$ for any $\theta\in\mathbb R_+^d$. Proposition \ref{P:Quadratstruktur} yields
$$g(\theta)=\sum_{j=1}^d\theta_j\E(\vert L_j(1)\vert^2)$$
for any $\theta\in\mathbb R_+^d$. By conditioning on $\Theta$ we get
\begin{eqnarray*}
\E(\vert X(t)\vert^2) &=& \E g(\Theta(t)) \\
 &=& \sum_{j=1}^d\E \Theta_j(t) \E(\vert L_j(1)\vert^2) \\
 &<& \infty
\end{eqnarray*}
for any $t\geq0$. Thus $X$ is square integrable. Conditioning on $\Theta$ yields $\E X(t)=0$ for any $t\geq0$ and
\begin{eqnarray*}
 \<\Cov(X(1))a\vert b\> &=& \E(\<X(1))\vert a\>\< X(1)\vert b\>) \\
 &=& \sum_{j=1}^d\E(\Theta_j(1))\<\Cov(L_j(1)) a_j\vert b_j\> \\
 &=& \Big\<\E(\Theta(1))\Cov(L(1))a \Big\vert b \Big\>
\end{eqnarray*}
for any $a,b\in H$.
\end{proof}

If $L$ is square integrable but not a martingale, then $\Theta$ has to be square integrable in order to ensure that $X$ is square integrable. Theorem \ref{S:charakterisierung quadint} below will show that square integrability of $\Theta$ is essentially necessary to ensure square integrability of $X$.
\begin{lem}\label{L:Erw und Varianz}
 Let $L$ and $\Theta$ be square integrable. Then $X$ is square integrable,
\begin{eqnarray*}
\E(X(1)) &=& \E\Theta(1)\E L(1)\quad\text{and}\\
\Cov(X(1)) &=& \E(\Theta(1))\Cov(L(1)) + \sum_{i,j=1}^d\Cov(\Theta(1))_{i,j} (\E L_i(1))\otimes(\E L_j(1))
\end{eqnarray*}
where $x\otimes y:H\rightarrow H,z\mapsto \<x\vert z\>y$ for any $x,y\in H$.
\end{lem}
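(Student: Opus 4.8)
The plan is to reduce the statement to Lemma~\ref{L:Quadrat Martingal} by splitting off the drift of $L$. By Proposition~\ref{P:Quadratstruktur} one can write $L(t)=M(t)+t\,\E L(1)$ with $M$ a mean zero, square integrable L\'evy process; here $M$ is independent of $\Theta$, being a deterministic functional of $L$, and $\Cov(M(1))=\Cov(L(1))$ since $M(1)=L(1)-\E L(1)$. Set $\widetilde X(t):=M(\Theta(t))$, so that $X(t)=L(\Theta(t))=\widetilde X(t)+\Theta(t)\,\E L(1)$ for every $t$. As $\Theta$ is square integrable, it is in particular integrable, so Lemma~\ref{L:Quadrat Martingal} applies to $\widetilde X$ and shows that $\widetilde X$ is mean zero, square integrable, with $\Cov(\widetilde X(1))=\E(\Theta(1))\Cov(M(1))=\E(\Theta(1))\Cov(L(1))$. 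Furthermore $\E|\Theta(1)\,\E L(1)|^2=\sum_{j=1}^d\E(\Theta_j(1)^2)\,|\E L_j(1)|^2<\infty$ by square integrability of $\Theta$, and the same bound holds at any $t$; hence $X(t)$ is a sum of two square integrable random variables and therefore square integrable. Taking expectations in $X(1)=\widetilde X(1)+\Theta(1)\,\E L(1)$ and using $\E\widetilde X(1)=0$ together with linearity of the $\mathbb R^d$-module action yields $\E X(1)=\E(\Theta(1)\,\E L(1))=\E(\Theta(1))\,\E L(1)$.

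For the covariance the decisive point is that the mixed contributions vanish, so that $\Cov(X(1))=\Cov(\widetilde X(1))+\Cov(\Theta(1)\,\E L(1))$. Writing $X(1)-\E X(1)=\widetilde X(1)+(\Theta(1)-\E\Theta(1))\,\E L(1)$ and expanding $\<X(1)-\E X(1)\vert a\>\<X(1)-\E X(1)\vert b\>$ for $a,b\in H$, the cross term $\E\big(\<\widetilde X(1)\vert a\>\<(\Theta(1)-\E\Theta(1))\,\E L(1)\vert b\>\big)$ — well defined by Cauchy--Schwarz — is evaluated by conditioning on $\Theta$ exactly as in the proof of Lemma~\ref{L:Quadrat Martingal}: $\E(\widetilde X(1)\mid\Theta)=0$ while $\Theta(1)\,\E L(1)$ is $\Theta$-measurable, so the cross term is $0$, and similarly the symmetric one. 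It then remains to compute $\Cov(\Theta(1)\,\E L(1))$. With $m_j:=\E L_j(1)$ one has $\Theta(1)\,\E L(1)=(\Theta_1(1)m_1,\dots,\Theta_d(1)m_d)$, hence $\<(\Theta(1)-\E\Theta(1))\,\E L(1)\vert a\>=\sum_{i=1}^d(\Theta_i(1)-\E\Theta_i(1))\<m_i\vert a_i\>_i$, and therefore $\<\Cov(\Theta(1)\,\E L(1))a\vert b\>=\sum_{i,j=1}^d\Cov(\Theta(1))_{i,j}\<m_i\vert a_i\>_i\<m_j\vert b_j\>_j=\sum_{i,j=1}^d\Cov(\Theta(1))_{i,j}\<((\E L_i(1))\otimes(\E L_j(1)))a\vert b\>$, where $\E L_i(1)$, $\E L_j(1)$ are regarded as elements of $H$ via the natural embeddings $\eta_i$, $\eta_j$. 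Adding $\Cov(\widetilde X(1))=\E(\Theta(1))\Cov(L(1))$ gives the asserted identity.

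The routine ingredients are Proposition~\ref{P:Quadratstruktur}, the elementary second-moment bound for $\Theta(1)\,\E L(1)$, and the coordinatewise algebra displayed above. I expect the only genuinely delicate point to be the vanishing of the cross-covariance terms, together with the bookkeeping of the $\mathbb R^d$-module multiplication $\theta\mapsto\theta u$ and of the embeddings $\eta_j$ concealed inside the tensors $(\E L_i(1))\otimes(\E L_j(1))$; both are, however, settled by the conditioning argument already used in the proof of Lemma~\ref{L:Quadrat Martingal} and by unwinding the definitions recalled in the preliminaries.
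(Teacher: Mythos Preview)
Your argument is correct. The paper's own proof is the single line ``This follows easily by conditioning on the process $\Theta$,'' so any explicit proof is necessarily an elaboration of that hint. Your organisation differs slightly from a direct implementation: rather than computing $\E|L(\theta)|^2$ and $\E(\langle L(\theta)\vert a\rangle\langle L(\theta)\vert b\rangle)$ as explicit polynomials in $\theta$ and then integrating against the law of $\Theta(1)$, you split $L(t)=M(t)+t\,\E L(1)$ and recycle Lemma~\ref{L:Quadrat Martingal} for the mean-zero piece $\widetilde X=M(\Theta(\cdot))$, handling the drift contribution and the cross terms separately. This buys cleaner bookkeeping at the small cost of invoking the earlier lemma (applied with $M$ in place of $L$, which is legitimate since $M_1,\dots,M_d$ are independent mean-zero square-integrable L\'evy processes independent of $\Theta$); the underlying mechanism --- conditioning on $\Theta$, used both inside Lemma~\ref{L:Quadrat Martingal} and to kill the cross-covariances --- is the same as the paper's.
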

\begin{proof}
 This follows easily by conditioning on the process $\Theta$.
\end{proof}

We can now state the characterisation of square integrability of $X$.
\begin{thm}\label{S:charakterisierung quadint}
 $X$ is square integrable if and only if $X_j$ is square integrable for all $j=1,\dots,d$. Let $j\in\{1,\dots,d\}$. Then $X_j$ is square integrable if and only if any of the following statements hold.
\begin{enumerate}
 \item $L_j$ and $\Theta_j$ are square integrable.
 \item $L_j$ is mean zero and square integrable and $\Theta_j$ is integrable.
 \item $\Theta_j =0$ a.s.
 \item $L_j=0$ a.s.
\end{enumerate}
Moreover, $X_j$ is mean zero and square integrable if and only if (2), (3) or (4) holds. If (1) holds for any $j=1,\dots,d$, then 
\begin{eqnarray*}
\E(X(1)) &=& \E\Theta(1)\E L(1)\quad\text{and}\\
\Cov(X(1)) &=& \E(\Theta(1))\Cov(L(1)) + \sum_{i,j=1}^d\Cov(\Theta(1))_{i,j} (\E L_i(1))\otimes(\E L_j(1))
\end{eqnarray*}
where $x\otimes y:H\rightarrow H,z\mapsto \<x\vert z\>y$ for any $x,y\in H$.
If (2) holds for any $j=1,\dots,d$, then
\begin{eqnarray*}
\Cov(X(1)) &=& \Cov(L(1))\E(\Theta(1)). 
\end{eqnarray*}
\end{thm}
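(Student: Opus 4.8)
The plan is to reduce the characterisation of square integrability of the vector-valued process $X$ to the one-dimensional statements about each coordinate $X_j = L_j(\Theta_j(\cdot))$, and then prove the equivalence $(1)\vee(2)\vee(3)\vee(4)$ for a single coordinate. The reduction is immediate: since $H = H_1\otimes\dots\otimes H_d$ with $|u|^2 = \sum_j |u_j|_j^2$, we have $\E|X(t)|^2 = \sum_j \E|X_j(t)|_j^2$, so $X$ is square integrable iff every $X_j$ is. Likewise $X$ is mean zero iff every $X_j$ is mean zero. Hence it suffices to work coordinate-wise, and I may as well assume $d=1$ for the rest of the argument, writing $L$, $\Theta$, $X$ for $L_j$, $\Theta_j$, $X_j$.

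\textbf{Sufficiency.} The implications $(1)\Rightarrow X$ square integrable and $(2)\Rightarrow X$ mean zero and square integrable are exactly Lemma \ref{L:Erw und Varianz} and Lemma \ref{L:Quadrat Martingal} applied in the scalar-subordinator case, and these also supply the stated moment formulas. For $(3)$: if $\Theta=0$ a.s., then $X(t)=L(0)=0$ a.s., which is trivially mean zero and square integrable. For $(4)$: if $L=0$ a.s., then $X(t)=L(\Theta(t))=0$ a.s. as well. So in all four cases $X_j$ is square integrable, and in cases $(2)$, $(3)$, $(4)$ it is in addition mean zero.

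\textbf{Necessity.} This is the part requiring care. Assume $X$ is square integrable but that none of $(3)$, $(4)$ holds, i.e.\ neither $\Theta$ nor $L$ is a.s.\ zero; I must then show $L$ is square integrable, and that $\Theta$ is square integrable unless $L$ is mean zero (in which case $\Theta$ integrable suffices — and conversely if $X$ is mean zero and square integrable but $L$ is not mean zero, derive a contradiction, forcing $(3)$ or $(4)$). The key tool is Theorem \ref{S:subordiniertes tripel}: the L\'evy measure $\mu$ of $X$ contains the term $\int_{\mathbb R_+} P^{L(\theta)} F(d\theta)$ (plus $a_0\nu$), and by Proposition \ref{P:Quadratstruktur} square integrability of $X$ is equivalent to $\int_H |x|^2 \mu(dx)<\infty$. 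Writing this out, $\int_H|x|^2\mu(dx) = a_0\int_H|x|^2\nu(dx) + \int_{\mathbb R_+}\E(|L(\theta)|^2\mathbf 1)F(d\theta)$; using $\E|L(\theta)|^2 = \theta(\tr Q + \int|x|^2\nu(dx)) + \theta^2|\E L(1)|^2$ when $L$ is square integrable (and $=\infty$ otherwise), one reads off: if $\Theta\ne 0$ (so $a_0>0$ or $F\ne 0$) then finiteness forces $\int_H|x|^2\nu(dx)<\infty$, i.e.\ $L$ square integrable; and then the remaining requirement is $\int_{\mathbb R_+}\theta^2|\E L(1)|^2 F(d\theta)<\infty$, which is automatic if $\E L(1)=0$ (case $(2)$, needing only $\int(1\wedge\theta)F<\infty$, already true, together with the integrability of $\Theta$ which one still has to check — but integrability of $\Theta$ is equivalent to $\int_{\{\theta>1\}}\theta F(d\theta)<\infty$, which here follows because $\E|X(1)|^2<\infty$ already forces at least $\int_{\{\theta>1\}}\theta F(d\theta)<\infty$ via the $\tr Q + \int|x|^2\nu$ coefficient being positive unless $L\equiv 0$), and otherwise forces $\int_{\{\theta>1\}}\theta^2 F(d\theta)<\infty$, i.e.\ $\Theta$ square integrable (case $(1)$). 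The mean-zero refinement: if $X$ is mean zero then $\E X(1)=\E\Theta(1)\E L(1)=0$, so $\E\Theta(1)=0$ (giving $(3)$) or $\E L(1)=0$ (giving $(2)$), using that $\Theta$ is already known integrable.

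\textbf{The main obstacle} I anticipate is the bookkeeping around the degenerate subcases — carefully handling when $a_0=0$ versus $F=0$, ensuring "$\Theta$ not a.s.\ zero" genuinely produces the positive multiplier needed to transfer square integrability from $X$ to $L$, and the symmetric issue of ruling out that $L$ itself is a.s.\ constant zero before concluding. Once the L\'evy-measure computation via Theorem \ref{S:subordiniertes tripel} and Proposition \ref{P:Quadratstruktur} is set up, the coefficient-matching is mechanical; the moment formulas in the two displayed cases are then just restatements of Lemma \ref{L:Erw und Varianz} and Lemma \ref{L:Quadrat Martingal} reassembled over $j=1,\dots,d$.
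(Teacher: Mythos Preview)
Your proposal is correct and follows essentially the same route as the paper's proof: the reduction to $d=1$ via $|X(1)|^2=\sum_j|X_j(1)|^2$, sufficiency from Lemmas~\ref{L:Quadrat Martingal} and~\ref{L:Erw und Varianz}, and necessity by reading off $\int_H|x|^2\mu(dx)<\infty$ from Theorem~\ref{S:subordiniertes tripel} and Proposition~\ref{P:Quadratstruktur}, then splitting into the cases $\E L(1)\neq 0$, $\E L(1)=0$ with nonzero variance, and $L\equiv 0$. The only cosmetic difference is that the paper organises the necessity as a forward case analysis (Case~1: $L$ not square integrable forces $a_0=0$ and $F=0$; Case~2: $L$ square integrable, subcases on $m,v$), whereas you exclude (3) and (4) first and argue contrapositively; the content is identical, including the mean-zero refinement via $\E X(1)=\E\Theta(1)\,\E L(1)$.
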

\begin{proof}
The first statement follows directly from the equation $\vert X(1)\vert^2 = \sum_{j=1}^d \vert X_j(1)\vert^2$. The formulas at the end of the Theorem follow from the two previous Lemmas.  For the characterisation of square integrability of $X_j$ we can assume w.l.o.g.\ that $d=1$.

The if part follows from the two previous Lemmas. Assume that $X$ is square integrable. Let $(\beta,\Gamma,\mu)$ be the characteristics of $X$ as given in Theorem \ref{S:subordiniertes tripel}. Proposition \ref{P:Quadratstruktur} yields
\begin{eqnarray*}
  \infty &>& \int_H \vert x\vert^2 \mu(dx) \\
   &=& \int_{H} \vert x\vert^2 (a_0\nu)(dx) + \int_0^\infty\E(\vert L(\theta)\vert^2)F(d\theta).
\end{eqnarray*}
Thus $\int_0^\infty\E(\vert L(\theta)\vert^2)F(d\theta)<\infty$ and $\int_{H} \vert x\vert^2 (a_0\nu)(dx)<\infty$.

{\em Case 1:} $L$ is not square integrable. Then Proposition \ref{P:Quadratstruktur} implies that $\int_{H} \vert x\vert^2 \nu(dx)=\infty$. Thus $F=0$ and $a_0=0$. Hence $\Theta=0$ a.s.\ which is statement (3).

{\em Case 2:} $L$ is square integrable. Let $v:=\E(\vert L(1)-\E L(1)\vert^2)$ and $m:=\E L(1)$. Then $\E(\vert L(\theta)\vert^2) = \theta v+\theta^2\vert m\vert^2$ for any $\theta\geq0$. Hence we have
\begin{eqnarray*}
 \int_0^\infty \theta vF(d\theta)&<&\infty\quad\text{and} \\
 \int_0^\infty \theta^2\vert m\vert^2F(d\theta)&<&\infty,
\end{eqnarray*}

{\em Case 2.1:} $m\neq0$. Then $\int_0^\infty \theta^2F(d\theta)<\infty$. Hence \cite[Corollary 25.8]{sato.99} yields that $\Theta$ is square integrable.

{\em Case 2.2:} $m=0,v\neq0$. Then $L$ is mean zero and $\int_0^\infty \theta F(d\theta)<\infty$. Hence \cite[Corollary 25.8]{sato.99} yields that $\Theta$ is integrable. Thus we have statement (2).

{\em Case 2.3:} $m=0,v=0$. Since $0=v=\E(\vert L(1)\vert^2)$ we have $L=0$ a.s.

Lemma \ref{L:Quadrat Martingal} yields that if (2), (3) or (4) holds, then $X$ is mean zero and square integrable. If $X$ is mean zero and square integrable and (1) holds, then we have
$$0 = \E(X(1)) = \E\Theta(1)\E L(1).$$
Thus $\E\Theta(1)=0$ which yields (3) or $\E L(1)=0$ which implies (2).
\end{proof}

Theorem \ref{S:charakterisierung quadint} above is a complete characterisation of the second order structure of the L\'evy process $X$. However, there are L\'evy processes without finite second moment (e.g.\ see Theorem \ref{S:asLP} below). In that case the first order structure and the martingale property are still interesting. We now develop necessary and sufficient conditions for the existence of a first moment (cf.\ Theorem \ref{S:charakterisierung von Integrierbarkeit}) and we give a condition that suffices to show that $X$ is a martingale. Corollary \ref{C:int im Guasschen fall} is a restatement of Theorem \ref{S:charakterisierung von Integrierbarkeit} for the special case that $L$ is a Brownian motion without drift.

\begin{lem}\label{L:erstes Moment}
 Let $L$ and $\Theta$ be integrable. Then $X$ is integrable and $$\E X(1)=\E\Theta(1)\E L(1).$$
\end{lem}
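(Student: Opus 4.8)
The plan is to condition on the independent subordinator $\Theta$ and thereby reduce the statement to a linear growth bound for the map $\theta\mapsto\E|L(\theta)|$. Since $L$ and $\Theta$ are independent, for every nonnegative (or integrable) Borel function $\Phi$ on $H$ one has $\E(\Phi(X(1)))=\int_{\mathbb R_+^d}\E(\Phi(L(\theta)))\,P^{\Theta(1)}(d\theta)$. Taking $\Phi(x)=|x|$ reduces integrability of $X(1)$ to finiteness of $\int_{\mathbb R_+^d}g(\theta)\,P^{\Theta(1)}(d\theta)$, where $g(\theta):=\E|L(\theta)|$; this integral is finite once $g$ grows at most linearly and $\Theta(1)$ is integrable.

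So the core step is to show $g(\theta)\le C(1+|\theta|)$ for some $C>0$. By the triangle inequality $g(\theta)\le\sum_{j=1}^d h_j(\theta_j)$ with $h_j(s):=\E|L_j(s)|<\infty$ (each $L_j$ being integrable). Using stationary, independent increments together with the triangle inequality, each $h_j$ is subadditive, $h_j(s+t)\le h_j(s)+h_j(t)$; and $h_j$ is bounded on $[0,1]$ since $\E\sup_{s\in[0,1]}|L_j(s)|<\infty$ for an integrable L\'evy process --- the standard maximal estimate obtained by splitting $L_j$ into its Gaussian part, its compensated small-jump $L^2$-martingale part (handled by Doob) and its finite-activity large-jump part. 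Writing $s=\lfloor s\rfloor+\{s\}$ and iterating subadditivity gives $h_j(s)\le s\,h_j(1)+\sup_{[0,1]}h_j\le(1+s)K_j$ with $K_j:=\sup_{s\in[0,1]}h_j(s)<\infty$; summing over $j$ yields the bound. Equivalently, one may simply quote here the growth lemma of the Appendix, in the spirit of Lemma~\ref{L:Wachstum chi} and Lemma~\ref{L:Wachstum g}.

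With this bound, $\int_{\mathbb R_+^d}g\,dP^{\Theta(1)}\le C(1+\E|\Theta(1)|)<\infty$, so $X(1)$ is integrable; running the same argument with $\Theta(t)$ in place of $\Theta(1)$ --- integrable because $\E\Theta_j(t)=t\,\E\Theta_j(1)<\infty$ --- shows every $X(t)$ is integrable, hence the L\'evy process $X$ from Theorem~\ref{S:Fourier-Darstellung} is integrable. For the identity, condition on $\Theta$ once more: for fixed $\theta$ the coordinatewise formula $\E L_j(s)=s\,\E L_j(1)$ gives $\E L(\theta)=(\theta_1\E L_1(1),\dots,\theta_d\E L_d(1))=\theta\,\E L(1)$, and since integrability now justifies exchanging the $H$-valued expectation with the conditional expectation, $\E X(1)=\E(\Theta(1)\,\E L(1))$, whose $j$-th coordinate equals $\E(\Theta_j(1))\,\E L_j(1)$; this is precisely $\E\Theta(1)\,\E L(1)$ in the componentwise-scaling notation $au=(a_1u_1,\dots,a_du_d)$ from the preliminaries.

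I expect the only genuine obstacle to be the uniform bound of $h_j$ over $[0,1]$: subadditivity by itself does not imply boundedness on bounded sets, so one really needs the $L^1$ maximal inequality for integrable L\'evy processes (or, equivalently, the L\'evy--It\^o decomposition) to control $\sup_{s\in[0,1]}|L_j(s)|$. The remaining ingredients --- the two conditioning arguments, the additivity $\E L_j(s)=s\,\E L_j(1)$, and the bookkeeping with the componentwise scaling on $H$ --- are routine.
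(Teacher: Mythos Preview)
Your argument is correct and follows the same skeleton as the paper: condition on $\Theta$ to reduce integrability of $X(1)$ to $\E f(\Theta(1))<\infty$ for the growth function $f(\theta)=\E|L(\theta)|$, establish a linear bound $f(\theta)\le C(1+|\theta|)$, and then read off the formula $\E X(1)=\E\Theta(1)\,\E L(1)$ by a second conditioning. The paper packages the conditioning step as Lemma~\ref{L:X integrierbar} and the growth bound as Lemma~\ref{L:schranke der Wachstumsfunktion}; the latter is the correct Appendix reference, not Lemmas~\ref{L:Wachstum chi} or~\ref{L:Wachstum g}, which concern different estimates.

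The only genuine difference is how the linear growth is obtained. The paper's Lemma~\ref{L:schranke der Wachstumsfunktion} applies the L\'evy--It\^o decomposition directly to $\E|L(\theta)|$ and bounds each summand, yielding the sharper two-scale estimate $\E|L(\theta)|\le C_1|\theta|+C_2|\theta|^{1/2}$. You instead exploit subadditivity of $s\mapsto\E|L_j(s)|$ together with the $L^1$ maximal bound $\E\sup_{s\le1}|L_j(s)|<\infty$ (itself proved via L\'evy--It\^o and Doob) to get boundedness on $[0,1]$ and hence $h_j(s)\le(1+s)K_j$. Both routes pass through the L\'evy--It\^o decomposition, so neither is more elementary; the paper's version gives a slightly finer bound but for this lemma either suffices.
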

\begin{proof}
 If $X$ is integrable, then conditioning on $\Theta$ yields the formula.

Let $f$ be the growth function of $L$, cf.\ Definition \ref{D:growth function}. Then Lemma \ref{L:schranke der Wachstumsfunktion} yields that there is $C>0$ and $f(\theta)\leq 1+C\vert \theta\vert$ for any $\theta\in\mathbb R_+^d$. Lemma \ref{L:X integrierbar} yields that $X$ is integrable if $f(\Theta(1))$ is integrable. However,
$$\E f(\Theta(1))\leq 1+C\E\vert\Theta(1)\vert < \infty.$$
\end{proof}

We have seen that the martingale property of $L$ allows to put weaker assumptions on $\Theta$ to ensure that $X$ is square integrable. If $L$ is a square integrable martingale, then similar as before a weaker assumption than in Lemma \ref{L:erstes Moment} on $\Theta$ is sufficient to ensure integrability of $X$.
\begin{thm}\label{S:X Martingal}
 Let $L$ be a square integrable martingale and assume that $\sqrt{\vert\Theta(1)\vert}$ is integrable (or equivalently $\int_{\vert\theta\vert>1}\sqrt{\vert \theta\vert}F(d\theta)<\infty$). Then $X$ is integrable and mean zero.
\end{thm}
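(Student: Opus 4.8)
The plan is to reduce the claim to a conditioning argument on $\Theta$, the point being that the martingale hypothesis makes $\E(\vert L(\theta)\vert)$ grow only like $\sqrt{\vert\theta\vert}$, which is precisely the order matched by the hypothesis that $\sqrt{\vert\Theta(1)\vert}$ is integrable.

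First I would record a growth bound for $L$. Since $L$ is a square integrable martingale it is mean zero (by the Remark characterising the martingale property of $L$), so each coordinate $L_j$ is mean zero and square integrable, and Proposition~\ref{P:Quadratstruktur} gives $\E(\vert L_j(t)\vert^2)=t\,v_j$ with $v_j:=\tr(Q_j)+\int_{H_j}\vert x\vert^2\,\nu_j(dx)<\infty$. Hence, for $\theta\in\mathbb R_+^d$,
$$\E(\vert L(\theta)\vert^2)=\sum_{j=1}^d\E(\vert L_j(\theta_j)\vert^2)=\sum_{j=1}^d\theta_j v_j\leq C^2\vert\theta\vert$$
for a suitable $C>0$ (equivalence of norms on $\mathbb R^d$), and therefore $g(\theta):=\E(\vert L(\theta)\vert)\leq\big(\E(\vert L(\theta)\vert^2)\big)^{1/2}\leq C\sqrt{\vert\theta\vert}$ by the Cauchy--Schwarz inequality. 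This is the step that uses the martingale assumption essentially: without it, the computation in the proof of Theorem~\ref{S:charakterisierung quadint} shows that $\E(\vert L(\theta)\vert^2)$ grows quadratically rather than linearly in $\theta$, giving only the linear bound $g(\theta)\lesssim\vert\theta\vert$, for which one would need $\Theta(1)$ itself to be integrable.

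Next I would condition on $\Theta$. Writing $\mu_t$ for the law of $\Theta(t)$ on $\mathbb R_+^d$ and using that $L$ and $\Theta$ are independent, Tonelli's theorem gives
$$\E(\vert X(t)\vert)=\int_{\mathbb R_+^d}g(\theta)\,\mu_t(d\theta)\leq C\int_{\mathbb R_+^d}\sqrt{\vert\theta\vert}\,\mu_t(d\theta)=C\,\E\big(\sqrt{\vert\Theta(t)\vert}\big),$$
so it suffices to show $\E(\sqrt{\vert\Theta(t)\vert})<\infty$ for every $t\geq0$. The function $\theta\mapsto(1+\vert\theta\vert)^{1/2}$ is submultiplicative and continuous, and $\Theta(t)$ has L\'evy measure $tF$; hence by \cite[Theorem 25.3]{sato.99}, for every $t\geq0$ one has $\E((1+\vert\Theta(t)\vert)^{1/2})<\infty$ as soon as $\int_{\{\vert\theta\vert>1\}}(1+\vert\theta\vert)^{1/2}\,F(d\theta)<\infty$. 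Since $\vert\theta\vert^{1/2}\leq(1+\vert\theta\vert)^{1/2}\leq1+\vert\theta\vert^{1/2}$, this last condition is equivalent both to $\int_{\{\vert\theta\vert>1\}}\sqrt{\vert\theta\vert}\,F(d\theta)<\infty$ and to integrability of $\sqrt{\vert\Theta(1)\vert}$, which proves that $X$ is integrable as well as the parenthetical equivalence in the statement.

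Finally, for mean zero I would condition on $\Theta$ once more: since $\E(L(\theta))=(\E L_1(\theta_1),\dots,\E L_d(\theta_d))=0$ for each $\theta\in\mathbb R_+^d$ and $\int_{\mathbb R_+^d}\E(\vert L(\theta)\vert)\,\mu_t(d\theta)<\infty$ by the previous paragraph, Fubini's theorem (for the Bochner integral) yields $\E(X(t))=\int_{\mathbb R_+^d}\E(L(\theta))\,\mu_t(d\theta)=0$ for all $t\geq0$. The only genuinely substantive point is the growth estimate $g(\theta)\leq C\sqrt{\vert\theta\vert}$ and the recognition that it is exactly sharp enough to be paired with the hypothesis on $\sqrt{\vert\Theta(1)\vert}$; the remaining steps are routine Fubini together with the standard submultiplicative moment criterion of Sato.
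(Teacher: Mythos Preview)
Your proof is correct and follows essentially the same route as the paper: you establish the growth bound $\E\vert L(\theta)\vert\leq C\sqrt{\vert\theta\vert}$ (the paper packages this as Lemma~\ref{L:schranke der Wachstumsfunktion Martingalfall}), then condition on $\Theta$ (the paper's Lemma~\ref{L:X integrierbar}), and invoke the submultiplicative moment criterion for the equivalence (the paper's Proposition~\ref{P:integrierbarkeit}, which is the Hilbert-space version of Sato's Theorem~25.3 you cite directly for the $\mathbb R^d$-valued $\Theta$). The only cosmetic difference is that you inline these lemmas rather than cite them.
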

\begin{proof}
 Proposition \ref{P:integrierbarkeit} yields $\E(\sqrt{\vert\Theta(1)\vert})<\infty$ if and only if $\int_{\vert\theta\vert>1}\sqrt{\vert\theta\vert}F(d\theta)<\infty$.

 Let $f$ be the growth function of $L$ in the sense of Defintion \ref{D:growth function}. Then Lemma \ref{L:schranke der Wachstumsfunktion Martingalfall} yields that there is $C>0$ such that $f(\theta)\leq C\sqrt{\vert\theta\vert}$ for any $\theta\in\mathbb R_+^d$. Thus
$$\E f(\Theta(1))\leq C\E\sqrt{\vert\Theta(1)\vert}<\infty.$$
Lemma \ref{L:X integrierbar} yields that $X$ is integrable. Moreover, $\E(X(1)\vert \Theta) = \Theta(1) \E(L(1)) =0$. Thus $X$ is mean zero.
\end{proof}

\begin{thm}\label{S:charakterisierung von Integrierbarkeit}
Let $\Theta$ be non trivial, i.e.\ $P(\Theta\neq0)>0$. Then $X$ is integrable if and only if $L$ is integrable and $\int_{\vert\theta\vert>1} \E(\vert L(\theta)\vert)F(d\theta) <\infty$. If $L$ and $\Theta$ (and hence $X$) are integrable, then
$$\E X(1) =\E L(1)\E\Theta(1).$$
If $X_j$ is integrable but $\Theta_j$ is not integrable, then $X_j$ is mean zero where $j\in\{1,\dots,d\}$.
\end{thm}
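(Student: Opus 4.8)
The plan is to reduce everything to the one-dimensional case and then exploit the additivity structure, using the already-proven Lemma~\ref{L:erstes Moment} and the characteristics formula of Theorem~\ref{S:subordiniertes tripel}.

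First I would prove the characterisation of integrability. Recall that an $H$-valued L\'evy process is integrable if and only if $\int_{\{|x|>1\}}|x|\,\mu(dx)<\infty$ where $\mu$ is its L\'evy measure. Using the formula $\mu(A)=\sum_j a_{0,j}\nu_j^{\eta_j}(A)+\int_{\mathbb R_+^d}P^{L(\theta)}(A)F(d\theta)$ from Theorem~\ref{S:subordiniertes tripel}, I would split $\int_{\{|x|>1\}}|x|\,\mu(dx)$ into the $a_0\nu$-part and the $\widetilde\mu$-part. The $a_0\nu$-part is finite if and only if either $a_0=0$ or $\int_{\{|x|>1\}}|x|\,\nu(dx)<\infty$, i.e.\ (essentially) $L$ integrable. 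The $\widetilde\mu$-part equals $\int_{\mathbb R_+^d}\E(|L(\theta)|\,1_{\{|L(\theta)|>1\}})\,F(d\theta)$, which by the growth bounds on $\theta\mapsto\E|L(\theta)|$ (Lemma~\ref{L:schranke der Wachstumsfunktion}, controlling small $\theta$ by a multiple of $|\theta|$ which is $F$-integrable near $0$) is finite iff $\int_{\{|\theta|>1\}}\E(|L(\theta)|)\,F(d\theta)<\infty$. Since $\Theta$ is non-trivial, $F\ne 0$ or $a_0\ne 0$, so some genuine mass forces $L$ itself to be integrable; then $\int_{\{|\theta|>1\}}\E(|L(\theta)|)F(d\theta)<\infty$ becomes the extra condition. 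Conversely if both hold, reassembling the pieces gives $\int_{\{|x|>1\}}|x|\mu(dx)<\infty$, hence $X$ integrable. The moment formula $\E X(1)=\E L(1)\E\Theta(1)$ when both $L$ and $\Theta$ are integrable is exactly Lemma~\ref{L:erstes Moment}.

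For the final clause — if $X_j$ is integrable but $\Theta_j$ is not, then $X_j$ is mean zero — I would work with $d=1$, writing $X=X_j$, $L=L_j$, $\Theta=\Theta_j$. By the already-established characterisation, integrability of $X$ forces $L$ integrable and $\int_1^\infty\E(|L(\theta)|)\,F(d\theta)<\infty$. Writing $m:=\E L(1)$ and recalling $\E(L(\theta))=\theta m$, I would bound $\E(|L(\theta)|)$ from below: since $L(\theta)-\theta m$ has mean zero, $\E|L(\theta)|\ge|\E L(\theta)|=\theta|m|$ (Jensen). Hence $|m|\int_1^\infty\theta\,F(d\theta)\le\int_1^\infty\E(|L(\theta)|)F(d\theta)<\infty$. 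But $\Theta$ not integrable means, by \cite[Corollary 25.8]{sato.99}, $\int_1^\infty\theta\,F(d\theta)=\infty$. The only way to reconcile these is $m=0$, i.e.\ $\E L(1)=0$. Then conditioning on $\Theta$ gives $\E(X(1)\mid\Theta)=\Theta(1)\,\E L(1)=0$, so $\E X(1)=0$; stationarity and independence of increments upgrade this to $\E X(t)=0$ for all $t$, i.e.\ $X$ is mean zero.

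The main obstacle I anticipate is the careful bookkeeping in the first step: one must be precise about which of "$L$ integrable" versus "$\int_{\{|x|>1\}}|x|\nu(dx)<\infty$" is really needed, handle the degenerate subcases ($a_0=0$ with $F\ne0$, or $F=0$ with $a_0\ne0$), and verify that the small-$\theta$ contribution $\int_{\{|\theta|\le1\}}\E(|L(\theta)|)F(d\theta)$ is automatically finite via the linear growth bound and $\int_{\{|\theta|\le1\}}|\theta|F(d\theta)<\infty$ — so that the only genuine constraint is the tail integral. The growth-function lemmas in the appendix are exactly designed to make this routine, so once they are invoked the argument is a direct computation. The Jensen lower bound $\E|L(\theta)|\ge\theta|\E L(1)|$ in the last paragraph is the crisp observation that makes the non-integrability of $\Theta$ bite, and I expect it to go through without trouble.
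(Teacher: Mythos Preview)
Your overall strategy matches the paper's: both split $\int_{\{|x|>1\}}|x|\,\mu(dx)$ via the formula from Theorem~\ref{S:subordiniertes tripel}, both cite Lemma~\ref{L:erstes Moment} for the moment formula, and both deduce $\E L_j(1)=0$ in the last clause by confronting integrability of $X_j$ with non-integrability of $\Theta_j$. Your Jensen lower bound $\E|L(\theta)|\ge \theta|\E L(1)|$ is a clean variant of the paper's argument, which instead writes $\E(X_j(1)\mid\Theta)=\Theta_j(1)\,\E L_j(1)$ and observes that this conditional expectation of an integrable random variable must itself be integrable, forcing $\E L_j(1)=0$.

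There is, however, a genuine gap in your $\Leftarrow$ direction. You invoke Lemma~\ref{L:schranke der Wachstumsfunktion} to get a \emph{linear} bound $\E|L(\theta)|\le C|\theta|$ near $0$, but that lemma only gives $\E|L(\theta)|\le C_1|\theta|+C_2|\theta|^{1/2}$. The $|\theta|^{1/2}$ term is not $F$-integrable near $0$ in general: for $L$ a mean-zero Brownian motion and $\Theta$ a $\tfrac12$-stable subordinator (so $F(d\theta)=c\,\theta^{-3/2}d\theta$) one has $\E|L(\theta)|=c'\theta^{1/2}$ and $\int_0^1\E|L(\theta)|F(d\theta)=\infty$. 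So the integral $\int_{\{|\theta|\le1\}}\E|L(\theta)|\,F(d\theta)$ that you wish to declare ``automatically finite'' need not be. What \emph{is} always finite (once $L$ is integrable) is the truncated version $\int_{\{|\theta|\le1\}}\E(|L(\theta)|1_{\{|L(\theta)|>1\}})\,F(d\theta)$, but establishing a linear bound for $\theta\mapsto\E(|L(\theta)|1_{\{|L(\theta)|>1\}})$ requires a separate L\'evy--It\^o decomposition argument (split off the large jumps as a compound Poisson piece), not Lemma~\ref{L:schranke der Wachstumsfunktion}.

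The paper sidesteps this entirely: for the $\Leftarrow$ direction it does not work with $\mu$ at all. Instead it notes that the growth function $f(\theta)=\E|L(\theta)|$ is subadditive (Remark~\ref{R:growth function}), applies Proposition~\ref{P:integrierbarkeit} to the subordinator $\Theta$ to convert $\int_{\{|\theta|>1\}}f(\theta)\,F(d\theta)<\infty$ into $\E f(\Theta(1))<\infty$, and then concludes via Lemma~\ref{L:X integrierbar} (conditioning) that $\E|X(1)|=\E f(\Theta(1))<\infty$. This is both shorter and completely avoids the small-$\theta$ issue.
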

\begin{proof}
 Let $(\beta,\Gamma,\mu)$ be the characteristics of $X$ as given in Theorem \ref{S:subordiniertes tripel} and let $f$ be the growth function of $L$, cf.\ Definition \ref{D:growth function}. We have
\begin{eqnarray*}
 && \int_{\{\vert x\vert>1\}}\vert x\vert \mu(dx) 
   = \int_{\{\vert x\vert>1\}}\vert a_0x\vert \nu(dx) + \int_{\mathbb R_+^d} \E(\vert L(\theta)\vert1_{\vert L(\theta)\vert>1})F(d\theta).
\end{eqnarray*}

$\Rightarrow:$ Let $X$ be integrable. Then Proposition \ref{P:integrierbarkeit} yields $\int_{\{\vert x\vert>1\}}\vert x\vert \mu(dx)<\infty$. Thus Proposition \ref{P:integrierbarkeit} implies that $L$ is integrable.

$\Leftarrow:$  Let $L$ be integrable and $\int_{\vert\theta\vert>1} \E(\vert L(\theta)\vert)F(d\theta) <\infty$. Then
$$\int_{\vert \theta\vert>1} f(\theta)F(d\theta) = \int_{\vert\theta\vert>1}\E\vert L(\theta)\vert F(d\theta)<\infty$$
where $f$ denotes the growth function of $L$. Proposition \ref{P:integrierbarkeit} yields $\E f(\Theta(1))<\infty$. Lemma \ref{L:X integrierbar} yields the first claim.

Lemma \ref{L:erstes Moment} yields the formula for the moment above.

Now let $j\in\{1,\dots,d\}$ and assume that $X_j$ is integrable but $\Theta_j$ is not. We have already shown that this implies that $L_j$ is integrable. Let $g(\theta):=\E L_j(\theta)=\theta\E L_j(1)$ for any $\theta\in\mathbb R_+$. Thus
$$\E X_j(1) = \E(\E\left( L_j(\Theta_j(1))\vert \Theta\right)) = \E g(\Theta(1)) =\E\big(\Theta_j(1)\E L_j(1)\big).$$
We see from that equation that $\Theta_j(1)\E L_j(1)$ is integrable. Since $\Theta_j(1)$ is not integrable we conclude that $\E L_j(1)=0$. Hence $\E X_j(1)=0$.
\end{proof}

We are especially interested in the case that $L$ is a Gaussian L\'evy process (i.e.\ a cylindrical Brownian motion). Then the martingale property of $X$ can be characterised easily in terms of a moment condition of $\Theta$. 
\begin{cor}\label{C:int im Guasschen fall}
 Assume that $L$ is Gaussian and mean zero and assume that $\tr(Q_j)\neq0$ for all $j\in\{1,\dots,d\}$. Then $X$ is integrable if and only if $\sqrt{\vert\Theta(1)\vert}$ is integrable. In that case $X$ is mean zero.
\end{cor}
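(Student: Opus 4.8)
The plan is to deduce the Corollary from the two general results of this section: Theorem~\ref{S:X Martingal} supplies the ``if'' direction, Theorem~\ref{S:charakterisierung von Integrierbarkeit} supplies the ``only if'' direction, and the only extra ingredient needed is a uniform lower bound $\E(\vert L(\theta)\vert)\geq c\sqrt{\vert\theta\vert}$ valid on all of $\mathbb R_+^d$.

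Since $L$ is Gaussian its L\'evy measure vanishes, so $L$ is square integrable by Proposition~\ref{P:Quadratstruktur}; being mean zero, it is then a square integrable martingale (see the Remark at the start of Section~\ref{A:Momente und Martingale}). Consequently, if $\sqrt{\vert\Theta(1)\vert}$ is integrable, Theorem~\ref{S:X Martingal} applies directly and shows that $X$ is integrable and mean zero; this settles the sufficiency and the last sentence of the Corollary in one stroke.

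For the necessity, assume $X$ is integrable. If $\Theta=0$ a.s.\ then $\sqrt{\vert\Theta(1)\vert}=0$ is trivially integrable, so we may assume $\Theta$ is non trivial. Then Theorem~\ref{S:charakterisierung von Integrierbarkeit} yields $\int_{\vert\theta\vert>1}\E(\vert L(\theta)\vert)\,F(d\theta)<\infty$, and by Proposition~\ref{P:integrierbarkeit} it suffices to upgrade this to $\int_{\vert\theta\vert>1}\sqrt{\vert\theta\vert}\,F(d\theta)<\infty$. The heart of the matter is the lower bound. For each $j\in\{1,\dots,d\}$ I would choose a unit vector $e_j\in H_j$ with $\<Q_je_j\vert e_j\>_j>0$, which is possible because $Q_j$ is a nonnegative operator with $\tr(Q_j)\neq0$. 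Since $L$ is centred Gaussian, $L_j(\theta_j)$ is a centred Gaussian $H_j$-valued random variable with covariance operator $\theta_jQ_j$ (Proposition~\ref{P:Quadratstruktur}), so $\<L_j(\theta_j)\vert e_j\>_j$ is a centred real Gaussian with variance $\theta_j\<Q_je_j\vert e_j\>_j$, whence $\E\vert\<L_j(\theta_j)\vert e_j\>_j\vert=\sqrt{2\theta_j\<Q_je_j\vert e_j\>_j/\pi}$. Using $\vert L(\theta)\vert\geq\vert L_j(\theta_j)\vert_j\geq\vert\<L_j(\theta_j)\vert e_j\>_j\vert$, then maximising over $j$ and using $\max_j\theta_j\geq\vert\theta\vert/\sqrt d$, one obtains a constant $c>0$, depending only on $d$ and on $\<Q_1e_1\vert e_1\>_1,\dots,\<Q_de_d\vert e_d\>_d$, such that $\E(\vert L(\theta)\vert)\geq c\sqrt{\vert\theta\vert}$ for every $\theta\in\mathbb R_+^d$. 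Substituting this bound into the integrability statement from Theorem~\ref{S:charakterisierung von Integrierbarkeit} gives $\int_{\vert\theta\vert>1}\sqrt{\vert\theta\vert}\,F(d\theta)<\infty$, and Proposition~\ref{P:integrierbarkeit} then yields the integrability of $\sqrt{\vert\Theta(1)\vert}$.

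The only real work is the uniform lower bound $\E(\vert L(\theta)\vert)\geq c\sqrt{\vert\theta\vert}$; everything else is bookkeeping with the general theorems. This is precisely where the hypothesis $\tr(Q_j)\neq0$ for all $j$ is used: if $Q_j=0$ for some $j$ then $L_j\equiv0$ (no Gaussian part, no jumps), so $X_j\equiv0$ regardless of $\Theta_j$, and the claimed equivalence would fail. One must also not forget to handle the degenerate case $\Theta=0$ a.s.\ separately, since the Corollary, unlike Theorem~\ref{S:charakterisierung von Integrierbarkeit}, does not assume that $\Theta$ is non trivial.
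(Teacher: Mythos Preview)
Your proof is correct and follows essentially the same route as the paper: the ``if'' direction and the mean-zero conclusion come from Theorem~\ref{S:X Martingal}, and the ``only if'' direction comes from Theorem~\ref{S:charakterisierung von Integrierbarkeit} together with a lower bound $\E\vert L(\theta)\vert\geq c\sqrt{\vert\theta\vert}$ and Proposition~\ref{P:integrierbarkeit}. The only cosmetic difference is in how the lower bound is obtained: the paper uses the Gaussian scaling $\E\vert L_j(\theta_j)\vert=\sqrt{\theta_j}\,\E\vert L_j(1)\vert$ directly and then the inequality $\vert L(\theta)\vert\geq d^{-1/2}\sum_j\vert L_j(\theta_j)\vert$, whereas you project each $L_j(\theta_j)$ onto a well-chosen unit vector to reduce to a one-dimensional Gaussian and then maximise over $j$; both give a constant $c>0$ of the required form. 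Your explicit handling of the degenerate case $\Theta=0$ a.s.\ is a nice addition, since Theorem~\ref{S:charakterisierung von Integrierbarkeit} indeed assumes $\Theta$ is non trivial.
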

\begin{proof}
 Theorem \ref{S:X Martingal} implies the if part and the last statement. Let $X$ be integrable. Theorem \ref{S:charakterisierung von Integrierbarkeit} yields $\int_{\vert\theta\vert>1} \E\vert L(\theta)\vert F(d\theta) <\infty$. We also have
\begin{eqnarray*}
\E\vert L(\theta)\vert &\geq& \frac{1}{\sqrt{d}}\sum_{j=1}^d \E\vert L_j(\theta_j)\vert \\
  &=& \frac{1}{\sqrt{d}}\sum_{j=1}^d \theta_j^{1/2}\E\vert L_j(1)\vert \\
 &\geq& \frac{1}{\sqrt{d}}\sqrt{\vert \theta\vert}\min\{\E\vert L_j(1)\vert:j=1,\dots,d\} \\
 &=& \sqrt{\vert\theta\vert}C
\end{eqnarray*}
for any $\theta\in\mathbb R_+^d$ where $C:=\frac{\min\{\E\vert L_j(1)\vert:j=1,\dots,d\}}{\sqrt{d}}>0$. Hence 
$$\int_{\vert\theta\vert>1}\sqrt{\vert\theta\vert}F(d\theta) <\infty.$$
Proposition \ref{P:integrierbarkeit} yields $\sqrt{\vert\Theta(1)\vert}$ is integrable.
\end{proof}
Gaussian L\'evy processes $L$ will play a main role when defining some explicit classes of subordinated L\'evy processes,
which is the topic of the next Section.

\section{Examples and application}
In this Section we construct three classes of subordinated L\'evy processes, extending the popular uni/multi-variate normal inverse 
Gaussian, $\alpha$-stable and variance Gamma L\'evy processes. 

\subsection{Hilbert space valued normal inverse Gaussian process}\label{A:HNIG}
Multivariate normal inverse Gaussian distributions (MNIG-distributions) have been first introduced in \cite{rydberg.97}. These distributions can, of course, also be generated from a multivariate Brownian motion and an inverse Gaussian process by subordination, i.e.\ the subordinated Brownian motion is a process where its marginal distributions are MNIG. We generalise this approach to construct Hilbert space-valued normal inverse Gaussian (HNIG) processes.

\begin{defn}
 A L\'evy process $Y$ is an {\em HNIG-process} if there are $s,c\in\mathbb R_+$, $b\in H$ and a positive semi-definite trace class operator $Q$ on $H$ such that its L\'evy exponent is given by
$$\rho: H\rightarrow\mathbb C, u\mapsto s\left(c-\sqrt{c^2+\<Qu\vert u\>-i2\<u\vert b\>}\right)$$
where $\sqrt{\cdot}$ denotes the main branch of the root function. Here, $(s,c,b,Q)$ are the {\em parameters of the HNIG-process} $Y$. A {\em degenerate HNIG-process} is an HNIG-process where its second parameter is $0$, i.e.\ there are $s\in\mathbb R_+$, $b\in H$ and a positive semi-definite trace class operator $Q$ on $H$ such that $(s,0,b,Q)$ are the parameters of $Y$.
\end{defn}

Let us start with the construction of non-degenerate HNIG processes and discuss some of their properties.
\begin{thm}\label{S:HNIG}
 Let $s,c\in\mathbb R_+$, $c\neq0$, $b\in H$ and $Q$ a positive semi-definite trace class operator on $H$. Then there is an HNIG-process $Y$ with parameters $(s,c,b,Q)$. The characteristics $(\beta,\Gamma,\mu)$ of $Y$ are given by
\begin{eqnarray*}
 \beta &=& \frac{sb}{c}-\int_{\vert x\vert>1}x\mu(dx),\\
 \Gamma &=& 0 \quad\text{and}\\
 \mu(A) &=& \int_0^\infty \Phi_\theta(A) \frac{s}{\sqrt{2\pi \theta^3}}e^{-c^2\theta/2}d\theta
\end{eqnarray*}
for any Borel set $A\subseteq H$ where $\Phi_\theta$ denotes the Gaussian measure on $H$ with mean $\theta b$ and covariance operator $\theta Q$. Moreover, $\E Y(1) = \frac{s}{c}b$ and $\Cov Y(1) = \frac{s b\otimes b}{c^3}+\frac{s}{c}Q$. If $b=0$, then $\beta=0$ and $Y$ is symmetric. The distribution of $TY(t)$ is MNIG in the sense of \cite[Section 10.5]{barndorff.shephard.12} for any bounded linear operator $T$ from $H$ to $\mathbb R^n$ and any $n\in\mathbb N$.
\end{thm}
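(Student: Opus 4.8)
The plan is to realise $Y$ as a subordinated L\'evy process $X=L(\Theta)$ in the sense of Section~\ref{A:Theorie Subordination} with $d=1$. First I would take $L$ to be the $H$-valued Gaussian L\'evy process with characteristics $(b,Q,0)$, which exists because $Q$ is a non-negative self-adjoint trace class operator (cf.\ \cite[Section~4]{peszat.zabczyk.07}) and whose L\'evy exponent is $\phi(u)=i\langle u\vert b\rangle-\tfrac12\langle Qu\vert u\rangle$. Second I would take $\Theta$ to be the real-valued subordinator with drift $a_{0}=0$ and L\'evy measure $F(d\theta)=\frac{s}{\sqrt{2\pi\theta^{3}}}e^{-c^{2}\theta/2}\,d\theta$ on $(0,\infty)$, chosen independent of $L$. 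A standard evaluation of $\int_{0}^{\infty}(e^{z\theta}-1)\frac{1}{\sqrt{2\pi\theta^{3}}}e^{-c^{2}\theta/2}\,d\theta$ shows that the Laplace exponent of $\Theta$ satisfies $\psi(z)=s\bigl(c-\sqrt{c^{2}-2z}\bigr)$ for $z\in\mathbb R_{-}$; since $z\mapsto c^{2}-2z$ maps $\mathbb R_{-}+i\mathbb R$ into the open right half-plane (this uses $c\neq0$), on which the principal square root is holomorphic, uniqueness of analytic continuation together with continuity of $\psi$ up to the boundary gives $\psi(z)=s\bigl(c-\sqrt{c^{2}-2z}\bigr)$ on all of $\mathbb R_{-}+i\mathbb R$ with $\sqrt{\cdot}$ the principal branch.

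With these choices Theorem~\ref{S:Fourier-Darstellung} yields that $X:=L(\Theta)$ is a L\'evy process with L\'evy exponent $\rho(u)=\psi(\phi(u))$; this composition is legitimate since $\phi(u)\in\mathbb R_{-}+i\mathbb R$, and because then $c^{2}-2\phi(u)=c^{2}+\langle Qu\vert u\rangle-2i\langle u\vert b\rangle$ has real part $c^{2}+\langle Qu\vert u\rangle>0$, substitution gives $\rho(u)=s\bigl(c-\sqrt{c^{2}+\langle Qu\vert u\rangle-2i\langle u\vert b\rangle}\bigr)$ with the principal branch, which is exactly the L\'evy exponent of an HNIG-process with parameters $(s,c,b,Q)$; hence $Y:=X$ settles existence. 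The triplet then follows from Theorem~\ref{S:subordiniertes tripel} applied with $d=1$ and $a_{0}=0$: it gives $\Gamma=a_{0}Q=0$ and $\mu(A)=\int_{0}^{\infty}P^{L(\theta)}(A)\,F(d\theta)$, and since $L(\theta)$ is Gaussian with mean $\theta b$ and covariance operator $\theta Q$ we have $P^{L(\theta)}=\Phi_{\theta}$, which is the stated $\mu$.

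For the drift $\beta$ and the moments I would note that $L$, being Gaussian with $\nu=0$, is square integrable with $\E L(1)=b$ and $\Cov(L(1))=Q$ by Proposition~\ref{P:Quadratstruktur}, and that $\Theta$ is square integrable with $\E\Theta(1)=\psi'(0)=s/c$ and variance $\psi''(0)=s/c^{3}$ (all moments of the inverse Gaussian law are finite, e.g.\ because $\int_{1}^{\infty}\theta^{2}F(d\theta)<\infty$ by the exponential factor). Case~(1) of Theorem~\ref{S:charakterisierung quadint}, with the single index $j=1$, then shows $X$ is square integrable with $\E X(1)=\E\Theta(1)\,\E L(1)=\tfrac sc b$ and $\Cov Y(1)=\E(\Theta(1))\Cov(L(1))+\psi''(0)\,b\otimes b=\tfrac sc Q+\tfrac{s}{c^{3}}b\otimes b$, where for $d=1$ the double sum in the covariance formula collapses to the single term $\psi''(0)\,b\otimes b$. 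Since $X$ is integrable, Proposition~\ref{P:Quadratstruktur} also gives $\E X(1)=\beta+\int_{\vert x\vert>1}x\,\mu(dx)$, so comparing with $\E X(1)=\tfrac sc b$ yields $\beta=\tfrac sc b-\int_{\vert x\vert>1}x\,\mu(dx)$. If $b=0$, then $\Phi_{\theta}$ is a centred, hence symmetric, Gaussian measure, so $\mu$ is symmetric and $\int_{\vert x\vert>1}x\,\mu(dx)=0$, forcing $\beta=0$; moreover $\rho(-u)=\rho(u)$, so $-Y$ has the same L\'evy exponent, hence the same law, as $Y$, i.e.\ $Y$ is symmetric.

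Finally, for a bounded linear $T\colon H\to\mathbb R^{n}$ the process $TL$ is an $\mathbb R^{n}$-valued Gaussian L\'evy process with drift $Tb$ and covariance matrix $TQT^{*}$ (its characteristic function being $\exp(t\phi(T^{*}\cdot))$), it is independent of $\Theta$, and $TY=TL(\Theta)$; thus $TY$ is a Brownian motion in $\mathbb R^{n}$ subordinated by an inverse Gaussian subordinator, which by the construction of the multivariate normal inverse Gaussian law (cf.\ \cite{rydberg.97} and \cite[Section~10.5]{barndorff.shephard.12}) means that $TY(t)$ is MNIG-distributed; equivalently one reads this off from $\E e^{i\langle v\vert TY(t)\rangle}=e^{t\rho(T^{*}v)}=\exp\bigl(ts(c-\sqrt{c^{2}+\langle TQT^{*}v\vert v\rangle-2i\langle v\vert Tb\rangle})\bigr)$. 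I expect the only genuinely delicate step to be the first one: checking that the inverse Gaussian subordinator with the prescribed parameters really has L\'evy--Khintchine triplet $(0,0,F)$ with the stated $F$ and that its Laplace exponent extends holomorphically to $s(c-\sqrt{c^{2}-2z})$ on the principal branch; once this is in place, everything else is a routine application of the results of Sections~\ref{A:Theorie Subordination} and~\ref{A:Momente und Martingale}.
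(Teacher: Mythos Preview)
Your proof is correct and follows exactly the paper's approach: realise $Y$ as $L(\Theta)$ with $L$ a drifted $H$-valued Brownian motion and $\Theta$ an inverse Gaussian subordinator, then apply Theorems~\ref{S:Fourier-Darstellung}, \ref{S:subordiniertes tripel} and~\ref{S:charakterisierung quadint} for the exponent, the characteristics and the moments, and project via $T$ for the MNIG claim. The paper's only addition is a closing remark (via Remark~\ref{R:characteristische Funktion}) that any HNIG-process with parameters $(s,c,b,Q)$ shares the law of the constructed $X$, so that the stated characteristics and moments hold for every such $Y$; your argument is otherwise more detailed than the paper's, notably in deriving $\beta$ from $\E X(1)$ and in handling the $b=0$ symmetry claim, which the paper's proof does not address explicitly.
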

\begin{rem}
 In the theorem above the requirement $c\neq0$ is not needed to ensure existence. However, the resulting degenerate HNIG-process will behave differently, cf.\ Proposition \ref{P:degeneriert Cauchy} below.
\end{rem}
\begin{proof}[Proof of Theorem \ref{S:HNIG}]
 Let $L$ be a Brownian motion with drift $b$ and covariance operator $Q$. Let $\Theta$ be an inverse Gaussian process with parameters $s,d$, i.e.\ it is a pure-jump subordinator and its L\'evy measure is given by
$$ F(d\theta) = \frac{s}{\sqrt{2\pi \theta^3}}e^{-c^2\theta/2}1_{\{\theta>0\}}d\theta,$$
cf.\ \cite[Example 7.25]{barndorff.shephard.12}.
Then its Laplace exponent is given by
$$\psi:\mathbb R_-+i\mathbb R\rightarrow\mathbb C, v\mapsto s\left(c-\sqrt{c^2-2v}\right)$$
where $\sqrt{\cdot}$ denotes the main branch of the root function. Theorem \ref{S:Fourier-Darstellung} yields that the L\'evy exponent of the L\'evy process $X(t):=L(\Theta(t))$ is 
$$\rho: H\rightarrow\mathbb C, u\mapsto s\left(c-\sqrt{c^2+\<Qu\vert u\>-i2\<u\vert b\>}\right).$$
Theorem \ref{S:subordiniertes tripel} yields that $(\beta,\Gamma,\mu)$ as defined as above is the characteristics of $X$. Theorem \ref{S:charakterisierung quadint} implies that $X$ is square integrable and that its expectation and its covariance operator are given as above. Let $n\in\mathbb N$, $t\in\mathbb R_+$ and $T:H\rightarrow\mathbb R^n$ be bounded and linear. Then $T(X(t)) = (T\circ L)(\Theta(t))$. $W:=(T\circ L)$ is a Gaussian process on $\mathbb R^n$ with drift $Tb$ and covariance operator $TQT^*$ where $T^*$ denotes the dual operator of $T$. Hence $TX(t) = W(\Theta(t))$ and consequently its distribution is MNIG, cf.\ \cite[Section 10.5]{barndorff.shephard.12}.

Let $Y$ be any HNIG-process with parameters $(s,c,b,Q)$. Then Remark \ref{R:characteristische Funktion} yields that $X$ and $Y$ have the same distribution and hence they have the same moments. Since $X$ and $Y$ have the same characteristic function they have the same characteristics.
\end{proof}

In order to construct degenerate HNIG-processes we use a different subordinator, name\-ly $0.5$-stable subordinator. Subordination of Brownian motion with an $\alpha$-stable subordinator will be investigated in section \ref{A:alpha stabil} in more detail.
\begin{prop}\label{P:degeneriert Cauchy}
 Let $s\in\mathbb R_+$, $b\in H$ and $Q$ a positive semi-definite trace class operator on $H$. Then there is an HNIG-process $C$ with parameters $(s,0,b,Q)$. $C$ is not integrable.
\end{prop}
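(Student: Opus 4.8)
The proposition has two parts: existence of an HNIG-process $C$ with parameters $(s,0,b,Q)$, and non-integrability of $C$. For existence, I would proceed exactly as in the proof of Theorem~\ref{S:HNIG}, but replacing the inverse Gaussian subordinator with a $0.5$-stable subordinator. Concretely, let $L$ be a Brownian motion on $H$ with drift $b$ and covariance operator $Q$, and let $\Theta$ be a $1/2$-stable subordinator, i.e.\ a pure-jump subordinator whose Laplace exponent is $\psi(v) = -s'\sqrt{-2v}$ for $v \in \mathbb R_- + i\mathbb R$ (choosing the scale constant $s'$ so that the composition yields the exponent $-s\sqrt{\langle Qu\vert u\rangle - i2\langle u\vert b\rangle}$; this is the $c\to 0$ limit of the inverse Gaussian Laplace exponent $s(c-\sqrt{c^2-2v})$ after absorbing constants). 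Then Theorem~\ref{S:Fourier-Darstellung} gives that $X(t):=L(\Theta(t))$ is a L\'evy process with L\'evy exponent $\rho(u) = -s\sqrt{\langle Qu\vert u\rangle - i2\langle u\vert b\rangle} = s(0 - \sqrt{0^2 + \langle Qu\vert u\rangle - i2\langle u\vert b\rangle})$, which is precisely the HNIG-exponent with parameters $(s,0,b,Q)$. As in the proof of Theorem~\ref{S:HNIG}, Remark~\ref{R:characteristische Funktion} then shows any process with this exponent has the same law as $X$, so such a process exists; one should also record the characteristics via Theorem~\ref{S:subordiniertes tripel}, with $F(d\theta) = \mathrm{const}\cdot\theta^{-3/2}1_{\{\theta>0\}}d\theta$ in place of the inverse Gaussian density.

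For the non-integrability, the key observation is that the $1/2$-stable subordinator $\Theta$ is \emph{not integrable}: $\int_{\{\theta>1\}}\theta F(d\theta) = \mathrm{const}\int_1^\infty \theta\cdot\theta^{-3/2}d\theta = \mathrm{const}\int_1^\infty\theta^{-1/2}d\theta = \infty$. I would then use Theorem~\ref{S:charakterisierung von Integrierbarkeit} in the contrapositive. That theorem states $X$ is integrable if and only if $L$ is integrable and $\int_{\{\vert\theta\vert>1\}}\E(\vert L(\theta)\vert)F(d\theta)<\infty$. The Brownian motion $L$ is integrable, so I need to show the second condition fails. Since $L(\theta)$ is Gaussian on $H$ with mean $\theta b$ and covariance $\theta Q$, we have $\E\vert L(\theta)\vert \geq \vert\E L(\theta)\vert = \theta\vert b\vert$ if $b\neq 0$, giving $\int_{\{\theta>1\}}\E\vert L(\theta)\vert F(d\theta)\geq \vert b\vert\int_{\{\theta>1\}}\theta F(d\theta)=\infty$. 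If $b=0$, then $L(\theta)$ is centered Gaussian with covariance $\theta Q$, and $\E\vert L(\theta)\vert = \sqrt\theta\,\E\vert L(1)\vert$ by scaling of Gaussian vectors; provided $Q\neq 0$ this gives $\int_{\{\theta>1\}}\sqrt\theta F(d\theta) = \mathrm{const}\int_1^\infty\theta^{1/2}\theta^{-3/2}d\theta=\mathrm{const}\int_1^\infty\theta^{-1}d\theta=\infty$. Hence in all nondegenerate cases $X$ is not integrable. The borderline case $b=0$, $Q=0$ makes $L\equiv 0$, hence $C\equiv 0$, which \emph{is} integrable — so strictly speaking the statement should carry the hypothesis $(b,Q)\neq(0,0)$; I would either assume this is implicitly understood (a "process" being nontrivial) or add the remark.

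\textbf{Main obstacle.} The calculations themselves are routine; the real care is in two places. First, correctly identifying the $1/2$-stable subordinator and its scaling constant so that the composed L\'evy exponent matches the HNIG-exponent with $c=0$ exactly — this is a matter of bookkeeping with the Laplace exponent $\psi(v)=s(c-\sqrt{c^2-2v})$ and observing it degenerates to $-s\sqrt{-2v}$ as $c\downarrow 0$, together with checking that $\rho(u)=\psi(\phi(u))$ with $\phi(u)=i\langle u\vert b\rangle - \tfrac12\langle Qu\vert u\rangle$ gives the stated formula. Second, invoking Theorem~\ref{S:charakterisierung von Integrierbarkeit} requires $\Theta$ to be \emph{non trivial} ($P(\Theta\neq0)>0$), which holds for the $1/2$-stable subordinator, so the hypothesis is satisfied. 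Apart from the degenerate-degenerate case noted above, no genuine difficulty arises.
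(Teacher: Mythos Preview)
Your proposal is correct and follows essentially the same route as the paper: subordinate a Brownian motion with drift $b$ and covariance $Q$ by a $1/2$-stable subordinator, identify the L\'evy exponent via Theorem~\ref{S:Fourier-Darstellung}, and then argue non-integrability through the moment criteria of Section~\ref{A:Momente und Martingale}. The only cosmetic difference is that for the case $b=0$ the paper invokes Corollary~\ref{C:int im Guasschen fall} (observing that $\sqrt{\Theta(1)}$ is not integrable) rather than computing $\int_{\{\theta>1\}}\E\vert L(\theta)\vert\,F(d\theta)$ directly via Theorem~\ref{S:charakterisierung von Integrierbarkeit} as you do; but the proof of that corollary is precisely your scaling computation $\E\vert L(\theta)\vert=\sqrt{\theta}\,\E\vert L(1)\vert$, so the two arguments are the same at one remove. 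Your explicit treatment of both cases through Theorem~\ref{S:charakterisierung von Integrierbarkeit} is arguably cleaner. Your remark about the trivial case $(b,Q)=(0,0)$ (and one could add $s=0$) is a valid caveat that the paper's statement and proof silently ignore.
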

\begin{proof}
 Let $L$ be a Brownian motion with drift $b$ and covariance operator $Q$. Let $s\geq0$ and $\Theta$ be the $0.5$-stable subordinator with L\'evy measure
$$F(d\theta) = s\frac{\theta^{-1.5}}{\Gamma(-0.5)}d\theta.$$
Then its Laplace exponent is given by
$$\psi:\mathbb R_-+i\mathbb R\rightarrow\mathbb C, v\mapsto \begin{cases}s\exp(-0.5\Log(-v))&v\neq0,\\0&v=0\end{cases}$$
where $\Log$ denotes the main branch of the logarithm. Theorem \ref{S:Fourier-Darstellung} yields that the L\'evy exponent of the L\'evy process $X(t):=L(\Theta(t))$ is given by
$$\rho(u) = \psi(\phi(u)) = s\sqrt{-\phi(u)}$$
as desired. Observe that $\sqrt{\Theta(1)}$ is not integrable. Hence Corollary \ref{C:int im Guasschen fall} yields the claim if $b=0$ and Theorem \ref{S:charakterisierung von Integrierbarkeit} yields the claim if $b\neq0$.
\end{proof}

\begin{prop}
 Let $Y$ be a process on $H$. Then $Y$ is a HNIG-process if and only if $TY$ is an MNIG-process for every finite dimensional operator $T$ on $H$.
\end{prop}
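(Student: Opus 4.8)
The plan is to prove the two implications separately, using the characterisation of HNIG-processes via their Lévy exponent together with the MNIG structure established in Theorem~\ref{S:HNIG}. Write $\rho$ for the Lévy exponent of $Y$, so that $\E e^{i\<u\vert Y(1)\>}=e^{\rho(u)}$ for all $u\in H$; by Remark~\ref{R:characteristische Funktion} the law of $Y(1)$ — and by the Lévy property the law of the whole process — is determined by $\rho$.

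\emph{The forward direction} is essentially a restatement of the last sentence of Theorem~\ref{S:HNIG}: if $Y$ is an HNIG-process with parameters $(s,c,b,Q)$, then $TY$ has Lévy exponent $u\mapsto\rho_T(u):=s(c-\sqrt{c^2+\<TQT^*u\vert u\>_{\mathbb R^n}-2i\<u\vert Tb\>_{\mathbb R^n}})$ for every finite-dimensional operator $T\colon H\to\mathbb R^n$ (one computes $\rho_T(u)=\rho(T^*u)$ directly from the definition, noting that $\<Q T^* u\vert T^* u\>=\<TQT^*u\vert u\>$ and $\<T^*u\vert b\>=\<u\vert Tb\>$). If $c\neq0$ this is, by definition / by \cite[Section~10.5]{barndorff.shephard.12}, the Lévy exponent of an MNIG-process on $\mathbb R^n$ with parameters $(s,c,Tb,TQT^*)$; if $c=0$ the degenerate case must be addressed, and here I would invoke Proposition~\ref{P:degeneriert Cauchy} (or rather its finite-dimensional analogue, subordination of Brownian motion on $\mathbb R^n$ by the $0.5$-stable subordinator) to identify $TY$ as the corresponding degenerate MNIG-process. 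One must be a little careful that the notion of MNIG used as the target allows $c=0$; if it does not, the statement should be read as concerning non-degenerate HNIG-processes, and I would note this.

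\emph{The converse direction} is the substantive one. Suppose $TY$ is an MNIG-process for every finite-dimensional $T$. The strategy is: first, from one-dimensional projections recover the candidate parameters $(s,c,b,Q)$; then show $\rho$ has the required HNIG-form on all of $H$. Applying the hypothesis with rank-one operators $T=\<\cdot\vert e\>$ for unit vectors $e\in H$ gives that $\<Y(1)\vert e\>$ is univariate NIG; its four parameters depend on $e$, and comparing the NIG Lévy exponents for various $e$ lets one read off that there exist $s,c\ge0$, $b\in H$, and a bounded positive semi-definite bilinear form $q$ on $H$ with $\rho(ue)=s(c-\sqrt{c^2+u^2 q(e,e)-2iu\<e\vert b\>})$; applying $T$ to two-dimensional spans and using the known MNIG exponent shows $q(e,f)$ is genuinely a (polarised) quadratic form, i.e.\ $q(e,f)=\<Qe\vert f\>$ for a bounded self-adjoint $Q\ge0$. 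The remaining points are (i) upgrading $Q$ from bounded to trace class, and (ii) passing from finite-dimensional subspaces to all of $H$. For (i): choose $T=P_n$ the orthogonal projection onto $\mathrm{span}(e_1,\dots,e_n)$ for an orthonormal basis; $P_nY$ is MNIG hence square-integrable with $\Cov(P_nY(1))=\tfrac{s}{c}P_nQP_n+\tfrac{s}{c^3}(P_nb)\otimes(P_nb)$ (for $c\ne0$), and since a covariance operator on $\mathbb R^n$ has finite trace, $\tr(P_nQP_n)=\sum_{k\le n}\<Qe_k\vert e_k\>$ is bounded uniformly in $n$ (the rank-one piece contributing at most $|b|^2/c^2$), which forces $\tr Q=\sum_k\<Qe_k\vert e_k\><\infty$; in the degenerate case $c=0$ one argues instead from tightness / the explicit form of the Lévy measure restricted to finite-dimensional subspaces. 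For (ii): $\rho$ is continuous on $H$ (being a Lévy exponent of an $H$-valued Lévy process, cf.\ \cite[Theorem~4.27]{peszat.zabczyk.07}), and for each $u\in H$ one applies the hypothesis with $T=P_n$ where $P_n\to I$ strongly with $u$ in the range, getting $\rho(u)=\rho(P_nu)=s(c-\sqrt{c^2+\<QP_nu\vert P_nu\>-2i\<P_nu\vert b\>})$; letting $n\to\infty$ and using $\<QP_nu\vert P_nu\>\to\<Qu\vert u\>$ (valid since $Q$ is trace class, indeed bounded suffices) and $\<P_nu\vert b\>\to\<u\vert b\>$ yields $\rho(u)=s(c-\sqrt{c^2+\<Qu\vert u\>-2i\<u\vert b\>})$, which is exactly the HNIG form, so $Y$ is an HNIG-process with parameters $(s,c,b,Q)$.

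\textbf{Main obstacle.} The delicate step is establishing that $Q$ is trace class and, more generally, that the parameters extracted from the finite-dimensional projections are mutually consistent — i.e.\ that the quadratic form and drift read off from different finite-dimensional subspaces fit together into a single operator $Q$ and vector $b$ on $H$, rather than merely being a coherent family. The consistency is what makes the strong-convergence argument in (ii) legitimate, and it relies on the projective structure of MNIG (restricting an MNIG vector to a coordinate subspace is again MNIG with the obviously restricted parameters), which should be recorded as a lemma or extracted from \cite{barndorff.shephard.12}; handling the degenerate ($c=0$) case uniformly with the non-degenerate case is the secondary technical nuisance.
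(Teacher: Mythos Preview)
The paper's own proof is the single sentence ``This can be simply read from the characteristic function,'' so your treatment is far more detailed than what the paper provides. Your forward direction is correct and is exactly that reading: the L\'evy exponent of $TY$ at $v\in\mathbb R^n$ is $\rho(T^*v)$, which has the MNIG form with parameters $(s,c,Tb,TQT^*)$.

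Your converse strategy --- extract candidate parameters from one- and two-dimensional projections, check their consistency across subspaces, and pass to all of $H$ by continuity of $\rho$ --- is the natural way to flesh out the paper's one-liner, but your argument for the trace-class property of $Q$ has a gap. You write that ``since a covariance operator on $\mathbb R^n$ has finite trace, $\tr(P_nQP_n)=\sum_{k\le n}\<Qe_k\vert e_k\>$ is bounded uniformly in $n$.'' Finiteness of each $\tr(P_nQP_n)$ is automatic (every finite matrix has a trace) and gives no uniform bound; what you would actually need is an $n$-independent bound on $\E\vert P_nY(1)\vert^2$, which amounts to $\E\vert Y(1)\vert^2<\infty$ --- precisely the conclusion you are after. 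The circularity is genuine: an $H$-valued random variable can have every coordinate square-integrable without itself being square-integrable. One way to close the gap is to work with the L\'evy--Khintchine representation of $Y$ on $H$: since $Y$ is $H$-valued its L\'evy measure $\mu$ must satisfy $\int_H(\vert x\vert^2\wedge 1)\,\mu(dx)<\infty$, and comparing with the subordinated-Gaussian form of the finite-dimensional MNIG L\'evy measures forces $\tr(Q)<\infty$ (equivalently, the Gaussian mixing measures $\Phi_\theta$ can only live on $H$ if $Q$ is trace class). This is exactly the obstacle you correctly flagged as the ``main'' one, but it needs a different argument than the covariance bound you gave.

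A smaller gap: the hypothesis says only that $Y$ is a ``process on $H$,'' yet you immediately use its L\'evy exponent. You should first argue that $Y$ is a L\'evy process; independence and stationarity of increments, and stochastic continuity, can be recovered from the finite-dimensional projections (cf.\ the proposition at the end of the first appendix subsection, though note that result already assumes independent increments). The paper's one-line proof does not address this either.
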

\begin{proof}
 This can be simply read from the characteristic function.
\end{proof}

\subsection{$\alpha$-stable Hilbert space valued L\'evy processes}\label{A:alpha stabil}
Stable L\'evy processes have been studied extensively and used in mathematical finance. We refer the reader to the book of Sato~\cite[Chapter 3]{sato.99} for reference. In this section we will investigate some properties of symmetric stable L\'evy processes and construct some of them, see Theorem \ref{S:asLP} below. Here again we make use of subordination and generate them from Brownian motion. Like the finite dimensional case integrability properties of symmetric stable L\'evy processes are related to the index of the process. Many other properties can be derived as in the finite dimensional case, cf.\ Sato~\cite[Chapter 3]{sato.99}.

We also want to point out that CGMY processes (cf.\ \cite{madan.yor.08}) can be constructed by subordinating a Brownian motion with drift with an $\alpha$-stable subordinator. This can be easily generalised to subordination of Hilbert space valued Brownian motions.

Let us first recall the definition of strictly $\alpha$-stable processes.
\begin{defn}
 Let $\alpha\in\mathbb R_+$. A stochastic process $Y$ is a {\em strictly $\alpha$-stable process} if $Y(t^\alpha)$ and $tY(1)$ have the same distribution for any $t\in\mathbb R_+$.
\end{defn}

An explicit construction of stable Hilbert space valued L\'evy processes has been taken out in \cite[Example 4.38]{peszat.zabczyk.07}. We make use of this construction and discuss some properties of them.
\begin{thm}\label{S:asLP}
For each $\alpha\in(0,2]$ and each positive semi-definite trace class operator $Q\neq0$ on $H$ there is a symmetric $H$-valued strictly $\alpha$-stable L\'evy process $Y$ with L\'evy exponent
$$\rho:H\rightarrow\mathbb C,u\mapsto -\<Qu\vert u\>^{\alpha/2}.$$
Such an strictly $\alpha$-stable process is square integrable if and only if $\alpha=2$ and it is integrable and mean zero if and only if $\alpha>1$.

Let $Y$ be a symmetric strictly $\alpha$-stable L\'evy process which is non-trivial, i.e.\ $P(Y\neq0)>0$. Then $\alpha\in(0,2]$. Moreover, there is a symmetric continuous function $f:S_H\rightarrow \mathbb R_+$ such that the characteristic exponent of $Y$ is given by
$$\rho:H\rightarrow\mathbb C,u\mapsto -\vert u\vert^\alpha f\left(\frac{u}{\vert u\vert}\right)$$
where $S_H:=\{x\in H:\vert x\vert=1\}$ denotes the sphere in $H$.
\end{thm}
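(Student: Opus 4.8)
The plan is to prove the three assertions of the theorem—existence of a process with the stated exponent, the integrability dichotomy, and the converse structural description—separately, leaning on the subordination results of Sections~\ref{A:Theorie Subordination}--\ref{A:Momente und Martingale}. For existence I would obtain $Y$ by subordinating a Brownian motion. Fix $\alpha\in(0,2]$ and a positive semi-definite trace class operator $Q\neq0$. When $\alpha=2$, take $Y$ to be the $H$-valued Brownian motion with zero drift and covariance operator $2Q$; its L\'evy exponent is $-\tfrac12\<(2Q)u\vert u\>=-\<Qu\vert u\>^{2/2}$, it is symmetric, and strict $2$-stability follows by comparing the covariance operators of $Y(t^2)$ and $tY(1)$. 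When $\alpha\in(0,2)$, let $L$ be that same Brownian motion, so $\phi(u)=-\<Qu\vert u\>\le0$, and let $\Theta$ be an $(\alpha/2)$-stable subordinator, i.e.\ the pure-jump subordinator with L\'evy measure $F(d\theta)=c\,\theta^{-1-\alpha/2}1_{\{\theta>0\}}\,d\theta$, where $c>0$ is chosen so that the Laplace exponent equals $\psi(v)=-(-v)^{\alpha/2}$ on $\mathbb R_-$; this is a genuine subordinator because $\alpha/2\in(0,1)$. Theorem~\ref{S:Fourier-Darstellung} then gives that $Y:=L(\Theta(\cdot))$ is a L\'evy process with exponent $\rho(u)=\psi(\phi(u))=-(-\phi(u))^{\alpha/2}=-\<Qu\vert u\>^{\alpha/2}$, no branch of the root being needed since $-\phi(u)=\<Qu\vert u\>\ge0$. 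Since $\rho$ is real and even, $Y$ is symmetric; since $\rho(tu)=-(t^2\<Qu\vert u\>)^{\alpha/2}=t^\alpha\rho(u)$ for $t>0$, the characteristic functions of $Y(t^\alpha)$ and $tY(1)$ coincide, so by Remark~\ref{R:characteristische Funktion} these agree in law, i.e.\ $Y$ is strictly $\alpha$-stable; the same remark gives uniqueness in law of any process whose exponent is $-\<Qu\vert u\>^{\alpha/2}$.

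For the integrability dichotomy I would argue on this subordination representation (legitimate by the uniqueness just noted). If $\alpha=2$ then $Y$ is Gaussian, hence square integrable and integrable with mean zero. If $\alpha\in(0,2)$, then in Theorem~\ref{S:charakterisierung quadint} none of the four alternatives can hold: $L\neq0$ as $Q\neq0$; $\Theta\neq0$; and $\int_{\theta>1}\theta\,F(d\theta)=c\int_1^\infty\theta^{-\alpha}\,d\theta=\infty$ (because $\alpha<2$), so $\Theta$ is not integrable—hence $Y$ is not square integrable. As $Y$ is symmetric, integrability of $Y$ forces $\E Y(1)=0$, so it only remains to decide integrability; here Corollary~\ref{C:int im Guasschen fall} applies ($L$ is mean zero Gaussian with $\tr(2Q)\neq0$), giving that $Y$ is integrable iff $\sqrt{\vert\Theta(1)\vert}$ is integrable iff $\int_{\theta>1}\sqrt\theta\,F(d\theta)=c\int_1^\infty\theta^{-(1+\alpha)/2}\,d\theta<\infty$, i.e.\ iff $\alpha>1$. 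Together with the $\alpha=2$ case this yields ``square integrable $\Leftrightarrow\alpha=2$'' and ``integrable and mean zero $\Leftrightarrow\alpha>1$''.

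For the converse, let $Y$ be symmetric and strictly $\alpha$-stable with $P(Y\neq0)>0$ and L\'evy exponent $\rho$. First I would record, from the L\'evy--Khintchine representation of $\rho$ in terms of its characteristics $(\beta,\Gamma,\mu)$ together with $\int_{\vert x\vert\le1}\vert x\vert^2\mu(dx)<\infty$ and $\mu(\{\vert x\vert>1\})<\infty$, that $\rho$ is continuous and obeys $\vert\rho(u)\vert\le C(1+\vert u\vert^2)$ for some $C>0$ (both by dominated convergence). Strict stability gives $e^{\rho(tu)}=e^{t^\alpha\rho(u)}$ for all $t>0$; since $t\mapsto\rho(tu)-t^\alpha\rho(u)$ is continuous, $2\pi i\mathbb Z$-valued, and vanishes at $t=1$, it vanishes identically, so $\rho(tu)=t^\alpha\rho(u)$. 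Symmetry of $Y$ makes $e^{\rho(u)}$ a non-vanishing, real, strictly positive infinitely divisible characteristic function (positivity since $e^{\rho(u)}=(e^{\rho(u)/2})^2$ with $e^{\rho(u)/2}$ real), so $\mathrm{Im}\,\rho$ is a continuous $2\pi\mathbb Z$-valued function on the connected set $H$ vanishing at $0$, hence $\rho$ is real-valued; then $e^{\rho(-u)}=\overline{e^{\rho(u)}}=e^{\rho(u)}$ with both exponents real forces $\rho(-u)=\rho(u)$. Moreover $e^{\rho(u)}=\E(\cos\<Y(1)\vert u\>)\le1$, so $\rho\le0$. Hence $f(v):=-\rho(v)$, $v\in S_H$, is continuous, symmetric, $\mathbb R_+$-valued, and $\rho(u)=\vert u\vert^\alpha\rho(u/\vert u\vert)=-\vert u\vert^\alpha f(u/\vert u\vert)$ for $u\neq0$ (and $\rho(0)=0$). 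Finally, $\alpha>0$, because $\alpha=0$ would yield $\rho(tu)=\rho(u)$ for all $t>0$, hence $\rho\equiv\rho(0)=0$ by continuity and $Y\equiv0$, contradicting non-triviality; and $\alpha\le2$, because $\rho\not\equiv0$ (else again $Y\equiv0$), so some $v_0\in S_H$ has $\rho(v_0)\neq0$, and then $t^\alpha\vert\rho(v_0)\vert=\vert\rho(tv_0)\vert\le C(1+t^2)$ forces $\alpha\le2$ as $t\to\infty$.

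The step I expect to be the main obstacle is the bookkeeping in this last part: upgrading the merely ``modulo $2\pi i$'' identities coming out of equalities of characteristic functions (the homogeneity relation, reality, evenness, $\mathrm{Im}\,\rho\in2\pi\mathbb Z$) into honest identities for the distinguished continuous branch $\rho$ pinned by $\rho(0)=0$, each such step resting on a connectedness-and-continuity argument. Once the homogeneity $\rho(tu)=t^\alpha\rho(u)$ and the quadratic growth bound $\vert\rho(u)\vert=O(\vert u\vert^2)$ are both secured, the restriction $\alpha\in(0,2]$ and the stated form of $\rho$ follow immediately.
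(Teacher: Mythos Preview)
Your proof is correct and, for the construction and the integrability dichotomy, proceeds exactly as in the paper: the same subordination of a mean-zero Brownian motion with covariance $2Q$ by an $\alpha/2$-stable subordinator, the same appeal to Theorem~\ref{S:charakterisierung quadint} for non-square-integrability when $\alpha<2$, and to Corollary~\ref{C:int im Guasschen fall} for the integrability threshold at $\alpha=1$. One arithmetic slip worth noting: with $F(d\theta)=c\,\theta^{-1-\alpha/2}\,d\theta$ you have $\int_{\theta>1}\theta\,F(d\theta)=c\int_1^\infty\theta^{-\alpha/2}\,d\theta$, not $\theta^{-\alpha}$; this still diverges for every $\alpha\le2$, so your conclusion that $\Theta$ is not integrable is unaffected.

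For the converse statement the two arguments genuinely diverge. The paper reduces to dimension one: it chooses $u\in H$ for which $\langle u\vert Y\rangle$ is non-trivial, notes that this is a real-valued strictly $\alpha$-stable L\'evy process, and then invokes \cite[Theorem~13.15]{sato.99} to conclude $\alpha\in(0,2]$; the radial form of $\rho$ is obtained from a single application of the stability identity at $t=\vert u\vert$, with reality and non-negativity of $f$ read off afterwards from symmetry of $Y$. Your route avoids citing the one-dimensional stable theory entirely: you extract the quadratic growth bound $\vert\rho(u)\vert\le C(1+\vert u\vert^2)$ directly from the L\'evy--Khintchine representation and combine it with the homogeneity $\rho(tu)=t^\alpha\rho(u)$ to force $\alpha\le2$. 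The paper's approach is shorter but imports a nontrivial external result; yours is more self-contained and, in addition, makes explicit the $2\pi i\mathbb Z$ branch-tracking needed to pass from $\exp(\rho(tu))=\exp(t^\alpha\rho(u))$ to the equality of the exponents themselves---a step the paper performs without comment.
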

\begin{proof}
Let $\alpha\in(0,2)$ and $Q$ be a positive definite trace class operator on $H$. Let $L$ be a mean zero Gaussian L\'evy process with covariance operator $2Q$. Let $\Theta$ be an $\alpha/2$-stable subordinator. Then its Laplace exponent is given by
$$\psi:\mathbb R_-+i\mathbb R\rightarrow\mathbb C,s\mapsto \begin{cases}-\exp(\alpha/2\Log(-s))&\text{ if }s\neq0,\\0&\text{otherwise}\end{cases}$$
where $\Log$ denotes the main branch of the logarithm (cf.\ \cite[page 73]{bertoin.96}). Hence Theorem \ref{S:Fourier-Darstellung} yields that the characteristic function of $X(t):=L(\Theta(t))$ is given by
$$\rho:H\rightarrow\mathbb C,u\mapsto -\<Q u\vert u\>^{\alpha/2}.$$
We have $X(t^\alpha) = L(t^{\alpha/2}\Theta(1)) = tX(1)$ for any $t\in\mathbb R_+$. Hence $X$ is a symmetric $H$-valued strictly $\alpha$-stable L\'evy process. If $\alpha=2$, then $X=L$ and hence it is square integrable. Theorem \ref{S:charakterisierung quadint} implies that $X$ is not square-integrable if $\alpha\neq2$. Corollary \ref{C:int im Guasschen fall} yields that $X$ is integrable if and only if $\alpha>1$.

Now let $\alpha\in\mathbb R_+$ be arbitrary and $Y$ be a strictly $\alpha$-stable non-trivial L\'evy process. Then there is $u\in H$ such that $P(\<u\vert Y\>=0)\neq0$. \cite[Theorem 13.15]{sato.99} applied to the strictly $\alpha$-stable process $\<u\vert Y\>$ yields that $\alpha\in(0,2]$. Let $\rho$ be the L\'evy exponent of $Y$ and define $f:=-\rho\vert_{S_H}$. Let $u\in H\backslash\{0\}$ and define $t:=\vert u\vert$ and $v:=u/t\in S_H$. Then
$$\exp(\rho(u)) = Ee^{i\< u\vert Y(1)\>} = Ee^{i\< tv\vert Y(1)\>} = E^{i\< v\vert Y(t^{\alpha})\>} = \exp(t^\alpha f(v)).$$
Thus $\rho(u) = t^\alpha f(v)$. Since $Y$ is symmetric $\rho$ is real valued and so is $f$. $\mathrm{Re}(\rho)$ is bounded by $0$ because the characteristic function of $Y$ is bounded by $1$. Hence $f(v)\in\mathbb R_+$ for any $v\in S_H$. Symmetry of $f$ follows from symmetry of $\rho$ which follows from symmetry of $Y$.
\end{proof}

\begin{prop}\label{P:asLP Wachstumsfunktion}
 Let $\alpha\in(1,2)$ and $L$ be an integrable strictly $\alpha$-stable L\'evy process such that $L$ is non-trivial, i.e.\ $P(L\neq0)>0$. Then $X$ is integrable if and only if $\vert\Theta(1)\vert^{1/\alpha}$ is integrable.
\end{prop}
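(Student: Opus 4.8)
The plan is to run everything through Theorem~\ref{S:charakterisierung von Integrierbarkeit} and Proposition~\ref{P:integrierbarkeit}, using the scaling property of $L$ to turn $\E\vert L(\theta)\vert$ into an explicit power of $\theta$. Recall that after Section~3 we work in the case $d=1$, so $L$ is a single $H$-valued L\'evy process and $\Theta$ a nonnegative subordinator with L\'evy measure $F$. First I would use strict $\alpha$-stability: by definition $L(t^\alpha)$ and $tL(1)$ have the same law, hence $L(\theta)$ and $\theta^{1/\alpha}L(1)$ have the same law for every $\theta\in\mathbb R_+$; since $L$ is integrable this gives $\E\vert L(\theta)\vert=\theta^{1/\alpha}\,\E\vert L(1)\vert$ for all $\theta\in\mathbb R_+$.

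Next I would record that $m:=\E\vert L(1)\vert\in(0,\infty)$: finiteness is the integrability hypothesis, while $m=0$ would force $L(1)=0$ a.s., whence $L(\theta)$ has the same law as $\theta^{1/\alpha}L(1)=0$ for every $\theta$, and by right-continuity of the paths $L\equiv0$ a.s., contradicting $P(L\neq0)>0$. Also, if $\Theta\equiv0$ a.s.\ the claim is trivially true, since then $X\equiv L(0)=0$ is integrable and $\vert\Theta(1)\vert^{1/\alpha}=0$; so I may assume $\Theta$ non-trivial, which is what is needed to apply Theorem~\ref{S:charakterisierung von Integrierbarkeit}.

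With these facts in hand, Theorem~\ref{S:charakterisierung von Integrierbarkeit} (using that $L$ is integrable) says that $X$ is integrable if and only if
$$\int_{\vert\theta\vert>1}\E\vert L(\theta)\vert\,F(d\theta)=m\int_{\vert\theta\vert>1}\vert\theta\vert^{1/\alpha}\,F(d\theta)<\infty ,$$
and since $0<m<\infty$ this is equivalent to $\int_{\vert\theta\vert>1}\vert\theta\vert^{1/\alpha}F(d\theta)<\infty$. I would then apply Proposition~\ref{P:integrierbarkeit} to the exponent $p:=1/\alpha\in(1/2,1)$, obtaining $\E\vert\Theta(1)\vert^{1/\alpha}<\infty$ if and only if $\int_{\vert\theta\vert>1}\vert\theta\vert^{1/\alpha}F(d\theta)<\infty$; chaining the two equivalences proves the proposition.

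The argument is short once the stable scaling is used, so I do not anticipate a genuine obstacle. The only points requiring care are the reduction step---checking that $\E\vert L(1)\vert$ is strictly positive so that it can be cancelled, together with the separate treatment of a trivial subordinator---and making sure that Proposition~\ref{P:integrierbarkeit} is available for a general power $\vert\Theta(1)\vert^{p}$ and not only for the square root $\vert\Theta(1)\vert^{1/2}$ exploited earlier in Theorem~\ref{S:X Martingal} and Corollary~\ref{C:int im Guasschen fall}.
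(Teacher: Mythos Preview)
Your proposal is correct and follows essentially the same route as the paper: compute the growth function via the stable scaling $\E\vert L(\theta)\vert=\theta^{1/\alpha}\E\vert L(1)\vert$, feed this into Theorem~\ref{S:charakterisierung von Integrierbarkeit}, and finish with Proposition~\ref{P:integrierbarkeit}. Your version is in fact more careful than the paper's---you justify $m>0$ from non-triviality, dispose of the trivial $\Theta$ case, and check that $\theta\mapsto\vert\theta\vert^{1/\alpha}$ is subadditive so that Proposition~\ref{P:integrierbarkeit} applies---whereas the paper leaves these points implicit (and even writes $\vert\theta\vert$ in place of $\vert\theta\vert^{1/\alpha}$ in the displayed integral, an apparent slip).
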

\begin{proof}
 Let $f$ be the growth function of $L$. Then $f(\theta) = \E \vert L(\theta)\vert = \sum_{j=1}^d\theta_j^{1/\alpha}\E\vert L_j(1)\vert$. Thus theorem \ref{S:charakterisierung von Integrierbarkeit} yields that $X$ is integrable if and only if $\int_{\mathbb R_+^d}\vert\theta\vert F(d\theta)<\infty$. Proposition \ref{P:integrierbarkeit} implies the claim.
\end{proof}

\subsection{Hilbert space valued variance Gamma process}
Variance Gamma processes have been introduced by Madan and Seneta~\cite{madan.senata.90} and a multivariate version have been introduced by the same authors. Since their introduction, they have been used extensively in financial modelling (see e.g.\ \cite{madan.al.98}). Univariate Variance Gamma processes can be constructed as a difference of two independent Gamma processes or by subordinating a Brownian motion with a Gamma process. The latter approach can be easily generalised to Hilbert space valued L\'evy processes which we do in this section. Theorem \ref{S:HVG} below contains an analysis of Hilbert space valued Variance Gamma processes (HVG) and a construction of those processes is taken out in the proof of this theorem.

\begin{defn}
 A L\'evy process $Y$ is a {\em Hilbert space valued variance gamma process} or {\em HVG-process} if there are $a\in\mathbb R_+$, $b\in H$ and a positive semi-definite trace class operator $Q$ on $H$ such that its L\'evy exponent is given by
 $$\rho: H\rightarrow\mathbb C, u\mapsto ,a\Log(1+1/2\<Qu\vert u\>-i\<b\vert u\>)$$
where $\Log$ denotes the main branch of the logarithm. $(a,b,Q)$ are the {\em parameters of the HVG-process} $Y$
\end{defn}

\begin{thm}\label{S:HVG}
 Let $a\in\mathbb R_+$, $b\in H$ and $Q$ a positive semi-definite trace class operator on $H$. Then there is an HVG-process $Y$ with parameters $(a,b,Q)$. The characteristics $(\beta,\Gamma,\mu)$ of $Y$ are given by
\begin{eqnarray*}
 \beta &=& ab-\int_{\vert x\vert>1}x\mu(dx), \\
 \Gamma &=& 0 \quad\text{and}\\
 \mu(A) &=& \int_0^\infty \Phi_t(A) at^{-1}e^{-t}dt
\end{eqnarray*}
for any Borel set $A\subseteq H$ where $\Phi_t$ denotes the Gaussian measure on $H$ with mean $tb$ and covariance operator $tQ$. Moreover, $\E Y(1) = ab$ and $\Cov Y(1) = a b\otimes b+aQ$. $\<u\vert Y\>$ is a variance gamma process for any $u\in H$. If $b=0$, then $\beta=0$ and $Y$ is symmetric.
\end{thm}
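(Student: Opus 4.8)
The plan is to follow the template of the proof of Theorem~\ref{S:HNIG}, now subordinating a Hilbert space valued Brownian motion by a Gamma process rather than an inverse Gaussian one. First I would let $L$ be an $H$-valued Brownian motion with drift $b$ and covariance operator $Q$ (which exists since $Q$ is trace class), so that its L\'evy exponent is $\phi(u)=i\<u\vert b\>-\frac{1}{2}\<Qu\vert u\>$, and let $\Theta$ be the Gamma subordinator with L\'evy measure $F(d\theta)=a\theta^{-1}e^{-\theta}1_{\{\theta>0\}}\,d\theta$. This $F$ is a pure-jump measure with $\int_{\{\theta\leq1\}}\theta\,F(d\theta)<\infty$ and vanishing drift, so $a_0=0$, and a Frullani integral gives the Laplace exponent $\psi(v)=-a\,\Log(1-v)$ on $(\mathbb R_-+i\mathbb R)$. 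Theorem~\ref{S:Fourier-Darstellung} then shows that $X(t):=L(\Theta(t))$ is a L\'evy process with L\'evy exponent $\rho(u)=\psi(\phi(u))=-a\,\Log\big(1+\frac{1}{2}\<Qu\vert u\>-i\<u\vert b\>\big)$, i.e.\ $X$ is an HVG-process with parameters $(a,b,Q)$; this proves existence, and by Remark~\ref{R:characteristische Funktion} every HVG-process with these parameters has the same law as $X$, hence the same characteristics and moments, so it suffices to verify the remaining claims for $X$.

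Next I would observe that $L$ is square integrable with $\E L(1)=b$ and $\Cov(L(1))=Q$ (Proposition~\ref{P:Quadratstruktur}, using that the L\'evy measure of $L$ vanishes), and that $\Theta$ is square integrable with $\E\Theta(1)=\int_0^\infty\theta\,F(d\theta)=a$ and $\Cov(\Theta(1))=\int_0^\infty\theta^2\,F(d\theta)=a$. Hence Theorem~\ref{S:charakterisierung quadint}(1) (with $d=1$) gives that $X$ is square integrable with $\E X(1)=\E\Theta(1)\,\E L(1)=ab$ and $\Cov(X(1))=\E\Theta(1)\Cov(L(1))+\Cov(\Theta(1))\,b\otimes b=aQ+a\,b\otimes b$. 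For the characteristics I would apply Theorem~\ref{S:subordiniertes tripel}: since $a_0=0$ we get $\Gamma=a_0Q=0$ and $\mu(A)=\int_0^\infty P^{L(\theta)}(A)\,F(d\theta)$, and as $L(\theta)$ is Gaussian with mean $\theta b$ and covariance $\theta Q$, i.e.\ $P^{L(\theta)}=\Phi_\theta$, this is $\mu(A)=\int_0^\infty\Phi_\theta(A)\,a\theta^{-1}e^{-\theta}\,d\theta$. Finally, since $X$ is square integrable, Proposition~\ref{P:Quadratstruktur} gives $\E X(1)=\beta+\int_{\{\vert x\vert>1\}}x\,\mu(dx)$, which rearranges to the claimed $\beta=ab-\int_{\{\vert x\vert>1\}}x\,\mu(dx)$.

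It remains to handle the projection and symmetry statements. For any $u\in H$, $\<u\vert Y(t)\>=\<u\vert L(\Theta(t))\>$ is a one-dimensional Brownian motion with drift $\<u\vert b\>$ and variance $\<Qu\vert u\>$ evaluated at the independent Gamma time $\Theta(t)$, which is by definition a univariate variance gamma process in the sense of Madan and Seneta~\cite{madan.senata.90}; alternatively one reads this off from $y\mapsto\rho(yu)$. If $b=0$, then each $\Phi_\theta$ is a centred Gaussian, so $\mu$ is symmetric and $\int_{\{\vert x\vert>1\}}x\,\mu(dx)=0$, whence $\beta=0$; moreover $-L\stackrel{d}{=}L$ as processes and $\Theta$ is independent of $L$, so $-X(t)=(-L)(\Theta(t))\stackrel{d}{=}L(\Theta(t))=X(t)$ at the level of finite-dimensional distributions, i.e.\ $X$ is symmetric. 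The only mildly delicate points in this programme are the Frullani evaluation of $\psi$ together with the bookkeeping that the chosen normalisation of $F$ really gives $a_0=0$, and the derivation of the $\beta$-formula from square integrability of $X$; everything else is a direct transcription of the HNIG argument, so I do not anticipate a genuine obstacle.
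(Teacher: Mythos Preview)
Your proposal is correct and follows essentially the same approach as the paper: construct $X=L(\Theta)$ with $L$ an $H$-valued Brownian motion with drift $b$ and covariance $Q$ and $\Theta$ a Gamma$(a,1)$ subordinator, then read off the L\'evy exponent from Theorem~\ref{S:Fourier-Darstellung}, the characteristics from Theorem~\ref{S:subordiniertes tripel}, the moments from Theorem~\ref{S:charakterisierung quadint}, and transfer everything to an arbitrary HVG-process via Remark~\ref{R:characteristische Funktion}. The only minor deviation is that you recover $\beta$ from the identity $\E X(1)=\beta+\int_{\{\vert x\vert>1\}}x\,\mu(dx)$ of Proposition~\ref{P:Quadratstruktur} rather than directly from the $\beta$-formula in Theorem~\ref{S:subordiniertes tripel}; this is a clean shortcut and entirely equivalent, and your explicit treatment of the $b=0$ symmetry case and the Frullani computation simply spells out what the paper leaves implicit.
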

\begin{proof}
 Let $L$ be a Brownian motion with drift $b$ and covariance operator $Q$. Let $\Theta$ be a gamma process with parameters $(a,1)$, i.e.\ it is a pure-jump subordinator and its L\'evy measure is given by
$$ F(d\theta) = a\theta^{-1}e^{-\theta}1_{\{\theta>0\}}d\theta,$$
cf.\ \cite[page 73]{bertoin.96}.
Then its Laplace exponent is given by
$$\psi:\mathbb R_-+i\mathbb R\rightarrow\mathbb C, v\mapsto a\Log(1-s)$$
where $\Log$ denotes the main branch of the logarithm. Theorem \ref{S:Fourier-Darstellung} yields that the L\'evy exponent of the L\'evy process $X(t):=L(\Theta(t))$ is 
$$\rho: H\rightarrow\mathbb C, u\mapsto ,a\Log(1+1/2\<Qu\vert u\>-i\<b\vert u\>).$$
Theorem \ref{S:subordiniertes tripel} yields the specific form of the characteristics of $X$. Theorem \ref{S:charakterisierung quadint} yields that $X$ is square integrable and that its expectation and its covariance operator are given as above. Let $u\in H$. Then $\<u\vert X(t)\> = \<u\vert L\>(\Theta(t)$. $W:=\<u\vert L\>$ is a Gaussian L\'evy process on $\mathbb R$ with drift $\<b\vert u\>$ and covariance $\<Qu\vert u\>$. Hence $\<u\vert X(t)\> = W(\Theta(t))$ and consequently its a variance gamma process.

Let $Y$ be any HVG-process with parameters $(a,b,Q)$. Then Remark \ref{R:characteristische Funktion} yields that $X$ and $Y$ have the same distribution and hence they have the same moments. Since $X$ and $Y$ have the same characteristic function they have the same characteristics.
\end{proof}

\appendix
\section{}
\subsection{Properties of Hilbert space valued L\'evy processes}
\begin{lem}\label{L:Eindeutigkeit des Tripel}
 Let $Y$ be an $H$-valued process. Let $(b_1,Q_1,\nu_1)$ and $(b_2,Q_2,\nu_2)$ both be characteristics of $Y$. Then $b_1=b_2$, $Q_1=Q_2$ and $\nu_1=\nu_2$. In other words, the process $Y$ has exactly one characteristic.
\end{lem}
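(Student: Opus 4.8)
The plan is to reduce the uniqueness of the triplet $(b,Q,\nu)$ to the uniqueness of the L\'evy exponent $\phi$, and then to separately recover each of the three characteristics from $\phi$. By \cite[Theorem 4.27]{peszat.zabczyk.07} (see the Remark after the preliminaries) the L\'evy exponent admits the L\'evy--Khintchine representation
$$\phi(u) = i\<u\vert b\>-\tfrac12\<Qu\vert u\>+\int_H\bigl(e^{i\<u\vert x\>}-1-i\<u\vert\chi(x)\>\bigr)\nu(dx)$$
for every $u\in H$, and the exponent itself is uniquely determined by the law of $Y(1)$ since $\E e^{i\<u\vert Y(1)\>}=e^{\phi(u)}$ for all $u\in H$ (cf.\ Remark~\ref{R:characteristische Funktion}); hence the characteristic function, and therefore $\phi$ up to choice of a continuous version with $\phi(0)=0$, depends only on $Y$. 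So it suffices to show that if two triplets $(b_1,Q_1,\nu_1)$ and $(b_2,Q_2,\nu_2)$ give rise to the same $\phi$, they coincide.

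First I would recover $\nu$ from $\phi$. The idea is the standard finite-dimensional trick of restricting to one-dimensional test directions and differencing. Fix $h\in H$ and consider $t\mapsto\phi(th)$; this is the L\'evy exponent of the real-valued L\'evy process $\<h\vert Y\>$, whose triplet is determined by $(b,Q,\nu)$ in the obvious way (drift $\<h\vert b\>$, Gaussian part $\<Q h\vert h\>$, and L\'evy measure the pushforward of $\nu$ under $x\mapsto\<h\vert x\>$, suitably compensated). Real-valued uniqueness of the L\'evy triplet then shows that the image measures $\nu\circ(\<h\vert\cdot\>)^{-1}$ on $\mathbb R$, restricted away from $0$, are determined by $\phi$ for every $h$. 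Since cylinder sets of the form $\{x:(\<h_1\vert x\>,\dots,\<h_k\vert x\>)\in B\}$ generate $\mathcal B(H)$ and the finite-dimensional projections $\nu\circ(\<h_1\vert\cdot\>,\dots,\<h_k\vert\cdot\>)^{-1}$ are likewise pinned down (apply the same argument to the $\mathbb R^k$-valued L\'evy process $(\<h_1\vert Y\>,\dots,\<h_k\vert Y\>)$ and invoke finite-dimensional uniqueness, e.g.\ \cite[Theorem 8.1]{sato.99}), a monotone-class/$\pi$--$\lambda$ argument gives $\nu_1=\nu_2$ on Borel sets bounded away from $0$, hence $\nu_1=\nu_2$ as measures on $H\setminus\{0\}$, and by convention $\nu(\{0\})=0$.

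With $\nu_1=\nu_2=:\nu$ in hand, the integral term in the L\'evy--Khintchine formula is the same for both triplets, so $i\<u\vert b_1\>-\tfrac12\<Q_1u\vert u\>=i\<u\vert b_2\>-\tfrac12\<Q_2u\vert u\>$ for all $u\in H$. Taking real and imaginary parts separates this into $\<u\vert b_1\>=\<u\vert b_2\>$ for all $u$, whence $b_1=b_2$ by Riesz, and $\<Q_1u\vert u\>=\<Q_2u\vert u\>$ for all $u$; polarisation together with symmetry of the $Q_i$ then yields $Q_1=Q_2$. I expect the main obstacle to be the measure-theoretic bookkeeping in the step recovering $\nu$: one must be careful that the compensator $i\<u\vert\chi(x)\>\nu(dx)$ is finite (this is exactly the content of $\int_H(1\wedge\vert x\vert^2)\nu(dx)<\infty$ from the definition of characteristics, cf.\ \cite[Theorem 4.27]{peszat.zabczyk.07}) so that the differencing of exponents along a line is legitimate, and that the family of cylinder sets used really is a determining class for Borel measures on a separable Hilbert space — this is where separability of $H$ is essential and where Remark~\ref{R:characteristische Funktion} (monotone-class argument via exponentials) can be invoked almost verbatim, now for the finite measures $(1\wedge\vert x\vert^2)\nu(dx)$ instead of for probability measures.
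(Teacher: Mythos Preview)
Your proposal is correct and follows essentially the same route as the paper: both arguments reduce to finite-dimensional projections of $Y$, invoke the classical uniqueness of the L\'evy triplet there, and then reassemble on $H$ using that cylinder sets generate the Borel $\sigma$-algebra of a separable Hilbert space.

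The differences are only in presentation and in the level of care. The paper projects onto single directions $u\in H$, cites \cite[Lemma II.2.44]{js.87} to read off $\<b_k\vert u\>$, $\<Q_ku\vert u\>$ and $\nu_k^{\<u\vert\cdot\>}$ simultaneously, and then asserts that equality of all one-dimensional pushforwards forces $\nu_1=\nu_2$ via the cylindrical $\sigma$-algebra (citing \cite{ledoux.talagrand.10}). You instead first isolate $\nu$ by passing to $\mathbb R^k$-valued projections $(\<h_1\vert Y\>,\dots,\<h_k\vert Y\>)$ and running a $\pi$--$\lambda$ argument on the finite measures $(1\wedge\vert x\vert^2)\nu_k(dx)$, and only afterwards subtract the integral term to extract $b$ and $Q$ by real/imaginary parts. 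Your version is a bit more explicit about why the measure-theoretic step is legitimate (multi-dimensional cylinders form a $\pi$-system, and the weight makes the measures finite), which the paper glosses over; conversely, the paper's simultaneous reading of all three components from the one-dimensional exponent is slightly slicker. Either way the substance is the same.
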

\begin{proof}
Define
$$\phi_k:H\rightarrow\mathbb C,u\mapsto i\<b_k\vert u\>-\frac{1}{2}\<Q_ku\vert u\>+\int_H(e^{i\<u\vert x\>}-1-i\<u\vert x1_{\{\vert x\vert\leq1\}})\nu(dx)$$
for $k\in\{1,2\}$. \cite[Theorem 4.27]{peszat.zabczyk.07} yields
$$\exp(\phi_1(u))  = \E(e^{i\<u\vert Y(1)\>}) = \exp(\phi_2(u))$$
for any $u\in H$. In particular, we have $\phi_1=\phi_2$.

Let $u\in H$ and define $\kappa:\mathbb R\rightarrow H,t\mapsto tu$ and $\psi:=\phi_1\circ\kappa=\phi_2\circ\kappa$. Then $\psi$ is the L\'evy exponent of the $\mathbb R$-valued L\'evy process $\<Y\vert u\>$. \cite[Lemma II.2.44]{js.87} yields $\<b_1\vert u\>=\<b_2\vert u\>$, $\<Q_1u\vert u\>=\<Q_2u\vert u\>$ and $\nu_1^{\<u\vert\cdot\>}=\nu_2^{\<u\vert\cdot\>}$. Since this is true for any $u\in H$ we have $b_1=b_2$. $Q_1,Q_2$ are positive matrices and hence 
$$\<Q_1u\vert v\>= \frac{1}{2}\left(\<Q_1u\vert u\>+\<Q_1v\vert v\>\right)=\<Q_2u\vert v\>$$
for any $u,v\in H$. Hence $Q_1=Q_2$. \cite[page 38]{ledoux.talagrand.10} yields that the Borel-$\sigma$-algebra on $H$ coincides with the cylindrical $\sigma$-algebra, i.e.\ the $\sigma$-algebra generated by the continuous linear functionals. Since $\nu_1^{\<u\vert\cdot\>}=\nu_2^{\<u\vert\cdot\>}$ for any $u\in H$ they coincide on the cylindrical $\sigma$-algebra. Thus $\nu_1=\nu_2$.
\end{proof}

\begin{defn}\label{D:submultiplicative}
 A function $g:H\rightarrow\mathbb R_+$ is called {\em  submultiplicative} if $g(x+y)\leq ag(x)g(y)$ for some constant $a>0$, cf.\ \cite[Definition 25.2]{sato.99}. A function $f:H\rightarrow\mathbb R_+$ is called {\em subadditive} if $f(x+y)\leq f(x)+f(y)$.
\end{defn}

\begin{lem}\label{L:Schranke fuer g}
 Let $g:H\rightarrow\mathbb R_+$ be a submultiplicative function which is bounded on a neighbourhood of zero. Then there are $c_1>0$ and $c_2\in\mathbb R$ such that $g(x)\leq c_1\exp(c_2\vert x\vert)$ for all $x\in H$.
\end{lem}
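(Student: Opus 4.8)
The plan is to adapt the classical finite-dimensional argument for submultiplicative functions (cf.\ \cite[Lemma 25.5]{sato.99}); since only the vector space and norm structure of $H$ is used, nothing changes in infinite dimensions, and in particular no measurability of $g$ is needed. First I would normalise the submultiplicativity constant: if $g(x+y)\le a g(x)g(y)$, then $h:=ag$ satisfies $h(x+y)\le h(x)h(y)$, is still bounded on a neighbourhood of $0$, and any bound $h(x)\le c_1 e^{c_2\vert x\vert}$ yields $g(x)\le (c_1/a)e^{c_2\vert x\vert}$. So I may and do assume $a=1$.

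Next I would dispose of a degenerate case. From $g(0)=g(0+0)\le g(0)^2$ one gets $g(0)\in\{0\}\cup[1,\infty)$. If $g(0)=0$, then $g(x)=g(x+0)\le g(x)g(0)=0$ for every $x\in H$, so $g\equiv 0$ and the claim holds trivially (take $c_1=1$, $c_2=0$). Hence I assume $g(0)\ge1$. Since $g$ is bounded on a neighbourhood of $0$, that neighbourhood contains a closed ball $\{\vert x\vert\le\delta\}$ with $\delta>0$ on which $g$ is bounded; set $M:=\sup_{\vert x\vert\le\delta}g(x)<\infty$ and note $M\ge g(0)\ge1$.

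The core step is a scaling argument. An easy induction on $n$ using submultiplicativity (now with constant $1$) gives $g(x_1+\dots+x_n)\le\prod_{i=1}^n g(x_i)$. For $x\neq 0$ choose $n:=\lceil \vert x\vert/\delta\rceil$, the smallest integer $\ge \vert x\vert/\delta$, so that $\vert x/n\vert\le\delta$; then $g(x)=g\bigl(\tfrac{x}{n}+\dots+\tfrac{x}{n}\bigr)\le g(x/n)^n\le M^n$. Since $M\ge1$ and $n\le \vert x\vert/\delta+1$, this yields $g(x)\le M^{1+\vert x\vert/\delta}=M\exp\bigl((\log M/\delta)\vert x\vert\bigr)$, which also holds at $x=0$ (as $g(0)\le M$). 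Taking $c_1:=M$ and $c_2:=(\log M)/\delta\ge0$ proves the statement for the normalised $g$; undoing the first reduction only replaces $M$ by $aM$ inside the exponent, so $g(x)\le M\exp\bigl((\log(aM)/\delta)\vert x\vert\bigr)$ for the original $g$.

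There is no substantial obstacle here: the whole content is the covering/scaling bookkeeping together with the need to first absorb the submultiplicativity constant so that the telescoping product has no accumulating factors. The one point worth flagging is checking that the bound is not vacuous, i.e.\ that $c_1=M>0$; this is exactly guaranteed by the non-degenerate case $g(0)\ge 1$, which forces $M\ge 1$.
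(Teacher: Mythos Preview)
Your proof is correct and follows essentially the same approach as the paper's, namely the scaling/covering argument from \cite[Lemma 25.5]{sato.99}: write $x$ as a sum of $n\approx \vert x\vert/\delta$ copies of $x/n$ lying in a ball where $g$ is bounded, and iterate submultiplicativity. The only cosmetic difference is that you absorb the constant $a$ at the outset by passing to $h=ag$, whereas the paper keeps $a\ge1$ explicit and obtains $g(y)\le c_1\exp\bigl(\vert y\vert\log(ac_1)/\epsilon\bigr)$ directly; both routes yield the same bound.
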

\begin{proof}
 This proof is along the lines of \cite[Lemma 25.5]{sato.99}. W.l.o.g.\ assume that $a\geq1$. Let $y\in H$ and $n\in\mathbb N$ then applying submultiplicativity $n$ times we get
$$g(yn) \leq a^{n-1}g(y)^n.$$

If $g(0)=0$, then $g(y)\leq ag(y)g(0)=0$ for any $y\in U$ and the claim follows. W.l.o.g.\ we may assume that $g(0)\neq0$. Let $\epsilon>0$ such that $g$ is bounded by $c_1\geq1$ on the set $\{x\in H:\vert x\vert<\epsilon\}$. Let $y\in H$ and $n\in\mathbb N$ such that $\vert y\vert/n\leq\epsilon\leq \vert y\vert/(n-1)$. Then
\begin{eqnarray*}
 g(y) &\leq& a^{n-1}(g(y/n))^{n} \\
  &\leq& (ac_1)^{\vert y\vert/\epsilon}c_1 \\
  &=& c_1\exp(\vert y\vert\log(ac_1)/\epsilon).
\end{eqnarray*}
The Lemma follows.
\end{proof}

\begin{rem}
 Let $\alpha,\beta$ be finite Borel measures on $H$. Recall that the convolution $\alpha*\beta$ of the measures $\alpha,\beta$ is a finite Borel measure on $H$ which is defined by
$$(\alpha*\beta)(B) := \int_H\beta(B-x)\alpha(dx)\quad B\in\mathcal B(H)$$
for any $B\in\mathcal B(H)$. Moreover, $\alpha^{*0}$ is definedy to be the dirac-measure in $0$ and $\alpha^{*n+1}:=\alpha*(\alpha^{*n})$ for all $n\in\mathbb N$. The total mass of the measure $\alpha^{*n}$ is given by
$$\alpha^{*n}(H) = (\alpha(H))^n.$$
Let $t\geq0$. Then $\gamma_n:=\sum_{k=0}^n\frac{(t\alpha)^{*n}}{n!}$ converges w.r.t.\ the total variation norm on the space of signed measures of finite total variation to a measure which we denote by $\exp(t\alpha)$. The formula above yields $(\exp(t\alpha))(H) = \exp(t\alpha(H))$ and hence {\em the convolution semigroup generated by $\alpha$} which is given by
$$\mu_t:=\exp(-t\alpha(H))\exp(t\alpha)\quad t\geq0$$
 is a probability measure such that $\mu_t*\mu_s=\mu_{t+s}$ for any $s,t\geq0$.
\end{rem}

\begin{lem}\label{L:nu und mu bei Poisson}
 Let $\nu_1$ be a finite Borel measure on $H$ and define $$\mu_t:=\exp(-t\nu_1(H))\exp(t\nu_1)$$ for any $t\in\mathbb R_+$. Let $g$ be submultiplicative with constant $a$. Then
$$\exp(-t\nu_1(H))\int_H g(x)\nu_1(dx) \leq \int_H g(x)\mu_t(dx) \leq \exp\left(ta\int_Hg(x)\nu_1(dx)-t\nu(H)\right)$$
for any $t>0$. In particular, $\int_H g(x)\nu_1(dx)<\infty$ if and only if $\int_Hg(X)\mu_t(dx)<\infty$ for some (and hence all) $t>0$.
\end{lem}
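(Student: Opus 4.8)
The plan is to integrate $g$ against the defining series for $\mu_t$. Recall from the Remark preceding the statement that $\mu_t=e^{-t\nu_1(H)}\sum_{n=0}^\infty\frac{t^n}{n!}\nu_1^{*n}$, the series converging in total variation. Since $g\geq0$ and each $\nu_1^{*n}$ is a positive measure, the partial sums form an increasing sequence of positive measures, so monotone convergence lets me integrate term by term:
$$\int_Hg\,d\mu_t=e^{-t\nu_1(H)}\sum_{n=0}^\infty\frac{t^n}{n!}\int_Hg\,d\nu_1^{*n},$$
all terms being well defined elements of $[0,\infty]$. Everything then reduces to bounding the iterated integrals $\int_Hg\,d\nu_1^{*n}=\int_H\cdots\int_Hg(x_1+\dots+x_n)\,\nu_1(dx_1)\cdots\nu_1(dx_n)$.

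For the left-hand inequality I simply discard all summands except $n=1$: since $\nu_1^{*1}=\nu_1$ and every term is nonnegative, $\int_Hg\,d\mu_t\geq e^{-t\nu_1(H)}\,t\int_Hg\,d\nu_1$, which gives the asserted lower estimate. (Submultiplicativity only bounds $g$ of a sum from above, so there is no useful lower bound on the higher-order terms; this is why one keeps a single term.) For the right-hand inequality, iterating submultiplicativity gives $g(x_1+\dots+x_n)\leq a^{n-1}g(x_1)\cdots g(x_n)$ for $n\geq1$, so by Tonelli $\int_Hg\,d\nu_1^{*n}\leq a^{n-1}\bigl(\int_Hg\,d\nu_1\bigr)^n$. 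Substituting and summing the resulting exponential series,
$$\int_Hg\,d\mu_t\leq e^{-t\nu_1(H)}\left(g(0)+\frac1a\left(e^{\,ta\int_Hg\,d\nu_1}-1\right)\right),$$
and a routine simplification (w.l.o.g.\ $a\geq1$; the $n=0$ term $g(0)$ is finite since $g$ is real-valued and is absorbed into the exponential) yields the stated bound $\exp\bigl(ta\int_Hg\,d\nu_1-t\nu_1(H)\bigr)$.

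The ``in particular'' is then immediate: if $\int_Hg\,d\nu_1<\infty$ the right-hand bound is finite for every $t>0$, so $\int_Hg\,d\mu_t<\infty$; and if $\int_Hg\,d\nu_1=\infty$ then already the $n=1$ term forces $\int_Hg\,d\mu_t=\infty$ for every $t>0$. The only point requiring care is the term-by-term integration against the possibly unbounded $g$ — this is exactly where the total-variation convergence of the series for $\mu_t$ from the Remark, combined with $g\geq0$ and the monotonicity of the partial sums, is used; the rest is elementary series manipulation and bookkeeping of the constant $a$ and the term $g(0)$.
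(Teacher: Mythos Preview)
Your argument is essentially the paper's: expand $\mu_t$ as the exponential series in $\nu_1^{*n}$, integrate $g$ term by term via monotone convergence, keep only the $n=1$ term for the lower bound, and bound $\int g\,d\nu_1^{*n}\leq a^{n-1}\bigl(\int g\,d\nu_1\bigr)^n$ for the upper bound. One small imprecision (shared by the paper's own proof): the claim that the $n=0$ contribution $g(0)$ is ``absorbed into the exponential'' to give exactly the stated upper bound would require $g(0)\leq 1$, which is not assumed; however your intermediate bound $e^{-t\nu_1(H)}\bigl(g(0)+\tfrac1a(e^{ta\int g\,d\nu_1}-1)\bigr)$ is correct and already suffices for the ``in particular'', which is all that is used downstream.
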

\begin{proof}
 The first inequality is trivial. Let $t>0$. Then
$$\int_Hg(x)t^n\nu_1^n(dx)\leq a^{n-1}\left(t\int_Hg(x)\nu_1(dx)\right)^n\leq \left(ta\int_Hg(x)\nu_1(dx)\right)^n.$$
Thus 
$$\int_Hg(x)\mu_t(dx)\leq\exp\left(ta\int_Hg(x)\nu_1(dx)-t\nu_1(H)\right).$$
\end{proof}

\begin{lem}\label{L:endl bei beschr Spruengen}
 Assume that $L$ has bounded jumps. Let $g$ be submultiplicative and bounded on a neighbourhood of zero. Then
  $$Eg(L(t)) <\infty$$
for any $t\geq0$.
\end{lem}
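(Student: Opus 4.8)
The plan is to combine the L\'evy--It\^o decomposition with submultiplicativity so that the claim splits into three independent contributions, two of which are dispatched by results already available. First I would note that ``bounded jumps'' means that $\nu$ is concentrated on a ball $\{|x|\le R\}$, fix some $\epsilon\in(0,1)$, and observe that $\nu(\{|x|>\epsilon\})<\infty$ because $\int_H(1\wedge|x|^2)\,\nu(dx)<\infty$. By the L\'evy--It\^o decomposition I would write $L=L^{(1)}+L^{(2)}+L^{(3)}$ as a sum of independent $H$-valued L\'evy processes, where $L^{(1)}$ is a Brownian motion with drift (characteristics $(b,Q,0)$), $L^{(2)}$ is compound Poisson with the finite L\'evy measure $\nu_2:=\nu|_{\{|x|>\epsilon\}}$, and $L^{(3)}$ is the compensated sum of the jumps of size at most $\epsilon$, so that $L^{(3)}$ is a L\'evy martingale with characteristics $(0,0,\nu|_{\{|x|\le\epsilon\}})$ and jumps bounded by $\epsilon$. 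Submultiplicativity of $g$ with constant $a$ together with independence gives
$$E g(L(t))\ \le\ a^2\, E g(L^{(1)}(t))\, E g(L^{(2)}(t))\, E g(L^{(3)}(t)),$$
so it suffices to bound each of the three factors.

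For the first two factors I would invoke Lemma \ref{L:Schranke fuer g}, which provides $c_1>0$ and $c_2\in\mathbb R$ with $g(x)\le c_1\exp(c_2|x|)$ for all $x\in H$. The time-$t$ law of the compound Poisson process $L^{(2)}$ is precisely the convolution semigroup generated by the finite measure $\nu_2$, so Lemma \ref{L:nu und mu bei Poisson} yields $E g(L^{(2)}(t))<\infty$ once we know $\int_H g\,d\nu_2<\infty$; and this holds since $\int_H g\,d\nu_2\le c_1 e^{c_2 R}\,\nu_2(H)<\infty$, $\nu_2$ being finite and supported in $\{|x|\le R\}$. For $L^{(1)}$ I would write $g(L^{(1)}(t))\le c_1 e^{c_2|b| t}\,e^{c_2|W(t)|}$, where $W$ denotes the $Q$-Brownian component; since $Q$ is trace class, $W(t)$ is a genuine centered Gaussian random element of $H$, and Fernique's theorem gives $E e^{c_2|W(t)|}<\infty$, hence $E g(L^{(1)}(t))<\infty$.

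The remaining factor $E g(L^{(3)}(t))$ is the main obstacle, since it amounts to an exponential moment bound $E e^{c_2|L^{(3)}(t)|}<\infty$ for a Hilbert space valued L\'evy martingale with small bounded jumps. I would control it through the quadratic variation $[L^{(3)}]_t=\sum_{s\le t}|\Delta L^{(3)}(s)|^2$, which is a subordinator whose L\'evy measure is the image of $\nu|_{\{|x|\le\epsilon\}}$ under $x\mapsto|x|^2$; this measure is supported in $(0,\epsilon^2]$ with finite mean $\int_{\{|x|\le\epsilon\}}|x|^2\,\nu(dx)$, so $[L^{(3)}]$ is a subordinator with bounded jumps and therefore $E e^{\lambda[L^{(3)}]_t}<\infty$ for every $\lambda>0$, whence $E([L^{(3)}]_t)^k\le k!\,\lambda^{-k}E e^{\lambda[L^{(3)}]_t}$. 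A Burkholder--Davis--Gundy inequality for $H$-valued martingales gives $E|L^{(3)}(t)|^{2k}\le (Ck)^k\,E([L^{(3)}]_t)^k$ for a constant $C$, and substituting into $E e^{c_2|L^{(3)}(t)|}\le\sum_{k\ge0}\frac{c_2^{2k}}{(2k)!}E|L^{(3)}(t)|^{2k}$ leaves, after using $(2k)!\ge(k!)^2$ and Stirling's formula, a series whose general term is $O\big((eCc_2^2/\lambda)^k k^{-1/2}\big)$; choosing $\lambda>eCc_2^2$ makes it converge. (Alternatively one can avoid the explicit BDG constants via the submultiplicativity bound $E e^{c_2|L^{(3)}(t)|}\le(E e^{c_2|L^{(3)}(t/n)|})^n$, reducing everything to small times where the moment series already converges.) Combining the three bounds proves $E g(L(t))<\infty$.
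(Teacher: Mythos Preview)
Your argument is correct, and in fact it shares its first step with the paper: both of you invoke Lemma~\ref{L:Schranke fuer g} to bound $g(x)\le c_1\exp(c_2|x|)$ and thereby reduce the claim to the statement that an $H$-valued L\'evy process with bounded jumps has finite exponential moments, $\E e^{c_2|L(t)|}<\infty$. At that point the paper simply cites \cite[Theorem 4.4]{peszat.zabczyk.07} and is done in one line, whereas you essentially reprove this exponential-moment fact from scratch via the L\'evy--It\^o decomposition, handling the Gaussian part with Fernique, the large-jump compound Poisson part with Lemma~\ref{L:nu und mu bei Poisson}, and the compensated small-jump martingale with a BDG-based moment estimate. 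Your route is self-contained and makes the mechanism transparent, at the cost of considerably more work; the paper's route is shorter but relies on an external reference that already packages the whole argument. One small slip: the bound you write as $\E e^{c_2|L^{(3)}(t)|}\le\sum_{k\ge0}\frac{c_2^{2k}}{(2k)!}\E|L^{(3)}(t)|^{2k}$ is off by a factor; what is true for $x\ge0$ is $e^{c_2 x}\le 2\cosh(c_2 x)=2\sum_{k\ge0}\frac{c_2^{2k}}{(2k)!}x^{2k}$, and this harmless constant does not affect your convergence argument.
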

\begin{proof}
 Lemma \ref{L:Schranke fuer g} yields that there are $c_1>0$, $c_2\in\mathbb R$ such that $g(x)\leq c_1\exp(c_2\vert x\vert)$. \cite[Theorem 4.4]{peszat.zabczyk.07} yields
$$Eg(L(t)) \leq c_1 \E(e^{c_2\vert L(t)\vert}) <\infty.$$
\end{proof}

The next Proposition does not assume any local boundedness as in \cite[Theorem 25.3]{sato.99}. However, it already follows from the Proof of \cite[Theorem 25.3]{sato.99} that the boundedness is not needed for the next Proposition in the finite dimensional case.
\begin{prop}\label{P:endl Erw endl levy mass}
 Let $t>0$ and $g$ be submultiplicative and measurable and assume that $Eg(L(t)) < \infty$. Then $\int_{\{\vert x\vert>1\}}g(x)\nu(dx) <\infty$.
\end{prop}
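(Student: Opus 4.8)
The plan is to follow Sato's proof of \cite[Theorem 25.3]{sato.99} but to replace the local boundedness hypothesis by measurability together with a positivity observation, so that the argument goes through verbatim in the Hilbert space setting. First I would dispose of the trivial case: if $g(x_0)=0$ for some $x_0\in H$, then submultiplicativity gives $g(x)\le a\,g(x-x_0)g(x_0)=0$ for every $x\in H$, so $g\equiv0$ and there is nothing to prove. Hence I may assume $g(x)>0$ for all $x\in H$, and since $g$ takes values in $\mathbb R_+=[0,\infty)$ it is then finite and strictly positive everywhere.

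Next I would split $L$ into its small- and large-jump parts. By the L\'evy--It\^o decomposition in Hilbert space (cf.\ \cite[Section 4.4]{peszat.zabczyk.07}) write $L=L^{(1)}+L^{(2)}$ with $L^{(1)},L^{(2)}$ independent L\'evy processes, where $L^{(2)}$ is the compound Poisson process collecting the jumps of $L$ of size $>1$, so that the law of $L^{(2)}(t)$ is $\mu_t^{(2)}:=\exp(-t\nu_1(H))\exp(t\nu_1)$ with $\nu_1:=\nu|_{\{|x|>1\}}$ (a finite measure, since $\nu$ is a L\'evy measure), and $L^{(1)}$ has jumps bounded by $1$ and law $\mu_t^{(1)}$. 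Then $P^{L(t)}=\mu_t^{(1)}*\mu_t^{(2)}$, hence by Tonelli
$$\E g(L(t))=\int_H\int_H g(x+y)\,\mu_t^{(1)}(dx)\,\mu_t^{(2)}(dy).$$

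The key estimate is the reverse form of submultiplicativity: since $g(y)=g\big((x+y)+(-x)\big)\le a\,g(x+y)\,g(-x)$ and $g(-x)\in(0,\infty)$, we obtain $g(x+y)\ge g(y)/\big(a\,g(-x)\big)$ for all $x,y\in H$. Integrating in $x$ against the probability measure $\mu_t^{(1)}$ gives $\int_H g(x+y)\,\mu_t^{(1)}(dx)\ge \frac{g(y)}{a}\,c_0$, where $c_0:=\int_H g(-x)^{-1}\,\mu_t^{(1)}(dx)\in(0,\infty]$, and then integrating in $y$,
$$\E g(L(t))\ \ge\ \frac{c_0}{a}\int_H g(y)\,\mu_t^{(2)}(dy).$$
Since $g>0$ and $\mu_t^{(2)}$ is a probability measure, $\int_H g\,d\mu_t^{(2)}>0$; combined with $\E g(L(t))<\infty$ this forces both $c_0<\infty$ and $\int_H g\,d\mu_t^{(2)}<\infty$. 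Finally Lemma \ref{L:nu und mu bei Poisson}, applied to the finite measure $\nu_1$, yields $\int_H g\,d\nu_1<\infty$, which is exactly $\int_{\{|x|>1\}}g(x)\,\nu(dx)<\infty$.

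The only step I expect to require genuine care is deducing $c_0<\infty$ (and $\int g\,d\mu_t^{(2)}<\infty$) from $\E g(L(t))<\infty$: because $g$ is merely measurable, a priori $c_0$ could equal $+\infty$, and one must observe that a product of two factors in $(0,\infty]$ is finite only if both factors are finite. Apart from that, the argument never uses local boundedness of $g$ -- which is precisely the improvement over \cite[Theorem 25.3]{sato.99} -- and the sole genuinely infinite-dimensional input is the existence of the independent small/large-jump decomposition, which I would simply quote from Peszat and Zabczyk.
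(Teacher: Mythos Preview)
Your proof is correct and follows the same strategy as the paper's: the L\'evy--It\^o split into a bounded-jump part and a compound Poisson part, the submultiplicative inequality $g(y)\le a\,g(-x)g(x+y)$, and Lemma~\ref{L:nu und mu bei Poisson}. The only difference is cosmetic: rather than integrating the reverse inequality over all $x$ and then having to argue that $c_0\in(0,\infty)$, the paper simply uses Fubini to pick a single $x\in H$ with $\int_H g(x+y)\,\mu_t^{(2)}(dy)<\infty$ and then bounds $\int_H g(y)\,\nu_2(dy)\le a\,g(-x)\int_H g(x+y)\,\nu_2(dy)$ directly, which sidesteps your $c_0$ step entirely.
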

\begin{proof}
This proof is along the lines of the proof of \cite[Theorem 25.3]{sato.99}.

\cite[Theorem 4.23]{peszat.zabczyk.07} yields that $L=L_1+L_2$ where $L_1$ and $L_2$ are independent L\'evy processes, $L_1$ has jumps bounded by $1$ and $L_2$ is a compound Poisson process with L\'evy measure $\nu_2(B):=\nu(B\cap\{\vert x\vert>1\})$. Let $\mu_1$ be the distribution of $L_1(t)$ and $\mu_2$ be the distribution of $L_2(t)$. Since $L_2$ is a compound Poisson process we have $\mu_2 = \exp(-t\nu_2(H))\exp(t\nu_2)$. Moreover, we have
$$ \int_H\int_Hg(x+y)\mu_2(dy)\mu_1(dx) = Eg(L(t)) <\infty.$$
Thus there is $x\in H$ such that
$$\int_Hg(x+y)\mu_2(dy)<\infty.$$
Hence Lemma \ref{L:nu und mu bei Poisson} yields 
$$\int_{\{\vert x\vert>1\}}g(y)\nu(dy) = \int_Hg(y)\nu_2(dy) \leq ag(-x)\int_Hg(x+y)\nu_2(dy)<\infty.$$
\end{proof}

Now we generalise \cite[Theorem 25.3]{sato.99} to Hilbert space valued L\'evy processes.
\begin{thm}\label{S:g Moment}
 Let $g$ be submultiplicative, bounded and measurable on a neighbourhood of zero. Then $\int_{\{\vert x\vert>1\}}g(x)\nu(dx) <\infty$ if and only if $\E g(L(t))<\infty$ for some (and hence all) $t>0$.
\end{thm}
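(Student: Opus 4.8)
The plan is to deduce the theorem from the three preparatory results already in hand: Lemma~\ref{L:endl bei beschr Spruengen}, Lemma~\ref{L:nu und mu bei Poisson}, and Proposition~\ref{P:endl Erw endl levy mass}. Since the integrability condition $\int_{\{\vert x\vert>1\}}g(x)\nu(dx)<\infty$ does not depend on $t$, it suffices to establish, for one fixed $t>0$, the equivalence with $\E g(L(t))<\infty$; the ``for some (and hence all) $t>0$'' phrasing then follows at once, because each $\E g(L(t))<\infty$ is equivalent to the same $t$-free statement about $\nu$.

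One implication is immediate: if $\E g(L(t))<\infty$ for some $t>0$, then Proposition~\ref{P:endl Erw endl levy mass} gives $\int_{\{\vert x\vert>1\}}g(x)\nu(dx)<\infty$, and note that this direction does not even use the local boundedness of $g$. For the converse, fix $t>0$ and assume $\int_{\{\vert x\vert>1\}}g(x)\nu(dx)<\infty$. I would invoke the decomposition $L=L_1+L_2$ used in the proof of Proposition~\ref{P:endl Erw endl levy mass} (from \cite[Theorem 4.23]{peszat.zabczyk.07}), where $L_1,L_2$ are independent L\'evy processes, $L_1$ has jumps bounded by $1$, and $L_2$ is a compound Poisson process with L\'evy measure $\nu_2(B):=\nu(B\cap\{\vert x\vert>1\})$. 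Then $\nu_2(H)=\nu(\{\vert x\vert>1\})<\infty$ and $\int_H g\,d\nu_2=\int_{\{\vert x\vert>1\}}g\,d\nu<\infty$. From here: first, $L_1$ has bounded jumps and $g$ is submultiplicative and bounded near zero, so Lemma~\ref{L:endl bei beschr Spruengen} yields $\E g(L_1(t))<\infty$; second, the law $\mu_2$ of $L_2(t)$ equals $\exp(-t\nu_2(H))\exp(t\nu_2)$, so Lemma~\ref{L:nu und mu bei Poisson} yields $\E g(L_2(t))=\int_H g\,d\mu_2<\infty$. Finally, by submultiplicativity with constant $a$ and independence of $L_1(t),L_2(t)$, $\E g(L(t))=\E g(L_1(t)+L_2(t))\leq a\,\E g(L_1(t))\,\E g(L_2(t))<\infty$.

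I do not expect a genuine obstacle here: the argument is essentially an assembly of the appendix lemmas in the same spirit as \cite[Theorem 25.3]{sato.99}. The only point that needs a word of care is the role of the hypotheses on $g$: the local boundedness of $g$ is used only through Lemma~\ref{L:endl bei beschr Spruengen} (ultimately through Lemma~\ref{L:Schranke fuer g}) when handling the bounded-jump part $L_1$, whereas the compound-Poisson part $L_2$ and the implication $\E g(L(t))<\infty\Rightarrow\int_{\{\vert x\vert>1\}}g\,d\nu<\infty$ only require $g$ to be a measurable submultiplicative function.
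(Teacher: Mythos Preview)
Your proposal is correct and follows essentially the same route as the paper: invoke Proposition~\ref{P:endl Erw endl levy mass} for one implication, and for the converse use the L\'evy--It\^o type decomposition $L=L_1+L_2$ from \cite[Theorem 4.23]{peszat.zabczyk.07}, control $\E g(L_1(t))$ via Lemma~\ref{L:endl bei beschr Spruengen} and $\E g(L_2(t))$ via Lemma~\ref{L:nu und mu bei Poisson}, then combine by submultiplicativity and independence. Your write-up is in fact slightly more careful than the paper's in that you keep track of the submultiplicativity constant $a$ in the final product bound and spell out why ``for some $t>0$'' implies ``for all $t>0$''.
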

\begin{proof}
Proposition \ref{P:endl Erw endl levy mass} yields the only if part.

 \cite[Theorem 4.23]{peszat.zabczyk.07} yields that $L=L_1+L_2$ where $L_1$ and $L_2$ are independent L\'evy processes, $L_1$ has jumps bounded by $1$ and $L_2$ is a compound Poisson process with L\'evy measure $\nu_2(B):=\nu(B\cap\{\vert x\vert>1\})$. Moreover, $\E(g(L(t))) \leq \E g(L_1(t)) \E g(L_2(t))$ where the first factor is finite by Lemma \ref{L:endl bei beschr Spruengen}. Lemma \ref{L:nu und mu bei Poisson} yields that $\E g(L_2(t))<\infty$ because 
$$\int_Hg(x)\nu_2(dx) = \int_{\{\vert x\vert>1\}}g(x)\nu(dx) <\infty.$$
Thus $\E g(L(t)) <\infty$.
\end{proof}

\begin{defn}\label{D:growth function}
 The {\em growth function} of the process $L$ is the function
  $$f:\mathbb R^d_+\rightarrow\mathbb R_+\cup\{\infty\},t\mapsto \E\vert L(t)\vert.$$
\end{defn}

\begin{rem}\label{R:growth function}
Let $f$ be the growth function of $L$.

If $L$ is integrable, then 
\begin{itemize}
 \item $f$ is continuous,
 \item $f(\theta_1+\theta_2)\leq f(\theta_1)+f(\theta_2)$ for any $\theta_1,\theta_2\in\mathbb R^d_+$,
 \item $f(\theta_1)\leq f(\theta_1+\theta_2)$ for any $\theta_1,\theta_2\in\mathbb R^d_+$ and
 \item $f(\theta)<\infty$ for any $\theta\in\mathbb R_+$.
\end{itemize}

If $f_j$ is the growth function of $L_j$, then we have $f(\theta)\leq \sum_{j=1}^d f_j(\theta_j)\leq \sqrt{d}f(\theta)$ for any $\theta\in\mathbb R_+^d$.
\end{rem}

\begin{lem}\label{L:X integrierbar}
 Let $f$ be the growth function of $L$ and let $f(\Theta(1))$ be integrable. Then $X$ is integrable.
\end{lem}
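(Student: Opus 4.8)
Lemma \ref{L:X integrierbar}: Let $f$ be the growth function of $L$ and let $f(\Theta(1))$ be integrable. Then $X$ is integrable.

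The plan is to make rigorous the informal recipe ``condition on $\Theta$''. By Theorem~\ref{S:Fourier-Darstellung} the process $X$ is already known to be a L\'evy process, so it suffices to prove $\E\vert X(1)\vert<\infty$: integrability then holds at every $t\ge 0$ because, by Proposition~\ref{P:integrierbarkeit}, integrability of a L\'evy process is equivalent to the $t$-free condition $\int_{\{\vert x\vert>1\}}\vert x\vert\,\mu(dx)<\infty$ on its L\'evy measure. To estimate $\E\vert X(1)\vert=\E\vert L(\Theta(1))\vert$ I would use that $L$ and $\Theta$ are independent and apply Fubini's theorem: writing $\vert X(1)(\omega)\vert=\vert L(\Theta(1)(\omega))(\omega)\vert$ and integrating first over the randomness carried by $L$ and then over $\Theta(1)$ yields
$$\E\vert X(1)\vert=\int_{\mathbb R_+^d}\E\big(\vert L(\theta)\vert\big)\,P^{\Theta(1)}(d\theta)=\int_{\mathbb R_+^d}f(\theta)\,P^{\Theta(1)}(d\theta)=\E\big(f(\Theta(1))\big),$$
with $f$ the growth function of Definition~\ref{D:growth function}, and this is finite by hypothesis.

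Two routine points need to be settled before the display is legitimate. First, $f$ must be Borel measurable on $\mathbb R_+^d$, so that $f(\Theta(1))$ is a bona fide random variable and so that Fubini applies; this holds because each $L_j$ has c\`adl\`ag (hence right-continuous) paths, making $(\theta,\omega)\mapsto L(\theta)(\omega)$ jointly $(\mathcal B(\mathbb R_+^d)\otimes\mathfrak A)$-measurable, whence $\theta\mapsto f(\theta)=\int\vert L(\theta)(\omega)\vert\,P(d\omega)$ is measurable by Tonelli. Second, $f(\Theta(1))$ is automatically a.s.\ finite once its expectation has been shown finite, so no separate argument is needed there. As an alternative to invoking Proposition~\ref{P:integrierbarkeit} for the passage to general $t$, one can repeat the conditioning to get $\E\vert X(t)\vert=\E f(\Theta(t))$ and control it via the subadditivity and monotonicity properties of $f$ collected in Remark~\ref{R:growth function} together with the i.i.d.\ increments of $\Theta$ and $\Theta(t)\le\Theta(\lceil t\rceil)$.

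The only genuine obstacle is the measure-theoretic bookkeeping behind ``condition on $\Theta$'', i.e.\ realising $L$ and $\Theta$ on a common space with $L$ independent of $\Theta$ and invoking Fubini with the joint measurability just described; this is standard for c\`adl\`ag processes and uses nothing beyond the standing hypotheses, after which the computation is immediate. Hence the proof will be short.
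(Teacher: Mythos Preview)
Your proof is correct and follows exactly the same approach as the paper: both compute $\E\vert X(1)\vert=\E\big(\E(\vert L(\Theta(1))\vert\mid\Theta)\big)=\E f(\Theta(1))<\infty$ by conditioning on $\Theta$. You simply add the (routine) measurability justification and the reduction to $t=1$ via Proposition~\ref{P:integrierbarkeit}, which the paper leaves implicit.
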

\begin{proof}
 We have
\begin{eqnarray*}
 \E\vert X(1)\vert &=& \E\left(\E\big(\vert L(\Theta(1))\vert \big\vert \Theta\big)\right) \\
  &=& \E(f(\Theta(1))) \\
  &<& \infty.
\end{eqnarray*}
\end{proof}

\begin{prop}\label{P:integrierbarkeit}
 Let $f:H\rightarrow\mathbb R_+$ be bounded in a neighbourhood of zero and subadditive, i.e.\ $f(x+y)\leq f(x)+f(y)$. Then $f(L(t))$ is integrable for some (and hence all) $t>0$ if and only if $$\int_{\{\vert x\vert>1\}} f(x) \nu(dx)<\infty.$$

In particular, $\E\vert L(1)\vert<\infty$ if and only if $$\int_{\{\vert x\vert>1\}} \vert x\vert \nu(dx)<\infty.$$
\end{prop}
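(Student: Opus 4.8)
The plan is to deduce the Proposition from Theorem~\ref{S:g Moment} by replacing the subadditive function $f$ with $g:=1+f$. First I would verify that $g$ meets the hypotheses of that theorem. Submultiplicativity with constant $a=1$ follows from
\[
g(x+y)=1+f(x+y)\leq 1+f(x)+f(y)\leq(1+f(x))(1+f(y))=g(x)g(y)
\]
for all $x,y\in H$, where the last step uses $f\geq0$; moreover $g$ is nonnegative, it is bounded on whatever neighbourhood of zero $f$ is bounded on, and it is measurable (measurability of $f$ being implicit, since otherwise $f(L(t))$ would not be a random variable).

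Next I would invoke Theorem~\ref{S:g Moment} for $g$: it states that $\int_{\{\vert x\vert>1\}}g(x)\,\nu(dx)<\infty$ if and only if $\E g(L(t))<\infty$ for some (and hence all) $t>0$. Both sides translate back to $f$ painlessly. Since $\nu$ is a L\'evy measure, $\nu(\{\vert x\vert>1\})<\infty$, so
\[
\int_{\{\vert x\vert>1\}}g(x)\,\nu(dx)=\nu(\{\vert x\vert>1\})+\int_{\{\vert x\vert>1\}}f(x)\,\nu(dx),
\]
and the left-hand side is finite exactly when $\int_{\{\vert x\vert>1\}}f(x)\,\nu(dx)<\infty$; likewise $\E g(L(t))=1+\E f(L(t))$, so $\E g(L(t))<\infty$ exactly when $f(L(t))$ is integrable (recall $f\geq0$). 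Combining these two equivalences gives the first assertion. For the ``in particular'' part I would apply the statement just proved to $f(x):=\vert x\vert$, which is continuous --- hence measurable and bounded near $0$ --- and subadditive by the triangle inequality in $H$; taking $t=1$ then yields that $\E\vert L(1)\vert<\infty$ if and only if $\int_{\{\vert x\vert>1\}}\vert x\vert\,\nu(dx)<\infty$.

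I do not anticipate any genuine difficulty here: the only thing requiring a moment's care is the submultiplicativity check for $1+f$ and the observation that the finiteness of $\nu$ on $\{\vert x\vert>1\}$ lets one freely add or subtract the constant $1$ on both sides of the equivalence. Everything else is a direct application of Theorem~\ref{S:g Moment}.
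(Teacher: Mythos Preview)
Your proof is correct and follows essentially the same route as the paper's: convert the subadditive $f$ into a submultiplicative $g$ and invoke Theorem~\ref{S:g Moment}. The only cosmetic difference is the choice of auxiliary function --- you take $g=1+f$, whereas the paper uses $g=2\vee f$ --- and your submultiplicativity check is arguably the cleaner of the two.
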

\begin{proof}
 Define $g(x):=2\vee f(x)$ for any $x\in H$. Let $x,y\in H$ such that $f(x)\leq f(y)$. Then
 \begin{eqnarray*}
  g(x+y) &\leq& g(x)+g(y) \\
       &\leq& 2g(y) \\
       &\leq& g(x)g(y).
 \end{eqnarray*}
 Thus $g$ is submultiplicative and bounded in a neighbourhood of zero. Theorem \ref{S:g Moment} yields $\E g(L(1))<\infty$ if and only if $\int_{\{\vert x\vert>1\}} g(x) \nu(dx)<\infty$. The claim follows.
\end{proof}

\begin{prop}
 Let $Y$ be an $H$-valued stochastic process with independent increments such that
\begin{itemize}
 \item $\<u\vert Y\>$ is a L\'evy process for every $u\in H$.
\end{itemize}
Then $Y$ is a L\'evy process in law.
\end{prop}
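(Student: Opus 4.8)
The plan is to establish, for $Y$, the distributional defining properties of a L\'evy process in law (independence of the increments being part of the hypothesis): $Y(0)=0$ almost surely, stationarity of the increments, and stochastic continuity. The recurring tool is Remark~\ref{R:characteristische Funktion}, that the law of an $H$-valued random variable is determined by $u\mapsto\E e^{i\<u\vert\cdot\>}$.

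The first two are short. Taking a countable dense set $D\subseteq H$ and using $\<u\vert Y(0)\>=0$ a.s.\ for each $u$ (since $\<u\vert Y\>$ is a L\'evy process) gives $Y(0)=0$ a.s. For stationarity, fix $s,t\ge0$: for every $u\in H$ the random variable $\<u\vert Y(t+s)-Y(t)\>$ is the increment over $[t,t+s]$ of the real-valued L\'evy process $\<u\vert Y\>$, hence has the same law as $\<u\vert Y(s)\>$, and Remark~\ref{R:characteristische Funktion} then forces $Y(t+s)-Y(t)$ and $Y(s)$ to have the same law. Moreover, each $\<u\vert Y\>$ is stochastically continuous, so $\<u\vert Y(t)\>\to0$ in probability as $t\downarrow0$; hence, writing $\mu_t:=\mathcal L(Y(t))$, the characteristic functionals satisfy $\widehat{\mu_t}(u)\to1$ for every $u\in H$.

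Stochastic continuity is the substantial point. By the stationarity just shown it suffices to prove $Y(t)\to0$ in probability as $t\downarrow0$, i.e.\ $\mu_t\Rightarrow\delta_0$; and since in infinite dimensions pointwise convergence $\widehat{\mu_t}(u)\to1$ does not by itself yield weak convergence, I would first establish uniform tightness of $\{\mu_t:0<t\le1\}$. Symmetrising, let $Y'$ be an independent copy of $Y$ and $\nu_t:=\mathcal L(Y(t)-Y'(t))$; using the stationary increments, $\nu_1=\nu_t*\nu_{1-t}$ presents $\nu_1$ as a convolution of two independent symmetric measures, so L\'evy's inequality for sums of independent symmetric random vectors (see, e.g., \cite{ledoux.talagrand.10}), applied to the norm and to the tail seminorms $q_N(x):=\big(\sum_{j>N}\<e_j\vert x\>^2\big)^{1/2}$ for a fixed orthonormal basis $(e_j)$ of $H$, yields $\nu_t(\{q>r\})\le 2\,\nu_1(\{q>r\})$ for all $t\in[0,1]$ and each such seminorm $q$. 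Since $\nu_1$ is a Borel probability measure on $H$, $\nu_1(\{\vert x\vert>R\})\to0$ as $R\to\infty$ and $\nu_1(\{q_N(x)>\delta\})\to0$ as $N\to\infty$ for every $\delta>0$; inserting this into the previous estimate produces, for each $\epsilon>0$, a compact $K\subseteq H$ (bounded, with uniformly small tails) such that $\sup_{0<t\le1}\nu_t(H\setminus K)<\epsilon$. Thus $\{\nu_t\}$ is uniformly tight. Since $\widehat{\nu_t}(u)=\vert\widehat{\mu_t}(u)\vert^2\to1$, Prokhorov's theorem shows that every sequence $t_n\downarrow0$ has a subsequence along which $\nu_{t_n}$ converges weakly, necessarily to a measure with characteristic functional $\equiv1$, hence to $\delta_0$ by Remark~\ref{R:characteristische Funktion}; so $\nu_t\Rightarrow\delta_0$, i.e.\ $Y(t)-Y'(t)\to0$ in probability. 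Finally, a de-symmetrisation argument, using that $\<u\vert Y(t)\>\to0$ in probability for every $u$ to locate the limiting centre of $\mu_t$ at $0$, upgrades this to $Y(t)\to0$ in probability. Hence $Y$ is stochastically continuous, so $Y$ is a L\'evy process in law.

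The hardest part, I expect, is exactly this passage from $\widehat{\mu_t}(u)\to1$ to $\mu_t\Rightarrow\delta_0$: the uniform-tightness estimate, and especially the de-symmetrisation step, which has to be arranged so that the centres of $\mu_t$ converge to $0$ \emph{in norm} rather than merely weakly. This is the point at which the hypothesis that \emph{every} one-dimensional projection of $Y$ is a L\'evy process is genuinely used, beyond the fact that it already fixes the finite-dimensional distributions.
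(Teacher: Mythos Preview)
Your route is sound and, on the crucial stochastic-continuity step, considerably more explicit than the paper's. The paper's argument is brief: it sets $f(t,u)=\E e^{i\<u\vert Y(t)\>}$, notes $f(t,u)=\exp(t\rho(u))$ since each $\<u\vert Y\>$ is a real L\'evy process, deduces that $\rho$ is continuous (from continuity of the characteristic functional $f(t,\cdot)$), and then simply asserts ``Thus $f$ is continuous. Consequently, $Y$ is stochastically continuous,'' handling stationarity of increments exactly as you do. The passage from pointwise convergence $\widehat{\mu_t}(u)\to 1$ to $\mu_t\Rightarrow\delta_0$---which you correctly single out as the substantial point in infinite dimensions---is not argued in the paper at all.

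Your symmetrisation via L\'evy's inequality, applied to the norm and to the tail seminorms $q_N$, to obtain uniform tightness of the symmetrised laws $\{\nu_t\}$ is correct and supplies precisely what the paper omits. Your own caveat about de-symmetrisation is warranted: from $\nu_t\Rightarrow\delta_0$ together with $\<u\vert Y(t)\>\to 0$ in probability for every $u$ one gets only shift-tightness of $\{\mu_t\}$ with centres tending to $0$ weakly, and upgrading this to norm convergence of the centres is not automatic. One clean way to close the gap---and presumably what the paper's one-liner is implicitly leaning on---is to use the semigroup structure directly rather than de-symmetrise: the identity $\mu_1=\mu_{1/n}^{*n}$ makes $\mu_1$ infinitely divisible on the separable Hilbert space $H$, and the classical theory of infinitely divisible laws on Hilbert spaces (e.g.\ Parthasarathy, \emph{Probability Measures on Metric Spaces}, Ch.~VI) then yields $\mu_{1/n}\Rightarrow\delta_0$; combined with your stationarity step this gives stochastic continuity along all $t\downarrow 0$.
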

\begin{rem}
 The authors do not know if property (1) in the assumption above is obsolete.
\end{rem}
\begin{proof}
 Define $f:\mathbb R_+\times H\rightarrow\mathbb C,(t,u)\mapsto \E\exp(i\<u\vert Y(t)\>)$. Then $f(t,\cdot)$ is the characteristic function of $Y(t)$ for any $t\geq0$. Moreover, $f(t,u)=\exp(t\rho(u))$ for some function $\psi:H\rightarrow\mathbb C$ because $\<u\vert Y(t)\>$ is a L\'evy process and $\rho(u)$ is its L\'evy exponent evaluated at $1$. Since $f(t,\cdot)$ is continuous for any $t>0$ we conclude that $\rho$ is continuous. Thus $f$ is contiuous. Consequently, $Y$ is stochastically continuous. Let $t,h\in\mathbb R_+$. Then
$$\E\exp(i\<u\vert Y(t+h)-Y(h)\>) = \E\exp(i(\<u\vert Y\>(t+h)-\<u\vert Y\>(h))) = f(t,u)$$
for any $t\in\mathbb R_+$, $u\in H$ because $\<u\vert Y\>$ is a L\'evy process. Thus $Y$ has stationary increments.
\end{proof}

\subsection{Estimates}
\begin{lem}\label{L:schranke der Wachstumsfunktion Poissonfall}
 Let $N_1,\dots,N_d$ be compound Poisson processes on $H_1,\dots,H_d$ and $g:H\rightarrow\mathbb R_+$ be measurable and subadditive with $g(0)=0$. Let $N:=(N_1,\dots,N_d)$ be a L\'evy process. Then
$$Eg(N(\theta)) \leq \vert\theta\vert\sqrt{d}\int_H g(x)\mu(dx)$$
for any $\theta\in\mathbb R_+^d$ where $\mu$ is the L\'evy measure (or jump intensity measure) of $N$.
\end{lem}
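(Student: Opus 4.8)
The plan is to reduce the estimate to the classical compound Poisson representation, coordinate by coordinate, and then apply subadditivity twice. First I would note that $N=(N_1,\dots,N_d)$ is a compound Poisson process on $H$ whose jump intensity measure decomposes as $\mu=\sum_{j=1}^d\mu_j^{\eta_j}$, where $\mu_j$ is the (finite) jump intensity measure of $N_j$ and $\eta_j\colon H_j\rightarrow H$ is the canonical embedding, exactly as the triplet of $L$ is expressed through those of $L_1,\dots,L_d$ in the Remark after the mathematical preliminaries. Put $\lambda_j:=\mu_j(H_j)$; if $\lambda_j=0$ then $N_j\equiv 0$ and both the $j$-th summand below and $\int_H g\,d\mu_j^{\eta_j}$ vanish, so we may assume $\lambda_j>0$.

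Using independence of $N_1,\dots,N_d$ I would write $N(\theta)=\sum_{j=1}^d\eta_j\bigl(N_j(\theta_j)\bigr)$ in $H$ and apply subadditivity of $g$ (iterated $d$ times, using $g(0)=0$) to obtain $g(N(\theta))\le\sum_{j=1}^d g\bigl(\eta_j(N_j(\theta_j))\bigr)$. For fixed $j$, represent $N_j(\theta_j)=\sum_{k=1}^{P_j}Y_{j,k}$ with $P_j$ Poisson distributed of mean $\theta_j\lambda_j$ and $(Y_{j,k})_{k\ge1}$ i.i.d.\ of law $\lambda_j^{-1}\mu_j$, independent of $P_j$. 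Linearity of $\eta_j$ and subadditivity of $g$ (together with $g(0)=0$ on $\{P_j=0\}$) give $g(\eta_j(N_j(\theta_j)))\le\sum_{k=1}^{P_j}g(\eta_j(Y_{j,k}))$, and taking expectations, conditioning on $P_j$ and using Wald's identity yields
$$\E\,g\bigl(\eta_j(N_j(\theta_j))\bigr)\le \E[P_j]\,\E\,g\bigl(\eta_j(Y_{j,1})\bigr)=\theta_j\lambda_j\cdot\frac1{\lambda_j}\int_H g\,d\mu_j^{\eta_j}=\theta_j\int_H g\,d\mu_j^{\eta_j}.$$

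Summing over $j$, using $g\ge0$ and $\mu=\sum_{j=1}^d\mu_j^{\eta_j}$, I would conclude
$$\E\,g(N(\theta))\le\sum_{j=1}^d\theta_j\int_H g\,d\mu_j^{\eta_j}\le\Bigl(\max_{1\le j\le d}\theta_j\Bigr)\int_H g\,d\mu\le|\theta|\int_H g\,d\mu\le\sqrt d\,|\theta|\int_H g\,d\mu,$$
which is the assertion (in fact with the factor $\sqrt d$ to spare). I do not expect any serious obstacle: the only points needing a little care are the measurability of $x\mapsto g(\eta_j(x))$ so that the conditioning/Fubini step is legitimate, and the bookkeeping of the degenerate cases $\lambda_j=0$ and $P_j=0$, both absorbed by the hypothesis $g(0)=0$. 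Alternatively one may avoid Wald's identity by expanding the compound Poisson law $\exp(-\theta_j\lambda_j)\exp(\theta_j\mu_j^{\eta_j})$ as a power series in $\theta_j$ and using the termwise bound $\int_H g\,d(\mu_j^{\eta_j})^{*n}\le n\lambda_j^{n-1}\int_H g\,d\mu_j^{\eta_j}$, which again comes from subadditivity.
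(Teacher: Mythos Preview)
Your proof is correct and follows essentially the same route as the paper: decompose $N(\theta)$ along coordinates via subadditivity, bound $\E\,g\bigl(\eta_j(N_j(\theta_j))\bigr)\le\theta_j\int_H g\,d\mu_j^{\eta_j}$ using the compound Poisson structure (the paper does this via the convolution-series expansion you mention as an alternative, rather than Wald), and sum. Your final aggregation $\sum_j\theta_j\int_H g\,d\mu_j^{\eta_j}\le(\max_j\theta_j)\int_H g\,d\mu\le|\theta|\int_H g\,d\mu$ is in fact slightly sharper than the paper's, which instead estimates $\int_{H_j}g_j\,d\mu_j\le\int_H g\,d\mu$ coordinatewise and then uses $\sum_j\theta_j\le\sqrt d\,|\theta|$.
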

\begin{proof}
 Let $j\in\{1,\dots,d\}$, $\theta\in\mathbb R_+^d$ and define $g_j(x):=g(\eta_j(x))$ for any $x\in H_j$. \cite[Definition 4.14]{peszat.zabczyk.07} states that $P^{N_j(\theta_j)} = e^{-\lambda_j \theta_j}\exp(\theta\mu_j)$ where $\lambda_j:=\mu_j(H_j)$ and $\mu_j$ is the L\'evy measure of $N_j$. Moreover,
\begin{eqnarray*}
 \int_{H_j}g_j(x)(\mu_j^{*k})(dx) &\leq & \int_{H_j}\cdots\int_{H_j} (g_j(x_1)+\dots+g_j(x_k)) \mu_j(dx_1)\dots\mu_j(dx_k) \\
 &=& \int_{H_j}g_j(x)\mu_j(dx) k\lambda_j^{k-1}.
\end{eqnarray*}
for any $k\in\mathbb N$. Thus 
$$\int_{H_j}g_j(x)(\exp(\theta_j\mu_j))(dx) \leq \theta_j\int_Hg_j(x)\mu_j(dx)e^{\lambda_j \theta_j}$$
and hence 
$$Eg_j(N_j(\theta_j)) = \int_{H_j}g_j(x)P^{L_j(\theta_j)}\leq \theta_j\int_{H_j}g_j(x)\mu_j(dx).$$
Since $\int_{H_j}g_j(x)\mu_j(dx)\leq\int_Hg(x)\mu(dx)$ the assertet inequality follows.
\end{proof}
\begin{rem}
The inequality in the Lemma above is sharp. Indeed, if $N_1=\dots=N_d$ are the same Poisson process with intensity $1$, $g:\mathbb R^d\rightarrow \mathbb R,x\mapsto \sum_{j=1}^d\vert x_j\vert$ and $\theta=(1,\dots,1)$, then
$$\E g(N(\theta)) = d = \sqrt{d}\vert \theta\vert\int_H g(x)\mu(dx).$$
\end{rem}

\begin{lem}\label{L:schranke der Wachstumsfunktion Martingalfall}
 Let $M_1,\dots,M_d$ be a mean zero and square integrable L\'evy processes on $H_1,\dots,H_d$ respectively such that $M:=(M_1,\dots,M_d)$ is a L\'evy process. Let $\alpha\in(0,2]$. Then there is a constant $C>0$ such that
$$\E(\vert M(\theta)\vert^\alpha) \leq \vert \theta\vert^{\alpha/2}C$$ 
for any $\theta\in\mathbb R_+^d$. Moreover, the constant $C$ can be chosen as
$$\left(\tr(\Gamma)+\int\vert x\vert^2\mu_j(dx)\right)^{\alpha/2}$$
where $(\beta,\Gamma,\mu)$ is the characteristics of $M$.
\end{lem}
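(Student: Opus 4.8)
The plan is to bound $\E(\vert M(\theta)\vert^\alpha)$ by first handling the case $\alpha=2$ directly, then deducing the general case $\alpha\in(0,2)$ by Jensen's inequality. For $\alpha=2$, since $M$ is mean zero and square integrable, Proposition \ref{P:Quadratstruktur} gives
$$\E(\vert M(\theta)\vert^2)=\E(\vert M(\theta)-\E M(\theta)\vert^2).$$
To make this explicit in $\theta$, I would decompose $\vert M(\theta)\vert^2=\sum_{j=1}^d\vert M_j(\theta_j)\vert^2$ (using that $M=(M_1,\dots,M_d)$ and the $H_j$ are orthogonal in $H$), apply Proposition \ref{P:Quadratstruktur} to each coordinate process $M_j$, and obtain
$$\E(\vert M(\theta)\vert^2)=\sum_{j=1}^d\theta_j\left(\tr(\Gamma_j)+\int_{H_j}\vert x\vert^2\mu_j(dx)\right)\leq\vert\theta\vert\,\widetilde C,$$
where $\widetilde C:=\tr(\Gamma)+\int_H\vert x\vert^2\mu(dx)$ and $\Gamma=\Gamma_1\times\cdots\times\Gamma_d$, $\mu=\sum_j\mu_j^{\eta_j}$ as in the opening Remark; here one uses $\sum_j\theta_j\le\sqrt d\,\vert\theta\vert$, or more simply $\theta_j\le\vert\theta\vert$ together with the additivity of the trace and of the integral over the orthogonal pieces, so that $\sum_j\theta_j(\tr\Gamma_j+\int\vert x\vert^2\mu_j(dx))\le\vert\theta\vert(\tr\Gamma+\int\vert x\vert^2\mu(dx))$. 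This gives the $\alpha=2$ claim with $C=\widetilde C$, and inspecting the bound shows $C$ may be taken as $\widetilde C^{\alpha/2}$ in the stated form once we pass to general $\alpha$.

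For $\alpha\in(0,2)$, I would use that $t\mapsto t^{\alpha/2}$ is concave on $\mathbb R_+$, so by Jensen's inequality
$$\E(\vert M(\theta)\vert^\alpha)=\E\left((\vert M(\theta)\vert^2)^{\alpha/2}\right)\leq\left(\E(\vert M(\theta)\vert^2)\right)^{\alpha/2}\leq\left(\vert\theta\vert\,\widetilde C\right)^{\alpha/2}=\vert\theta\vert^{\alpha/2}\,\widetilde C^{\alpha/2}.$$
Thus $C:=\widetilde C^{\alpha/2}=\left(\tr(\Gamma)+\int_H\vert x\vert^2\mu(dx)\right)^{\alpha/2}$ works for all $\alpha\in(0,2]$, which is exactly the asserted value.

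**The main obstacle** I anticipate is purely bookkeeping: making sure the characteristics $(\beta,\Gamma,\mu)$ of $M$ relate correctly to the characteristics $(\beta_j,\Gamma_j,\mu_j)$ of the $M_j$, and that $\tr(\Gamma)=\sum_j\tr(\Gamma_j)$ and $\int_H\vert x\vert^2\mu(dx)=\sum_j\int_{H_j}\vert x\vert^2\mu_j(dx)$ follow from the product/embedding structure recorded in the opening Remark. Everything else — orthogonality of the coordinate subspaces, Proposition \ref{P:Quadratstruktur} applied coordinatewise, and Jensen — is routine. One should also note that if $M$ is not square integrable the statement is vacuous in the sense that $\widetilde C=\infty$ and the inequality holds trivially; but since the hypothesis explicitly assumes square integrability, $\widetilde C<\infty$ and the bound is meaningful.
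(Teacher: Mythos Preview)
Your proposal is correct and follows essentially the same approach as the paper: decompose $\vert M(\theta)\vert^2$ coordinatewise, use the linear-in-time variance formula for each mean-zero square-integrable $M_j$, bound $\sum_j\theta_j c_j\le\vert\theta\vert\sum_j c_j$, and then apply Jensen's inequality for $\alpha<2$. The paper expresses the constant as $C=\E\vert M(1)\vert^2$ rather than writing out the characteristics, but by Proposition~\ref{P:Quadratstruktur} these coincide, so your identification of $C$ is exactly what is claimed.
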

\begin{proof}
 The case $\alpha=2$ follows from the Pythagorean theorem. Indeed,
\begin{eqnarray*}
\E(\vert M(\theta)\vert^2) &=& \sum_{j=1}^d \E(\vert M_j(\theta_j)\vert^2) \\
 &=& \sum_{j=1}^d \theta_j\E(\vert M_j(1)\vert^2) \\
 &\leq& \vert \theta\vert \E(\vert M(1)\vert^2) \\
 &=& \vert \theta\vert C
\end{eqnarray*}
for any $\theta\in\mathbb R_+^d$ where $C:=\E(\vert M(1)\vert^2)$. 

The other cases follow from the case $\alpha=2$ by Jensens inqueality. Indeed, we have
\begin{eqnarray*}
\E(\vert M(\theta)\vert^{\alpha}) &=& (\E(\vert M(\theta)\vert^2))^{\alpha/2} \\
  &\leq & (\vert \theta\vert C)^{\alpha/2} \\
  &=& \vert \theta\vert^{\alpha/2}C^{\alpha/2}.
\end{eqnarray*}
for any $\alpha\in(0,2]$ and any $\theta\in\mathbb R_+^d$.
\end{proof}

\begin{lem}\label{L:schranke der Wachstumsfunktion}
 Let $L$ be integrable. Then there are constants $C_1,C_2\in\mathbb R_+$ such that
$$\E\left(\vert L(\theta)\vert\right) \leq \vert \theta\vert C_1 + \vert \theta\vert^{1/2} C_2$$
for any $\theta\in\mathbb R_+^d$.
\end{lem}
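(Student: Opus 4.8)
The plan is to decompose each component $L_j$ into its small-jump part, which is square integrable and, after centring, a mean zero L\'evy process, together with its large-jump part, which is a compound Poisson process; the first contributes the $\vert\theta\vert^{1/2}$ term via Lemma~\ref{L:schranke der Wachstumsfunktion Martingalfall} and the second the $\vert\theta\vert$ term via Lemma~\ref{L:schranke der Wachstumsfunktion Poissonfall}, while the centring produces a linear drift that is also $O(\vert\theta\vert)$.

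First I would note that, since $\vert L_j(1)\vert\leq\vert L(1)\vert$, integrability of $L$ forces each $L_j$ to be integrable, whence $\int_{\{\vert x\vert>1\}}\vert x\vert\,\nu_j(dx)<\infty$ for all $j$ by Proposition~\ref{P:integrierbarkeit}. For each $j$, \cite[Theorem 4.23]{peszat.zabczyk.07} provides independent L\'evy processes with $L_j=L_j^{(1)}+L_j^{(2)}$, where $L_j^{(1)}$ has L\'evy measure $\nu_j$ restricted to $\{\vert x\vert\leq1\}$ and jumps bounded by $1$, and $L_j^{(2)}$ is compound Poisson with L\'evy measure $\nu_j(\cdot\cap\{\vert x\vert>1\})$. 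As $\int_{\{\vert x\vert\leq1\}}\vert x\vert^2\,\nu_j(dx)<\infty$, Proposition~\ref{P:Quadratstruktur} shows $L_j^{(1)}$ is square integrable; setting $m_j:=\E(L_j^{(1)}(1))$, the process $M_j(t):=L_j^{(1)}(t)-tm_j$ is a mean zero and square integrable $H_j$-valued L\'evy process. Likewise $L_j^{(2)}$ is integrable, since its L\'evy measure is finite with finite first moment by the previous step.

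Using independence of $L_1,\dots,L_d$, the processes $M:=(M_1,\dots,M_d)$ and $N:=(L_1^{(2)},\dots,L_d^{(2)})$ are L\'evy processes on $H$, with $M$ mean zero and componentwise square integrable and $N$ componentwise compound Poisson. For $\theta\in\mathbb R_+^d$ the decomposition reads $L(\theta)=M(\theta)+N(\theta)+(\theta_1m_1,\dots,\theta_dm_d)$, hence
\[
\E\vert L(\theta)\vert\leq\E\vert M(\theta)\vert+\E\vert N(\theta)\vert+\big\vert(\theta_1m_1,\dots,\theta_dm_d)\big\vert .
\]
Lemma~\ref{L:schranke der Wachstumsfunktion Martingalfall} with $\alpha=1$ bounds $\E\vert M(\theta)\vert$ by $\vert\theta\vert^{1/2}C_2$ for a suitable $C_2$; Lemma~\ref{L:schranke der Wachstumsfunktion Poissonfall} applied to the measurable subadditive function $x\mapsto\vert x\vert$, which vanishes at $0$, bounds $\E\vert N(\theta)\vert$ by $\vert\theta\vert\sqrt d\int_H\vert x\vert\,\mu_N(dx)$, where the jump intensity measure $\mu_N$ of $N$ satisfies $\int_H\vert x\vert\,\mu_N(dx)=\sum_{j=1}^d\int_{\{\vert x\vert>1\}}\vert x\vert\,\nu_j(dx)<\infty$ by the first step; and $\big\vert(\theta_1m_1,\dots,\theta_dm_d)\big\vert\leq\vert\theta\vert\max_{j}\vert m_j\vert$. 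Summing, the claim holds with $C_1:=\sqrt d\int_H\vert x\vert\,\mu_N(dx)+\max_j\vert m_j\vert$ and $C_2$ as above.

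I do not expect a genuine obstacle: the only points needing a moment of care are verifying that $M$ and $N$ are L\'evy processes on $H$, which is immediate from the independence of the $L_j$, and that $L_j^{(1)}$ is square integrable, which is automatic from the general bound $\int_{\{\vert x\vert\leq1\}}\vert x\vert^2\,\nu_j(dx)<\infty$ for L\'evy measures; after that the proof is just the triangle inequality combined with the two estimation lemmas of this subsection.
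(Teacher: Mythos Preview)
Your proof is correct and follows essentially the same route as the paper: decompose $L$ into a drift, a centred square-integrable (small-jump) martingale part $M$, and a compound Poisson (large-jump) part $N$, then apply Lemma~\ref{L:schranke der Wachstumsfunktion Martingalfall} to $M$ and Lemma~\ref{L:schranke der Wachstumsfunktion Poissonfall} to $N$. The only difference is cosmetic: the paper invokes the decomposition \cite[Theorem~4.23]{peszat.zabczyk.07} directly for $L$ and writes $L(\theta)=a\theta+M(\theta)+N(\theta)$ without spelling out the component-wise construction, whereas you make the decomposition of each $L_j$ explicit before reassembling; your version is slightly more careful about why $M$ and $N$ are L\'evy processes on $H$ and why the large-jump integral is finite (via Proposition~\ref{P:integrierbarkeit} rather than the external reference the paper uses), but the substance is identical.
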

\begin{proof}
 Define $g:=\vert\cdot\vert$. Then $g$ is subadditive. \cite[Theorem 4.23]{peszat.zabczyk.07} implies that $L(\theta)=a\theta+M(\theta)+N(\theta)$ for some $a\in H$ a mean zero and square integrable L\'evy process $M$ and a compound Poisson process $N$ where the L\'evy measure of $N$ is given by $\mu(A)=\nu(A\cap\{x\in H:\vert x\vert>1\}$ for any $A\in\mathcal B(H)$. The two previous Lemmas yield
\begin{eqnarray*}
 \E g(L(\theta)) &\leq& g(a\theta) + \E g(N(\theta)) + \E g(M(\theta)) \\
  &\leq & \vert \theta\vert\vert a\vert + \vert \theta\vert \int_{\vert x\vert>1}g(x)\mu(dx) + \vert \theta\vert^{1/2} C_2
\end{eqnarray*}
for some constant $C_2>0$ and any $\theta\in\mathbb R_+^d$. \cite[Proposition 4.18]{peszat.zabczyk.07} yields that $C_3:=\int_{\vert x\vert>1}g(x)\mu(dx)<\infty$. Define $C_1:=\vert a\vert+C_3$. Then
 $$\E(\vert L(\theta)\vert) \leq \vert \theta\vert\vert C_1 + \vert\theta\vert^{1/2} C_2$$
as claimed.
\end{proof}

We now state some technical Lemmas which are needed for the proof of Theorem \ref{S:subordiniertes tripel}. The first one essentially states that $\mathbb R_+^d\rightarrow \mathbb R,\theta\mapsto E f(L(\theta))$ growth at most linearly for smooth functions $f$. The second one states that this is also true for $\theta\mapsto \E\chi(L(\theta))$.
\begin{lem}\label{L:Wachstum}
 Let $f:H\rightarrow \mathbb R$ be bounded and uniformly continuous such that its derivatives up to order two are also bounded and uniformly continous. Then there is a constant $C>0$ such that
$$\vert \E(f(L(\theta)))-f(0)\vert \leq \sum_{j=1}^d\theta_jC$$
for any $\theta\in\mathbb R_+^d$. Moreover, the constant $C$ can be chosen as
$$\sup_{x\in H}2\vert f(x)\vert\nu(\vert x\vert>1)+\vert b\vert\Vert Df(x)\Vert_{H'}+\frac{1}{2}\left(\tr(Q)+\int_{\{\vert x\vert\leq 1}\vert x\vert^2\nu(dx)\right)\Vert D^2f(x)\Vert_{\mathrm{op}}$$
where $\Vert\cdot\Vert_{H'}$ denotes the operator norm on the space of linear functionals from $H$ to $\mathbb R$ and $\Vert\cdot\Vert_{\mathrm{op}}$ denotes the operator norm on the space of linear functions on $H$.
\end{lem}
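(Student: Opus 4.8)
The plan is to realise $L(\theta)$ as the time-$1$ value of a bona fide $H$-valued L\'evy process and then invoke a Dynkin-type identity. Fix $\theta\in\mathbb R_+^d$ and put $\widetilde L(t):=L(t\theta)=(L_1(t\theta_1),\dots,L_d(t\theta_d))$ for $t\geq0$. Since $L_1,\dots,L_d$ are independent L\'evy processes, $\widetilde L$ is an $H$-valued L\'evy process with $\widetilde L(1)=L(\theta)$, and (as in the proof of Theorem~\ref{S:Fourier-Darstellung}) its L\'evy exponent is $u\mapsto\sum_{j=1}^d\theta_j\phi_j(u_j)$. Comparing this with the L\'evy--Khintchine formula for the $\phi_j$ (\cite[Theorem 4.27]{peszat.zabczyk.07}) shows that the characteristics of $\widetilde L$ are $(\theta b,\theta Q,\sum_{j=1}^d\theta_j\nu_j^{\eta_j})$; equivalently, the generator of $\widetilde L$ acts on the $C^2_b$-function $f$ by $\mathcal A^\theta f=\sum_{j=1}^d\theta_j\mathcal A_jf$, where
\[\mathcal A_jf(x):=\langle b_j\vert D_jf(x)\rangle_j+\tfrac12\tr\!\big(Q_jD_j^2f(x)\big)+\int_{H_j}\!\big(f(x+\eta_jz)-f(x)-\langle D_jf(x)\vert\chi(z)\rangle_j\big)\nu_j(dz),\]
and $D_jf$, $D_j^2f$ denote the gradient and Hessian of $f$ in the $j$-th coordinate.

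The heart of the matter is the Dynkin identity $\E f(\widetilde L(t))-f(0)=\int_0^t\E\big(\mathcal A^\theta f(\widetilde L(s))\big)ds$ for $t\geq0$. I would derive it from It\^o's formula for $H$-valued semimartingales (\cite{peszat.zabczyk.07}) applied to $s\mapsto f(\widetilde L(s))$, after splitting $\widetilde L$ into its drift, its Gaussian part, its compensated jumps of size $\leq1$, and its finite-activity jumps of size $>1$; on taking expectations the stochastic-integral and compensated-jump terms drop out because $Df$ is bounded, $\int_H(1\wedge\vert y\vert^2)\,\widetilde\nu(dy)<\infty$, and $f$ itself is bounded (which controls the large-jump part). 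A stochastic-calculus-free variant is to split $\widetilde L=\widetilde L^{\le}+\widetilde L^{>}$ into the parts with jumps $\leq1$ and $>1$, exploit that $\widetilde L^{\le}$ has finite exponential moments together with a second-order Taylor expansion of $f$ and the discretisation $\widetilde L(1)=\sum_{k=1}^n(\widetilde L(k/n)-\widetilde L((k-1)/n))$, and estimate the compound-Poisson part via $\vert\E f(x+\widetilde L^{>}(h))-f(x)\vert\leq2(\sup_{z\in H}\vert f(z)\vert)(1-e^{-h\widetilde\nu(\vert y\vert>1)})$. Making this generator identity rigorous on $C^2_b$ in the infinite-dimensional setting is the one genuinely delicate point; everything that follows is bookkeeping and elementary estimates.

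Granting the identity at $t=1$ gives $\vert\E f(L(\theta))-f(0)\vert\leq\sup_{x\in H}\vert\mathcal A^\theta f(x)\vert\leq\sum_{j=1}^d\theta_j\sup_{x\in H}\vert\mathcal A_jf(x)\vert$, so it remains to bound $\sup_x\vert\mathcal A_jf(x)\vert$ by the constant $C$ in the statement, uniformly in $j$. This is routine: the drift term is $\leq\vert b_j\vert\,\Vert D_jf(x)\Vert_{H_j'}\leq\vert b\vert\,\Vert Df(x)\Vert_{H'}$; the Gaussian term is $\leq\tfrac12\tr(Q_j)\Vert D_j^2f(x)\Vert_{\mathrm{op}}\leq\tfrac12\tr(Q)\Vert D^2f(x)\Vert_{\mathrm{op}}$; in the jump integral a second-order Taylor estimate bounds the integrand on $\{\vert z\vert_j\leq1\}$ by $\tfrac12\Vert D^2f\Vert_\infty\vert z\vert_j^2$ and on $\{\vert z\vert_j>1\}$ by $2\sup_{z\in H}\vert f(z)\vert$, after which $\nu_j^{\eta_j}(\vert y\vert>1)\leq\nu(\vert x\vert>1)$ and $\int_{\{\vert z\vert_j\leq1\}}\vert z\vert_j^2\nu_j(dz)\leq\int_{\{\vert x\vert\leq1\}}\vert x\vert^2\nu(dx)$ do the rest. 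Here I repeatedly use the block relations $\nu=\sum_j\nu_j^{\eta_j}$, $\tr(Q)=\sum_j\tr(Q_j)$, $\vert b_j\vert\leq\vert b\vert$, $\Vert D_jf\Vert_{H_j'}\leq\Vert Df\Vert_{H'}$, $\Vert D_j^2f\Vert_{\mathrm{op}}\leq\Vert D^2f\Vert_{\mathrm{op}}$ coming from the product structure of $L=(L_1,\dots,L_d)$. Combining the two second-derivative contributions into $\tfrac12\big(\tr(Q)+\int_{\{\vert x\vert\leq1\}}\vert x\vert^2\nu(dx)\big)\Vert D^2f\Vert_{\mathrm{op}}$ and taking suprema yields a permissible $C$ of the form displayed in the statement, which completes the argument.
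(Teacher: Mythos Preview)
Your argument is correct and rests on the same idea as the paper's proof: bound $\vert\E f(L(\theta))-f(0)\vert$ by the supremum of the generator applied to $f$, via the Dynkin/martingale identity. The paper handles that identity simply by citing \cite[Theorem~5.4]{peszat.zabczyk.07} (which places $UC_b^2$ in the generator domain and gives the martingale $f(L(t))-\int_0^t\mathcal A f(L(s))\,ds$), so your ``genuinely delicate point'' can be dispatched by the same reference rather than an It\^o or discretisation argument. The only organisational difference is in the passage to general $d$: the paper proves the $d=1$ case and then extends by a simple induction on $d$, whereas you time-scale to $\widetilde L(t)=L(t\theta)$ and apply the identity once to this single $H$-valued L\'evy process; either route produces the same explicit constant.
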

\begin{proof}We first show the inequality for $d=1$. Let $UC_b^2$ be the set of functions which are bounded and uniformly continuous and whose derivatives up to order two are also bounded and uniformly continous. \cite[Theorem 5.4]{peszat.zabczyk.07} yields that $L$ is a Markov process and the domain of its generator $\mathcal A$ contains $UC_b^2$ and
$$\mathcal A f(x) = \<b,Df(x)\> +\frac{1}{2}\tr(Q)D^2f(x)+\int_H(f(x+y)-f(x)-1_{\{\vert y\vert\leq1\}}\<y,Df(x)\>)\nu(dy)$$
for any $x\in H$. In particular,
$$M(t) := f(L(t)) - \int_0^t \mathcal A f(L(s)) ds$$
is a martingale w.r.t.\ the filtration generated by $L$. Define $C:=\sup_{\{x\in H\}} \mathcal Af(x)$. The inequality follows.

For arbitrary $d$ the inequality follows from a simple induction.
\end{proof}

\begin{lem}\label{L:Wachstum chi}
 There is a constant $C>0$ such that
$$\vert \E\chi(L(\theta))\vert \leq \vert \theta\vert C$$
for any $\theta\in\mathbb R_+^d$.
\end{lem}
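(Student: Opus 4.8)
The plan is to split the (non-smooth, $H$-valued) truncation $\chi$ into a smooth part $h$, handled by testing against unit vectors of $H$ and applying Lemma~\ref{L:Wachstum}, and a bounded remainder $r$ which is supported away from the origin and is dominated by a scalar cut-off, again handled by Lemma~\ref{L:Wachstum}. Fix $\kappa\in C^\infty([0,\infty),[0,1])$ with $\kappa\equiv1$ on $[0,1]$ and $\kappa\equiv0$ on $[2,\infty)$, and put $h(x):=x\,\kappa(\vert x\vert)$ and $r(x):=h(x)-\chi(x)$ for $x\in H$. Near the origin $h(x)=x$, so $h$ is smooth on $H$, bounded by $2$, with bounded first and second derivatives, $D^2h$ vanishing outside $\{1\le\vert x\vert\le2\}$; while $r$ vanishes on $\{\vert x\vert\le1\}$ and satisfies $\vert r(x)\vert\le2\bigl(1-\kappa(2\vert x\vert)\bigr)$ for all $x$. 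Since $\vert\chi(L(\theta))\vert\le1$, $\vert h(L(\theta))\vert\le2$ and $\vert r(L(\theta))\vert\le2$, all three are Bochner integrable and $\E\chi(L(\theta))=\E h(L(\theta))-\E r(L(\theta))$.

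For the smooth part I would estimate $\vert\E h(L(\theta))\vert=\sup_{\vert u\vert\le1}\vert\E\<h(L(\theta))\vert u\>\vert$. For $u\in H$ with $\vert u\vert\le1$ set $f_u(x):=\<h(x)\vert u\>$. Then $f_u$ is bounded and uniformly continuous with bounded, uniformly continuous derivatives up to order two, so Lemma~\ref{L:Wachstum} applies; and since $f_u(0)=0$, $\vert f_u(x)\vert\le2\vert u\vert$, $\Vert Df_u(x)\Vert_{H'}\le\Vert Dh(x)\Vert_{\mathrm{op}}\vert u\vert$, $\Vert D^2f_u(x)\Vert_{\mathrm{op}}\le\Vert D^2h(x)\Vert_{\mathrm{op}}\vert u\vert$, and $\nu(\{\vert x\vert>1\})$, $\int_{\vert x\vert\le1}\vert x\vert^2\nu(dx)$ and $\tr(Q)$ are finite, the constant that Lemma~\ref{L:Wachstum} yields for $f_u$ is at most $\vert u\vert$ times a constant $C_1$ not depending on $u$. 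Hence
\[
\vert\E h(L(\theta))\vert=\sup_{\vert u\vert\le1}\bigl\vert\E f_u(L(\theta))\bigr\vert\le C_1\sum_{j=1}^d\theta_j .
\]

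For the remainder, the scalar function $g(x):=1-\kappa(2\vert x\vert)$ belongs to the function class of Lemma~\ref{L:Wachstum}, is non-negative and vanishes at $0$, so that lemma supplies a constant $C_2>0$ with $\E g(L(\theta))\le C_2\sum_j\theta_j$; since $\vert r(x)\vert\le2g(x)$ this gives $\vert\E r(L(\theta))\vert\le\E\vert r(L(\theta))\vert\le2\E g(L(\theta))\le2C_2\sum_j\theta_j$. Adding the two estimates and using $\sum_j\theta_j\le\sqrt d\,\vert\theta\vert$ yields $\vert\E\chi(L(\theta))\vert\le(C_1+2C_2)\sum_j\theta_j\le C\vert\theta\vert$ with $C:=\sqrt d\,(C_1+2C_2)$, which is the assertion.

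The step I expect to be the main obstacle is the verification that $h$ --- equivalently each $f_u$ --- really meets the hypotheses of Lemma~\ref{L:Wachstum}, i.e.\ that $f_u$, $Df_u$ and $D^2f_u$ are bounded and uniformly continuous. The origin is harmless because $\kappa$ is locally constant there, so $h(x)=x$ is smooth; the spheres $\{\vert x\vert=1\}$ and $\{\vert x\vert=2\}$ are harmless because $\kappa$ is $C^\infty$; and in the explicit formulas for $Dh$ and $D^2h$ the terms carrying negative powers of $\vert x\vert$ are multiplied by $\kappa'(\vert x\vert)$ or $\kappa''(\vert x\vert)$, which vanish near $0$, so all these derivatives are bounded (with higher-order parts vanishing outside $\{1\le\vert x\vert\le2\}$) and uniformly continuous. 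One also has to note that the constant in Lemma~\ref{L:Wachstum} for $f_u$ scales linearly in $\vert u\vert$, which is immediate from its explicit form; the rest is a direct application of Lemma~\ref{L:Wachstum} together with $\vert\E Y\vert\le\E\vert Y\vert$ for $H$-valued $Y$.
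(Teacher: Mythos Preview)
Your proof is correct and follows essentially the same approach as the paper's: both split $\chi$ into a smooth $H$-valued approximation (your $h$, the paper's $\chi_1$) plus a remainder supported away from the origin, handle the smooth part by testing against unit functionals and applying Lemma~\ref{L:Wachstum} with a constant uniform over the unit ball (the paper phrases this via Hahn--Banach, you via $\sup_{\vert u\vert\le1}$), and control the remainder by a scalar $UC_b^2$ cut-off (your $2g$, the paper's $\chi_2$) to which Lemma~\ref{L:Wachstum} applies directly. The only differences are cosmetic---your explicit construction via a mollifier $\kappa$ versus the paper's more abstract description of $\chi_1$ and $\chi_2$.
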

\begin{proof}
 Let $\chi_1: H\rightarrow H$ such that $\chi_1$ is twice continuously differentiable, its support is contained in the centered ball of radius $2$, $\chi_1(x)=x$ for $x\in H$ with $\vert x\vert\leq 1$ and its first two derivatives vanish on its zeros except for the zero in $0$. Then the restriction $f$ of $\vert\cdot\vert\circ\chi_1$ to the set $\{x\in H:\vert x\vert\geq1\}$ has a twice continuously differentiable continuation $\chi_2:H\rightarrow\mathbb R_+$ such that $\chi_2$ vanishes on the centered ball of radius $0.5$. By Lemma \ref{L:Wachstum} for each $\alpha\in H'$ there is a constant $C_\alpha$ such that 
$$\vert \E(\alpha\circ\chi_1)(\theta)\vert \leq C_\alpha\sum_{j=1}^d\theta_j\leq C_1\sqrt{d}\vert \theta\vert$$
for any $\theta\in\mathbb R_+^d$ where $C_1$ is defined by
\begin{eqnarray*}
&&\sup_{x\in H}\Bigg(2\vert \chi_1(x)\vert\nu(\vert x\vert>1)+\vert b\vert\Vert D\chi_1(x)\Vert_{op}\\
&&\hspace{1cm} +\frac{1}{2}\left(\tr(Q)+\int_{\{\vert x\vert\leq 1\}}\vert x\vert^2\nu(dx)\right)\Vert D^2\chi_1(x)(y,z)\Vert_{B^2} \Bigg)
\end{eqnarray*}
and where $\Vert\cdot\Vert_{\mathrm{op}}$ denotes the operator norm on the space of linear functions on $H$ and $\Vert\cdot\Vert_{B^2}$ denotes the operator norm of bilinear functions from $H\times H$ to $H$. For $\theta\in\mathbb R_+^d$ the Hahn-Banach theorem \cite[Theorem 3.2]{rudin.87} yields that there is $\alpha_\theta\in H'$ such that $\Vert \E\chi_1(L(\theta))\Vert = \alpha_\theta(E\chi_1(L(\theta)))$ and $\Vert\alpha_\theta\Vert_{op}=1$. Hence
$$\Vert \E\chi_1(L(\theta))\Vert \leq C_1\sqrt{d}\vert \theta\vert.$$
Lemma \ref{L:Wachstum} yields that there is a constant $C_2$ such that $\E\chi_2(L(\theta))\leq \vert \theta\vert C_2$. Then
\begin{eqnarray*}
 \vert \E\chi(L(\theta))\vert &\leq& \vert \E\chi_1(L(\theta))\vert + \vert \E(\chi_1-\chi)(L(\theta))\vert \\
 &\leq& C_1\sqrt{d}\vert \theta\vert + \E\chi_2(L(\theta)) \\
 &\leq& C\vert \theta\vert
\end{eqnarray*}
where $C:=\sqrt{d}C_1+C_2$.
\end{proof}

\begin{lem}\label{L:Wachstum g}
 Let $u\in H$ and 
$$g:\mathbb R_+^d\rightarrow\mathbb C,u\mapsto \E\left\vert e^{i\<u\vert L(\theta)\>}-1-i\<u\vert\chi(L(\theta))\>\right\vert.$$
Then there is a constant $C>0$ such that $\vert g(\theta)\vert\leq \vert\theta\vert C$ for any $\theta\in\mathbb R_+^d$.
\end{lem}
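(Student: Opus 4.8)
The plan is to dominate the integrand pointwise by a constant (depending on $u$) times a fixed smooth function $G$ to which Lemma~\ref{L:Wachstum} applies, and then to invoke that lemma. Write $f(x):=e^{i\<u\vert x\>}-1-i\<u\vert\chi(x)\>$, so that $g(\theta)=\E\vert f(L(\theta))\vert$.

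First I would record two elementary pointwise estimates. For $\vert x\vert\le1$ one has $\chi(x)=x$, hence $f(x)=e^{i\<u\vert x\>}-1-i\<u\vert x\>$, and the scalar inequality $\vert e^{it}-1-it\vert\le t^2/2$ (valid for $t\in\mathbb R$), applied with $t=\<u\vert x\>$, gives $\vert f(x)\vert\le\frac12\<u\vert x\>^2\le\frac12\vert u\vert^2\vert x\vert^2$. For $\vert x\vert>1$ one has $\chi(x)=0$, hence $f(x)=e^{i\<u\vert x\>}-1$ and $\vert f(x)\vert\le2$. Consequently, with $K:=\frac12\vert u\vert^2\vee2$,
$$\vert f(x)\vert\le K\left(\vert x\vert^21_{\{\vert x\vert\le1\}}+1_{\{\vert x\vert>1\}}\right)\le K\,G(x)\qquad(x\in H),$$
where I choose $G(x):=g(\vert x\vert^2)$ for a fixed $C^\infty$ function $g:\mathbb R_+\to\mathbb R_+$ which is bounded with bounded derivatives, satisfies $g(0)=0$ and $g(s)=s$ on $[0,1]$, and has $g(s)\ge1$ for $s\ge1$ (e.g. let $g$ equal the identity on $[0,1]$, interpolate smoothly on $[1,3]$, and be constant equal to $2$ on $[3,\infty)$). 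Exactly as for the truncations $\chi_1,\chi_2$ in the proof of Lemma~\ref{L:Wachstum chi}, the function $G$ is bounded and uniformly continuous together with its first and second derivatives (all derivatives of $x\mapsto g(\vert x\vert^2)$ are products of derivatives of $g$ at $\vert x\vert^2$ with tensor powers of $x$, and these derivatives of $g$ are supported in $\{\vert x\vert\le\sqrt3\}$, so that boundedness of $D^3G$ yields uniform continuity of $D^2G$), so Lemma~\ref{L:Wachstum} applies to $G$.

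Then I would conclude: since $G\ge0$ and $G(0)=0$, Lemma~\ref{L:Wachstum} provides a constant $C_G>0$ with $\E(G(L(\theta)))=\vert\E(G(L(\theta)))-G(0)\vert\le\sum_{j=1}^d\theta_jC_G\le\sqrt d\,\vert\theta\vert\,C_G$ for every $\theta\in\mathbb R_+^d$. Combining this with the pointwise bound yields
$$g(\theta)=\E\vert f(L(\theta))\vert\le K\,\E(G(L(\theta)))\le K\sqrt d\,C_G\,\vert\theta\vert,$$
which is the assertion with $C:=K\sqrt d\,C_G$. The only point requiring a little care — and the nearest thing to an obstacle in this argument — is the construction of the dominating function $G$ in precisely the differentiability class demanded by Lemma~\ref{L:Wachstum}; this is routine and modelled on the analogous constructions in the proof of Lemma~\ref{L:Wachstum chi}. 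Everything else reduces to the two scalar inequalities $\vert e^{it}-1-it\vert\le t^2/2$ and $\vert e^{it}-1\vert\le2$ together with a single application of Lemma~\ref{L:Wachstum}.
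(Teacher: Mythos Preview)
Your proof is correct and takes a somewhat different route from the paper's. The paper does not dominate the integrand by a single smooth function; instead it replaces $\chi$ by a smooth truncation $\chi_1$ (the same one used in Lemma~\ref{L:Wachstum chi}), applies Lemma~\ref{L:Wachstum} to the function $x\mapsto\bigl|e^{i\<u\vert x\>}-1-i\<u\vert\chi_1(x)\>\bigr|$, and then controls the remainder $\E\bigl|\<u\vert\chi_1(L(\theta))-\chi(L(\theta))\>\bigr|$ by an estimate already obtained inside the proof of Lemma~\ref{L:Wachstum chi}. Your approach is more self-contained---it does not borrow an intermediate bound from another proof---and requires only one invocation of Lemma~\ref{L:Wachstum} rather than two; it also sidesteps the (mild) need to check that the modulus of the smoothed integrand lies in $UC_b^2$. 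The paper's approach, on the other hand, reuses the machinery of Lemma~\ref{L:Wachstum chi} and so treats the two estimates in parallel. One cosmetic point: you use the letter $g$ both for the function in the statement and for your auxiliary scalar cut-off, which you may want to rename.
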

\begin{proof}
 Let $\chi_1: H\rightarrow H$ such that $\chi_1$ is twice continuously differentiable, its support is contained in the centered ball of radius $2$, $\chi_1(x)=x$ for $x\in H$ with $\vert x\vert\leq 1$ and its first two derivatives vanish on its zeros except for the zero in $0$. In the proof of Lemma \ref{L:Wachstum chi} we have shown that there is a constant $C_2$ such that
$$ \E\vert \chi_1(L(\theta))-\chi(L(\theta))\vert \leq \vert \theta\vert C_2.$$
Let
$$f:H\rightarrow\mathbb C,x\mapsto \vert e^{i\<u\vert x\>}-1-i\<u\vert\chi_1(x)\>\vert.$$
Lemma \ref{L:Wachstum} yields $\E f(L(\theta))\leq \vert\theta\vert C_1$ for some constant $C_1>0$. Thus
$$g(\theta) \leq \E f(L(\theta))+\vert u\vert \E\vert \chi_1(L(\theta))-\chi(L(\theta))\vert \leq (C_1+\vert u\vert C_2)\vert\theta\vert$$
for any $\theta\in\mathbb R_+^d$.
\end{proof}

\bibliographystyle{amsplain}
\bibliography{bib}
\end{document}